\providecommand{\tabularnewline}{\\}
\numberwithin{equation}{section}
\numberwithin{figure}{section}
\newtheorem{theorem}{Theorem}[section]
\newtheorem{proposition}[theorem]{Proposition}\newtheorem{lemma}[theorem]{Lemma}\theoremstyle{definition}
\newtheorem{remark}[theorem]{Remark}\newtheorem{definition}[theorem]{Definition}\newtheorem{example}[theorem]{Example}\numberwithin{equation}{section}
\begin{document}
\title{Double Extensions of Multiplicative Restricted Hom-Lie Algebras}
\author{Dan Mao}
\address{D. Mao: School of Mathematics and Statistics, Northeast Normal University,
Changchun 130024, China}
\email{danmao678@nenu.edu.cn}
\author{Zeyu Hao}
\address{Z. Hao: School of Mathematics and Statistics, Northeast Normal University,
Changchun 130024, China}
\email{haozy\_ hb@163.com}
\author{Liangyun Chen$^{*}$}
\address{L. Chen: School of Mathematics and Statistics, Northeast Normal University,
Changchun 130024, China}
\email{chenly640@nenu.edu.cn}
\thanks{{*}Corresponding author.}
\thanks{\emph{MSC}(2020). 17A60, 17B61.}
\thanks{\emph{Key words and phrases}. Restricted Hom-Lie algebras, Double
extensions, Quadratic Hom-Lie algebras, Restricted derivations. }
\thanks{Supported by NSF of Jilin Province (No. YDZJ202201ZYTS589), NNSF of
China (Nos. 12271085, 12071405) and the Fundamental Research Funds for the Central Universities.}
\begin{abstract}
In this paper, we study the double extension of a restricted quadratic
Hom-Lie algebra $(V,[\cdot,\cdot]_{V},\alpha_{V},B_{V})$, which is
an enlargement of $V$ by means of a central extension and a restricted
derivation $\mathscr{D}$. In particular, we prove that the double
extension of a restricted quadratic Hom-Lie algebra $V$ with a $\mathscr{D}$-invariant
bilinear form $B_{V}$ is restricted. Conversely, any irreducible
restricted quadratic Hom-Lie algebra with nonzero center is proved
to be the double extension of another restricted quadratic Hom-Lie
algebra.

\tableofcontents{}
\end{abstract}

\maketitle

\section{Introduction}

The concept of a restricted Lie algebra is introduced by Jacobson
in \citep{Jac,Jac2}. Because of its important role in modular theory,
restrictedness theory attracts great attention \citep{BM2,BMPZ,DL,EF,Farn,GC,Hod,Sha,SLGW,WZ,SF,Hoc,Fel}.
Particularly, the notion of restricted Hom-Lie algebras is introduced
in \citep{GC} and then the restrictedness theory of Hom-Lie superalgebras
is studied in \citep{Sha}. In addition, \citep{BM2} proposes an
alternative definition of restricted Hom-Lie algebras to study the
queerification of a restricted Hom-Lie algebra in characteristic 2.

The notion of the double extension of Lie algebras over $\mathbb{R}$
is introduced by Medina and Revoy \citep{MR}, which is as follows:
Let $(V,[\cdot,\cdot]_{V})$ be a Lie algebra with a nondegenerate
invariant symmetric bilinear form $B_{V}$ and $\mathscr{D}\in\mathrm{Der}(V)$
a derivation. If $B_{V}$ is $\mathscr{D}$-invariant, i.e., $B_{V}(\mathscr{D}(u),v)+B_{V}(u,\mathscr{D}(v))=0$,
$\forall u,v\in V$, then there exists a quadratic algebra structure
on the vector space $\mathfrak{g}:=\mathscr{E}\oplus V\oplus\mathscr{E}^{*}$,
where $\mathscr{E}=\mathbb{R}e$, $\mathscr{E}^{*}=\mathbb{R}e^{*}$
for $e^{*}=\mathscr{D}$, the bracket $[\cdot,\cdot]:\mathfrak{g}\times\mathfrak{g}\rightarrow\mathfrak{g}$
is defined as follows (for any $u,v\in V$):
\[
[e,\mathfrak{g}]=0,\:[u,v]=[u,v]_{V}+B_{V}(\mathscr{D}(u),v)e,\:[e^{*},u]=\mathscr{D}(u)
\]
and the bilinear form $B:\mathfrak{g}\times\mathfrak{g}\rightarrow\mathbb{R}$
is defined as follows (for any $u,v\in V$):
\[
B(u,v)=B_{V}(u,v),\:B(u,e^{*})=B(u,e)=0,\:B(e^{*},e)=1,\:B(e,e)=B(e^{*},e^{*})=0.
\]
The quadratic Lie algebra $(\mathfrak{g},[\cdot,\cdot],B)$ is called
a double extension of $V$. The study of double extensions of Lie
algebras attracts great attention \citep{ARS,RS,RS2,JA,FS}. In \citep{BB},
the notion is generalized to quadratic Lie superalgebras, which initiates
the research of the double extension of Lie superalgebras \citep{ABB,ABB2,Be,Be2,BeBou,BLS,BB2}.
Moreover, (involutive) double extensions of Hom-Lie algebras are given
in \citep{BM}; double extensions of restricted Lie (super)algebras
are studied in \citep{BBH} and symplectic double extensions for restricted
quasi-Frobenius Lie superalgebras have been investigated in \citep{BEM}.

This paper generalizes the double extension theories of Hom-Lie algebras
\citep{BM} and restricted Lie algebras \citep{BBH} in the same time
and studies the double extension of restricted Hom-Lie algebras in
characteristic $p$. In particular, we give the double extension theorem
of a restricted Hom-Lie algebra and its converse theorem. We study
the adapted isomorphism between double extensions of the same multiplicative
quadratic Hom-Lie algebra and the equivalence class of $p$-structures
on double extensions.

The paper is organized as follows. In Sec. 2, some necessary definitions
and conclusions are recalled or introduced, including quadratic Hom-Lie
algebras, $p$-structures, restricted derivations and so on. In Sec.
3, we study the double extension of restricted Hom-Lie algebras for
$p=2$. We prove that the $2$-structure $[2]_{V}$ on the restricted
quadratic Hom-Lie algebra $(V,[\cdot,\cdot]_{V},\alpha_{V},B_{V})$
can be extended to its any double extension, i.e., the double extension
of a restricted quadratic Hom-Lie algebra $V$ with a $\mathscr{D}$-invariant
bilinear form $B_{V}$ is restricted (Theorem 3.4). We also give the
adapted isomorphism between double extensions of the same multiplicative
quadratic Hom-Lie algebra (Theorem 3.8) and the equivalence class
of $2$-structures on double extensions (Theorem 3.9). In Sec. 4,
we study the double extension of restricted Hom-Lie algebras for $p>2$.
We give the double extension of a multiplicative quadratic Hom-Lie
algebra (Theorem 4.1), $p$-structures on double extensions (Theorem
4.4) and the adapted isomorphism between double extensions of the
same multiplicative quadratic Hom-Lie algebra. We also study the equivalence
class of $p$-structures on double extensions (Theorem 4.13). In Sec.
5, we provide several examples of double extensions.

Throughout the paper, we study the finite-dimensional multiplicative
Hom-Lie algebras in prime characteristic $p$.

\section{Preliminaries}

\begin{definition} (see \citep{HLS}) A Hom-Lie algebra over $\mathbb{F}$
is a triple ($\mathfrak{g},[\cdot,\cdot],\alpha$) consisting of a
vector space $\mathfrak{g}$, a map $\alpha\in\mathrm{End}(\mathfrak{g})$
and a bilinear bracket $[\cdot,\cdot]:\mathfrak{g}\times\mathfrak{g}\rightarrow\mathfrak{g}$
satisfying:
\begin{align*}
[x,y] & =-[y,x],\:[\alpha(x),[y,z]]+\circlearrowleft(x,y,z)=0,\:\alpha([x,y])=[\alpha(x),\alpha(y)],
\end{align*}
for all $x,y,z\in\mathfrak{g}$.

\end{definition}

\begin{remark} (see \citep{BM2}) If $\mathrm{Char}(\mathbb{F})=2$,
then the first condition above is replaced by $[x,x]=0$, for all
$x\in\mathfrak{g}$.

\end{remark}

\begin{definition} (see \citep{BM}) Let ($\mathfrak{g},[\cdot,\cdot],\alpha$)
be a Hom-Lie algebra, a subspace $\mathfrak{h}\subset\mathfrak{g}$
is called an ideal of $\mathfrak{g}$ if $\alpha(x)\in\mathfrak{h}$
and $[x,y]\in\mathfrak{h}$ for all $x\in\mathfrak{h},y\in\mathfrak{g}$.

\end{definition}

\begin{definition} (see \citep{BM}) Let ($\mathfrak{g},[\cdot,\cdot],\alpha$)
be a Hom-Lie algebra. It is called involutive if $\alpha$ is an involution,
that is $\alpha^{2}=\mathrm{id}$.

\end{definition}

\begin{definition} (see \citep{She}) A representation of a Hom-Lie
algebra $(\mathfrak{g},[\cdot,\cdot]_{\mathfrak{g}},\alpha)$ is a
triple $(V,[\cdot,\cdot]_{V},\beta)$, where $V$ is a vector space,
$\beta\in\mathrm{gl}(V)$ and $[\cdot,\cdot]_{V}$ is the action of
$\mathfrak{g}$ on $V$ such that (for all $x,y\in\mathfrak{g}$ and
for all $v\in V$ ):

(1) $[\alpha(x),\beta(v)]_{V}=\beta([x,v]_{V})$,

(2) $[[x,y]_{\mathfrak{g}},\beta(v)]_{V}=[\alpha(x),[y,v]_{V}]_{V}+[\alpha(y),[x,v]_{V}]_{V}$
.

\end{definition}

If we put $\rho_{\beta}:=[\cdot,\cdot]_{V}$, then the above two equations
can be written as follows (for all $x,y\in\mathfrak{g}$):
\[
\rho_{\beta}(\alpha(x))\circ\beta=\beta\circ\rho_{\beta}(x),\:\rho_{\beta}([x,y]_{\mathfrak{g}})\circ\beta=\rho_{\beta}(\alpha(x))\rho_{\beta}(y)+\rho_{\beta}(\alpha(y))\rho_{\beta}(x).
\]

\begin{definition} (see \citep{BM}) Let ($\mathfrak{g},[\cdot,\cdot],\alpha$)
be a Hom-Lie algebra over $\mathbb{F}$.

(1) A symmetric nondegenerate bilinear form $B:\mathfrak{g}\times\mathfrak{g}\rightarrow\mathbb{F}$
is said to be invariant if it satisfies

\[
B([x,y],z)=B(x,[y,z]),\:\mathrm{for}\:\mathrm{any}\:x,y,z\in\mathfrak{g}.
\]

(2) The quadruple $(\mathfrak{g},[\cdot,\cdot],\alpha,B)$ is called
a quadratic Hom-Lie algebra if the symmetric nondegenerate invariant
bilinear form $B:\mathfrak{g}\times\mathfrak{g}\rightarrow\mathbb{F}$
satisfies
\begin{equation}
B(\alpha(x),y)=B(x,\alpha(y)),\:\mathrm{for}\:\mathrm{any}\:x,y\in\mathfrak{g}.
\end{equation}

\end{definition}

\emph{Notation}. Let $(\mathfrak{g},[\cdot,\cdot],\alpha,B)$ be a
quadratic Hom-Lie algebra. We denote by $\mathrm{End}_{a}(\mathfrak{g},B)$
the set of skew-symmetric endomorphisms of $\mathfrak{g}$ with respect
of $B$, that is endomorphisms $\phi:\mathfrak{g}\rightarrow\mathfrak{g}$
such that $B(\phi(x),y)=-B(x,\phi(y)),\:\mathrm{for}\:\mathrm{any}\:x,y\in\mathfrak{g}$.
Similarly, We denote by $\mathrm{End}_{s}(\mathfrak{g},B)$ the set
of symmetric endomorphisms of $\mathfrak{g}$ with respect of $B$,
that is endomorphisms $\phi:\mathfrak{g}\rightarrow\mathfrak{g}$
such that $B(\phi(x),y)=B(x,\phi(y)),\:\mathrm{for}\:\mathrm{any}\:x,y\in\mathfrak{g}$.

\begin{definition} Let $(\mathfrak{g},[\cdot,\cdot],\alpha,B)$ be
a quadratic Hom-Lie algebra.

(1) An ideal $\mathfrak{h}$ of $\mathfrak{g}$ is said to be nondegenerate
if $B|_{\mathfrak{h}\times\mathfrak{h}}$ is nondegenerate.

(2) The quadratic Hom--Lie algebra is said to be irreducible (or
$B$-irreducible) if $\mathfrak{g}$ does not contain any nondegenerate
ideal $\mathfrak{h}$ such that $\mathfrak{h}\neq\{0\}$ and $\mathfrak{h}\neq\mathfrak{g}$.

(3) Let $\mathfrak{h}$ be an ideal of $\mathfrak{g}$. Denote by
$\mathfrak{h}^{\perp}$ the orthogonal space with respect to $B$
of $\mathfrak{h}$, which is defined by
\[
\mathfrak{h}^{\perp}=\{x\in\mathfrak{g}:B(x,y)=0,\:\forall y\in\mathfrak{h}\}.
\]

\end{definition}

Denote by $\mathfrak{z}(\mathfrak{g})$ the center of Hom-Lie algebra
($\mathfrak{g},[\cdot,\cdot],\alpha$), that is $\mathfrak{z}(\mathfrak{g})=\{x\in\mathfrak{g}:[x,y]=0,\forall y\in\mathfrak{g}\}$.
The following lemma is proved over an algebraically closed field of
characteristic zero \citep{BM}, which still holds over an algebraically
closed field of characteristic $p$.

\begin{lemma} Let $(\mathfrak{g},[\cdot,\cdot],\alpha,B)$ be a multiplicative
quadratic Hom-Lie algebra in characteristic $p$. Then the center
$\mathfrak{z}(\mathfrak{g})$ is an ideal of $\mathfrak{g}$.

\end{lemma}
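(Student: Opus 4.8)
The plan is to verify directly that $\mathfrak{z}(\mathfrak{g})$ satisfies the two requirements in the definition of an ideal, namely stability under the bracket with arbitrary elements and stability under $\alpha$. That $\mathfrak{z}(\mathfrak{g})$ is a linear subspace is immediate from the bilinearity of $[\cdot,\cdot]$. The bracket condition is also trivial: if $x\in\mathfrak{z}(\mathfrak{g})$ and $y\in\mathfrak{g}$, then $[x,y]=0$ by definition of the center, and $0\in\mathfrak{z}(\mathfrak{g})$. So all the content of the lemma is concentrated in the single assertion that $\alpha(\mathfrak{z}(\mathfrak{g}))\subseteq\mathfrak{z}(\mathfrak{g})$.

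To prove $\alpha$-stability I would fix $x\in\mathfrak{z}(\mathfrak{g})$ and show $[\alpha(x),z]=0$ for every $z\in\mathfrak{g}$. Since $B$ is nondegenerate, it suffices to show $B([\alpha(x),z],w)=0$ for all $w\in\mathfrak{g}$. The idea is to push $\alpha$ off the central element and then relocate the bracket so that centrality of $x$ can be invoked. Concretely, using first the invariance of $B$, then the compatibility $B(\alpha(u),v)=B(u,\alpha(v))$ of Definition 2.6(2), then the multiplicativity $\alpha([z,w])=[\alpha(z),\alpha(w)]$, and finally invariance again, one computes
\[
B([\alpha(x),z],w)=B(\alpha(x),[z,w])=B(x,\alpha([z,w]))=B(x,[\alpha(z),\alpha(w)])=B([x,\alpha(z)],\alpha(w))=0,
\]
where the last equality holds because $[x,\alpha(z)]=0$ as $x$ is central. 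Nondegeneracy of $B$ then yields $[\alpha(x),z]=0$, so $\alpha(x)\in\mathfrak{z}(\mathfrak{g})$, completing the argument.

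The main (and really only) obstacle is recognizing that $\alpha$ need not preserve the center of a general Hom-Lie algebra, so the quadratic structure is genuinely needed: the conclusion rests on simultaneously exploiting the three properties of $B$ (invariance, the $\alpha$-symmetry condition, and nondegeneracy) together with the multiplicativity of $\alpha$ to transport centrality from $x$ to $\alpha(x)$. I would emphasize that this computation is purely algebraic and uses no division or characteristic-specific facts, which is exactly why the result—established in \citep{BM} over an algebraically closed field of characteristic zero—carries over verbatim to characteristic $p$.
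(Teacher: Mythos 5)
Your proof is correct and follows essentially the same route as the paper's: the paper gives no argument of its own, deferring to \citep{BM}, and the proof there is exactly your chain of identities (invariance of $B$, the compatibility $B(\alpha(u),v)=B(u,\alpha(v))$, multiplicativity of $\alpha$, invariance again, then nondegeneracy) applied to show $\alpha(\mathfrak{z}(\mathfrak{g}))\subseteq\mathfrak{z}(\mathfrak{g})$. Your closing remark that the computation uses no division or characteristic-specific facts is precisely the justification, left implicit in the paper, for the claim that the characteristic-zero result of \citep{BM} carries over verbatim to characteristic $p$ (including $p=2$, since antisymmetry is never invoked).
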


\begin{definition} (see \citep{SF}) Let $\mathfrak{g}$ be a Lie
algebra over $\mathbb{F}$. A mapping $[p]:\mathfrak{g}\rightarrow\mathfrak{g},x\mapsto x^{[p]}$
is called a $p$-mapping if

(1) $\mathrm{ad}x^{[p]}=(\mathrm{ad}x)^{p}$, for all $x\in\mathfrak{g}.$

(2) $(kx)^{[p]}=k^{p}x^{[p]}$, for all $x\in\mathfrak{g}$ and all
$k\in\mathbb{F}$.

(3) $(x+y)^{[p]}=x^{[p]}+y^{[p]}+\sum_{i=1}^{p-1}s_{i}(x,y)$, where
$(\mathrm{ad}(x\otimes X+y\otimes1))^{p-1}(x\otimes1)=\sum_{i=1}^{p-1}is_{i}(x,y)\otimes X^{i-1}$,
in $\mathfrak{g}\otimes_{\mathbb{F}}\mathbb{F}[X]$, for all $x,y\in\mathfrak{g}$.
The pair $(\mathfrak{g},[p])$ is referred to as a restricted Lie
algebra.

\end{definition}

The notion of restricted Hom-Lie algebras introduced by Bouarroudj
and Makhlouf \citep{BM2} will be used in this paper, which is as
follows:

\begin{definition} (see \citep{BM2}) Let $\mathfrak{g}$ be a multiplicative
Hom-Lie algebra in characteristic $p$ with a twist $\alpha$. A mapping
$[p]:\mathfrak{g}\rightarrow\mathfrak{g}$ , $x\mapsto x^{[p]}$ is
called a $p$-structure of $\mathfrak{g}$ and $\mathfrak{g}$ is
said to be restricted if

(R1) $\mathrm{ad}(x^{[p]})\circ\alpha^{p-1}=\mathrm{ad}(\alpha^{p-1}(x))\circ\mathrm{ad}(\alpha^{p-2}(x))\circ\cdot\cdot\cdot\circ\mathrm{ad}(x)$,
for all $x\in\mathfrak{g}$;

(R2) $(kx)^{[p]}=k^{p}x^{[p]}$, for all $x\in\mathfrak{g}$ and for
all $k\in\mathbb{F}$;

(R3) $(x+y)^{[p]}=x^{[p]}+y^{[p]}+\sum_{i=1}^{p-1}s_{i}^{\mathfrak{g}}(x,y)$,
where the $s_{i}^{\mathfrak{g}}(x,y)$ can be obtained from $\mathrm{ad}(\alpha^{p-2}(kx+y))\circ\mathrm{ad}(\alpha^{p-3}(kx+y))\circ\cdot\cdot\cdot\circ\mathrm{ad}(kx+y)(x)=\sum_{i=1}^{p-1}is_{i}^{\mathfrak{g}}(x,y)k^{i-1}$.

\end{definition}

\begin{lemma} (see \citep{MGC}) Let $(e_{j})_{j\in J}$ be a basis
of the Hom-Lie algebra $(\mathfrak{g},[\cdot,\cdot],\alpha)$ such
that there are $y_{j}\in\mathfrak{g}$ with
\[
\mathrm{ad}(\alpha^{p-1}(e_{j}))\circ\mathrm{ad}(\alpha^{p-2}(e_{j}))\circ...\circ\mathrm{ad}(e_{j})=\mathrm{ad}(y_{j})\circ\alpha^{p-1}.
\]
Then there exists exactly one $p$-structure $[p]:\mathfrak{g}\rightarrow\mathfrak{g}$
such that $e_{j}^{[p]}=y_{j},$ for all $j\in J$.

\end{lemma}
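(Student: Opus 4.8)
The plan is to prove the Hom-Lie analogue of Jacobson's classical theorem on extending a $p$-mapping from prescribed values on a basis, treating uniqueness and existence separately.

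First I would dispose of uniqueness, which is the soft half. Axioms (R2) and (R3) together express $x^{[p]}$, for an arbitrary $x=\sum_{j}\lambda_{j}e_{j}$ of finite support, as a universal expression in the prescribed values $y_{j}=e_{j}^{[p]}$ and in the correction terms $s_{i}^{\mathfrak{g}}$, the latter depending only on the bracket and on $\alpha$ and not on the chosen $p$-structure. Hence, peeling off one basis vector at a time with (R3) and absorbing scalars with (R2), a routine induction on the size of the support shows that any two $p$-structures agreeing on $(e_{j})_{j\in J}$ must coincide on all of $\mathfrak{g}$. In particular there is at most one candidate, and this also pins down the formula by which existence must be established.

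For existence I would introduce the Hom-adjoint $p$-operator
\[
\Phi_{x}:=\mathrm{ad}(\alpha^{p-1}(x))\circ\mathrm{ad}(\alpha^{p-2}(x))\circ\cdots\circ\mathrm{ad}(x)\in\mathrm{End}(\mathfrak{g}),
\]
so that the hypothesis reads $\Phi_{e_{j}}=\mathrm{ad}(y_{j})\circ\alpha^{p-1}$ and condition (R1) becomes $\mathrm{ad}(x^{[p]})\circ\alpha^{p-1}=\Phi_{x}$. The heart of the argument is a Hom-version of the Jacobson identities for $\Phi$, namely $\Phi_{kx}=k^{p}\Phi_{x}$ together with
\[
\Phi_{x+y}=\Phi_{x}+\Phi_{y}+\sum_{i=1}^{p-1}\mathrm{ad}\big(s_{i}^{\mathfrak{g}}(x,y)\big)\circ\alpha^{p-1}.
\]
The scalar identity is immediate from $p$-homogeneity of the $p$-fold composition. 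The additive identity I would obtain by expanding inside the associative algebra $\mathrm{End}(\mathfrak{g})$ and exploiting the multiplicativity relation $\mathrm{ad}(\alpha(x))\circ\alpha=\alpha\circ\mathrm{ad}(x)$ (a direct consequence of $\alpha([x,y])=[\alpha(x),\alpha(y)]$), which reorganizes the classical associative Jacobson expansion into the twisted composition defining $\Phi$; indeed the polynomials $s_{i}^{\mathfrak{g}}$ are defined in (R3) precisely so that each correction term lands in $\mathrm{ad}(\mathfrak{g})\circ\alpha^{p-1}$, and they are iterated brackets, hence genuine elements of $\mathfrak{g}$.

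With this in hand I would define $x^{[p]}$ for $x=\sum_{j}\lambda_{j}e_{j}$ by the expression forced by (R2)--(R3) (so that $e_{j}^{[p]}=y_{j}$), check it is independent of the order in which basis vectors are added, and finally verify (R1) by comparing $\mathrm{ad}(x^{[p]})\circ\alpha^{p-1}$ with $\Phi_{x}$ through the Hom-Jacobson identity above; axioms (R2) and (R3) then hold by construction. The main obstacle is exactly this additive Hom-Jacobson identity together with the attendant well-definedness: one must show that $\Phi_{x+y}-\Phi_{x}-\Phi_{y}$ genuinely lies in $\mathrm{ad}(\mathfrak{g})\circ\alpha^{p-1}$ and that the corrections assemble consistently across different decompositions of $x$. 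The subtle point throughout is that $\alpha$ need not be invertible, so one cannot use the clean conjugation $\mathrm{ad}(\alpha^{k}(x))=\alpha^{k}\circ\mathrm{ad}(x)\circ\alpha^{-k}$; all the bookkeeping must instead be carried out with the one-sided intertwining $\mathrm{ad}(\alpha^{k}(x))\circ\alpha^{k}=\alpha^{k}\circ\mathrm{ad}(x)$, which is where the real care is needed.
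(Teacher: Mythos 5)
First, a point of comparison that is impossible to make: the paper does not prove this lemma at all. It is quoted verbatim from the companion reference \citep{MGC}, so there is no proof in the paper to measure your argument against; your proposal has to stand on its own. On its own terms, your overall strategy is the natural (and essentially the only) one — the Hom-analogue of Jacobson's basis theorem — and your uniqueness half is sound: by (R2) and (R3), $x^{[p]}$ for any finitely supported $x=\sum_j\lambda_je_j$ is a universal expression in the prescribed $y_j$ and in the $s_i^{\mathfrak g}$, which depend only on the bracket and $\alpha$, so induction on the support gives uniqueness.

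The genuine gap is in the existence half, precisely at what you call the heart of the argument. The identity
\[
\Phi_{x+y}=\Phi_{x}+\Phi_{y}+\sum_{i=1}^{p-1}\mathrm{ad}\bigl(s_{i}^{\mathfrak g}(x,y)\bigr)\circ\alpha^{p-1}
\]
is asserted, not proven, and the justification you offer does not work as stated. The operator $\Phi_{x}=\mathrm{ad}(\alpha^{p-1}(x))\circ\cdots\circ\mathrm{ad}(x)$ is not the $p$-th power of any single element of the associative algebra $\mathrm{End}(\mathfrak g)$ but a product of $p$ \emph{distinct} operators, so the classical associative Jacobson expansion simply does not apply to it; and $\mathrm{ad}$ is not a Lie algebra homomorphism here — one only has the Hom-Jacobi relation $\mathrm{ad}([x,y])\circ\alpha=\mathrm{ad}(\alpha(x))\circ\mathrm{ad}(y)-\mathrm{ad}(\alpha(y))\circ\mathrm{ad}(x)$ — so the classical mechanism by which correction terms become $\mathrm{ad}$'s of bracket expressions is unavailable. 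Your claim that the $s_i^{\mathfrak g}$ ``are defined in (R3) precisely so that each correction term lands in $\mathrm{ad}(\mathfrak g)\circ\alpha^{p-1}$'' is circular: (R3) defines the $s_i^{\mathfrak g}$ as coefficients of an expansion \emph{inside} $\mathfrak g$, and the entire content of the existence statement is that these particular elements control the deviation of $\Phi$ from additivity. For $p=2$ this does follow in one line from Hom-Jacobi, since in characteristic $2$ one gets $\Phi_{x+y}-\Phi_{x}-\Phi_{y}=\mathrm{ad}(\alpha(x))\mathrm{ad}(y)+\mathrm{ad}(\alpha(y))\mathrm{ad}(x)=\mathrm{ad}([y,x])\circ\alpha=\mathrm{ad}(s_{1}^{\mathfrak g}(x,y))\circ\alpha$; but for general $p$ one needs a genuine induction on the length of the twisted compositions, of the same iterative type the paper later develops in Lemmas 4.7--4.11 for expansions $\Phi_{i}^{l}$. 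The same missing combinatorics is what would guarantee the well-definedness of your recursive definition across different decompositions of $x$ (symmetry/cocycle identities for the $s_i^{\mathfrak g}$), which you flag as a subtlety but never resolve. Without these two ingredients the proposal is a correct plan, not a proof.
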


\begin{definition} An ideal $\mathfrak{h}$ of the restricted Hom-Lie
algebra $(\mathfrak{g},[\cdot,\cdot],\alpha,[p])$ in characteristic
$p$ is called a $p$-ideal if $x^{[p]}\in\mathfrak{h}$ for all $x\in\mathfrak{h}$.

\end{definition}

\begin{definition} (see \citep{GC}) Let $(L,[\cdot,\cdot]_{L},\alpha,[p]_{1})$
and $(\widetilde{L},[\cdot,\cdot]_{\widetilde{L}},\widetilde{\alpha},[p]_{2})$
be restricted Hom-Lie algebras. A homomorphism $\pi:L\rightarrow\widetilde{L}$
is said to be restricted if

\[
\pi(x^{[p]_{1}})=\pi(x)^{[p]_{2}},\:\mathrm{for\:every\:}x\in L.
\]

\end{definition}

Similarly, an isomorphism $\pi:L\rightarrow\widetilde{L}$ is said
to be restricted if $\pi(x^{[p]_{1}})=\pi(x)^{[p]_{2}}$, for every
$x\in L$.

\begin{definition} (see \citep{She}) Let ($\mathfrak{g},[\cdot,\cdot],\alpha$)
be a multiplicative Hom-Lie algebra. For any nonnegative integer $k$,
a linear map $\mathscr{D}:\mathfrak{g}\rightarrow\mathfrak{g}$ is
called an $\alpha^{k}$-derivation of $\mathfrak{g}$ if
\[
\mathscr{D}\circ\alpha=\alpha\circ\mathscr{D}
\]

and
\[
\mathscr{D}([x,y])=[\mathscr{D}(x),\alpha^{k}(y)]+[\alpha^{k}(x),\mathscr{D}(y)],\:\forall x,y\in\mathfrak{g}.
\]

\end{definition}

Let $\mathrm{Der}_{\alpha^{k}}(\mathfrak{g})$ denote the set of $\alpha^{k}$-derivations
of the multiplicative Hom-Lie algebra ($\mathfrak{g},[\cdot,\cdot],\alpha$).
Then $\mathrm{Der}(\mathfrak{g})=\oplus_{k\geq0}\mathrm{Der}_{\alpha^{k}}(\mathfrak{g})$
is the derivation algebra of $\mathfrak{g}$.

\begin{definition} (see \citep{BBH}) Let ($\mathfrak{g},[\cdot,\cdot],\alpha,B$)
be a quadratic Hom-Lie algebra over $\mathbb{F}$ and $\mathscr{D}\in\mathrm{Der}(\mathfrak{g})$.
The bilinear form $B$ is said to be $\mathscr{D}$-invariant if for
any $x,y\in\mathfrak{g}$,

(1) $p=2$, $B(x,\mathscr{D}(x))=0$.

(2) $p>2$, $B(\mathscr{D}(x),y)+B(x,\mathscr{D}(y))=0$.

\end{definition}

\begin{definition} Let ($\mathfrak{g},[\cdot,\cdot],\alpha$) be
a restricted multiplicative Hom-Lie algebra and $[p]_{\mathfrak{g}}$
a $p$-structure on $\mathfrak{g}$. $\mathscr{D}\in\mathrm{Der}(\mathfrak{g})$
is said to be restricted if
\[
\mathscr{D}(x^{[p]_{\mathfrak{g}}})=\mathrm{ad}(\alpha^{p-1}(x))\circ\mathrm{ad}(\alpha^{p-2}(x))\circ\cdot\cdot\cdot\circ\mathrm{ad}(\alpha(x))(\mathscr{D}(x)),\:\forall x\in\mathfrak{g}.
\]

\end{definition}

Denote by $\mathrm{Der}^{p}(\mathfrak{g})$ the set of all restricted
derivations of $\mathfrak{g}$.

\begin{definition} Let ($\mathfrak{g},[\cdot,\cdot],\alpha$) be
a restricted multiplicative Hom-Lie algebra in characteristic $p$.
$\mathscr{D}\in\mathrm{Der}^{p}(\mathfrak{g})$ is said to have $p$-property
if there exists $\xi\in\mathbb{F}$ and $a_{0}\in\mathfrak{g}$ such
that

\begin{equation}
\mathscr{D}^{p}=\xi\mathscr{D}\circ\alpha^{p-1}+\mathrm{ad}(a_{0})\circ\alpha^{p-1},\:\mathscr{D}(a_{0})=0.
\end{equation}

\end{definition}

As is known, in the case of (restricted) Lie algebras, double extension
is closely related to (restricted) cohomology (see \citep{BBH,RS2}).
Details are as follows:

Denote by $\mathfrak{h}=\mathscr{E}^{*}\oplus\mathfrak{g}\oplus\mathscr{E}$
the double extension of a Lie algebra $\mathfrak{g}$, which involves
three ingredients:

(1) an outer derivation $\mathscr{D}$ of $\mathfrak{g}$,

(2) a nondegenerade symmetric invariant bilinear form $B_{\mathfrak{g}}$
which is $\mathscr{D}$-invariant,

(3) the central extension $\mathfrak{g}_{e}$ of $\mathfrak{g}$ given
by $(x,y)\mapsto B_{\mathfrak{g}}(\mathscr{D}(x),y)e=\theta(x,y)e$,
$\forall x,y\in\mathfrak{g}$, where $\theta\in(\wedge^{2}\mathfrak{g})^{*}$
is a 2-cocycle.

It implies that the central extension $\mathfrak{g}_{e}$ of $\mathfrak{g}$
is determined by a 2-cocycle $\theta\in(\wedge^{2}\mathfrak{g})^{*}$.

On the other hand, let $\mathfrak{out}(\mathfrak{g})=\mathrm{Der}(\mathfrak{g})/\mathrm{ad_{\mathfrak{g}}}$
denote the space of outer derivations of $\mathfrak{g}$, then $\mathfrak{out}(\mathfrak{g})\simeq H^{1}(\mathfrak{g};\mathfrak{g})$,
where $H^{1}(\mathfrak{g};\mathfrak{g})$ is the space of $1^{th}$
cohomology classes of $\mathfrak{g}$. Furthermore, suppose that $\mathfrak{g}$
is a restricted Lie algebra. Then the double extension of $\mathfrak{g}$
involves another ingredient, that is, a $p$-mapping $[p]$ on $\mathfrak{g}$.
Now, the outer derivation $\mathscr{D}$ is required to be restricted
with respect to $[p]$, that is, $\mathscr{D}\in\mathfrak{out}^{p}(g)=\mathrm{Der}^{p}(\mathfrak{g})/\mathrm{ad_{\mathfrak{g}}}$.
In fact, $\mathfrak{out}^{p}(\mathfrak{g})\simeq H_{res}^{1}(\mathfrak{g};\mathfrak{g})$,
where $H_{res}^{1}(\mathfrak{g};\mathfrak{g})$ is the space of restricted
$1^{th}$ cohomology classes of $\mathfrak{g}$. Particularly, if
$\mathfrak{g}$ is simple, then $H^{1}(\mathfrak{g};\mathfrak{g})\simeq H_{res}^{1}(\mathfrak{g};\mathfrak{g})$.

Consequently, in the study of double extensions of a given restricted
Lie algebra $\mathfrak{g}$, the restricted outer derivations of $\mathfrak{g}$
can be captured by computing $H_{res}^{1}(\mathfrak{g};\mathfrak{g})$
or $H^{1}(\mathfrak{g};\mathfrak{g})$ (if $\mathfrak{g}$ is simple).

However, in the case of Hom-Lie algebra, the cohomology structures
on Hom-Lie algebras in characteristic $0$ defined in \citep{MS}
and \citep{AEM} are not fit for the study of double extensions of
Hom-Lie algebras.

\section{Double Extensions of Restricted Hom-Lie Algebras for $p=2$}

In this section, $\mathbb{F}$ is an algebraically closed field of
characteristic $2$ and $(V,[\cdot,\cdot]_{V},\alpha_{V})$ is a Hom-Lie
algebra over $\mathbb{F}$. The following theorem characterizes the
double extension of a quadratic Hom-Lie algebra $(V,[\cdot,\cdot]_{V},\alpha_{V},B_{V})$
by means of a central extension and a derivation.

\begin{theorem} (Double Extension Theorem) Let $(V,[\cdot,\cdot]_{V},\alpha_{V},B_{V})$
be an involutive quadratic Hom-Lie algebra, $\mathscr{D}\in\mathrm{Der}_{\alpha_{V}}(V)$
which makes $B_{V}$ $\mathscr{D}$-invariant, $\mathscr{E}=\mathrm{Span}\{e\}$
and $\mathscr{E}^{*}=\mathrm{Span}\{e^{*}\}$, where $e^{*}=\mathscr{D}$.

(1) Suppose that there exist $x_{0}\in V,\lambda\in\mathbb{F}$ such
that
\begin{align}
\lambda\mathscr{D}+\mathrm{ad}_{V}(x_{0}) & =\mathscr{D},\\
\alpha_{V}(\mathscr{D}(x_{0})) & =\mathscr{D}(x_{0}),\\
\alpha_{V}\circ\mathscr{D}^{2}+\mathscr{D}^{2}\circ\alpha_{V} & =\mathrm{ad}_{V}(\mathscr{D}(x_{0})),
\end{align}
where $\mathrm{ad}_{V}(x):=[x,\cdot]_{V}$, for $x\in V$, then there
exists a multiplicative quadratic Hom-Lie algebra structure on $L:=\mathscr{E}^{*}\oplus V\oplus\mathscr{E}$,
where the bracket $[\cdot,\cdot]:L\times L\rightarrow L$ is defined
by
\begin{align*}
[x,y] & =[x,y]_{V}+B_{V}(\mathscr{D}(x),y)e,\:\forall x,y\in V,\\{}
[e^{*},x] & =\mathscr{D}(x),\:\forall x\in V,\\{}
[e,z] & =0,\:\forall z\in L,
\end{align*}
and the linear map $\alpha:L\rightarrow L$ is defined as follows:
\[
\alpha(x)=\alpha_{V}(x)+B_{V}(x_{0},x)e,\:\forall x\in V,\:\alpha(e^{*})=\lambda e^{*}+x_{0}+\lambda_{0}e,\:\alpha(e)=\lambda e,
\]
where $\lambda_{0}\in\mathbb{F}$.

The symmetric nondegenerate invariant bilinear form $B:L\times L\rightarrow\mathbb{F}$
is defined by
\begin{align*}
B(x,y) & =B_{V}(x,y),\:B(x,e^{*})=B(x,e)=0,\:\forall x,y\in V,\\
B(e^{*},e) & =1,\:B(e,e)=0.
\end{align*}

(2) The twist map $\alpha$ is invertible if and only if $\lambda\neq0$.
Furthermore, $\alpha$ is an involution if and only if

\begin{equation}
\lambda^{2}=1,\:\alpha_{V}(x_{0})=\lambda x_{0},\:B_{V}(x_{0},x_{0})=0.
\end{equation}

\end{theorem}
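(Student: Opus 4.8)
The plan is to treat $L=\mathscr{E}^{*}\oplus V\oplus\mathscr{E}$ as a filtered object and to verify every axiom on the three types of generators $e^{*}$, $x\in V$, and $e$, exploiting that $e$ is central and $\alpha(e)=\lambda e$, so that any identity involving $e$ collapses at once. For part (1) I would check, in order: (a) that the bracket is alternating, i.e. $[z,z]=0$ for all $z\in L$ (Remark 2.2); (b) the Hom-Jacobi identity; (c) multiplicativity $\alpha([u,v])=[\alpha(u),\alpha(v)]$; and (d) that $B$ is symmetric, nondegenerate, invariant, and satisfies $B(\alpha(u),v)=B(u,\alpha(v))$. Step (a) is the quickest: $[x,x]=[x,x]_{V}+B_{V}(\mathscr{D}(x),x)e$, where $[x,x]_{V}=0$ and $B_{V}(\mathscr{D}(x),x)=0$ is exactly the $\mathscr{D}$-invariance of $B_{V}$ for $p=2$; polarizing the latter also yields $B_{V}(\mathscr{D}(u),v)=B_{V}(u,\mathscr{D}(v))$, i.e. $\mathscr{D}$ is $B_{V}$-symmetric, which I will use repeatedly. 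For (d), nondegeneracy follows because $B_{V}$ is nondegenerate on $V$, the pairing $B(e^{*},e)=1$ makes the plane $\mathscr{E}^{*}\oplus\mathscr{E}$ nondegenerate, and $V$ is $B$-orthogonal to this plane; invariance and the $\alpha$-symmetry of $B$ then reduce, case by case, to the corresponding properties of $B_{V}$ together with $B(e^{*},e)=1$.

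The substantive content is in (b) and (c). For multiplicativity I would split into the blocks $V\times V$, $e^{*}\times V$, and those containing $e$. On $V\times V$ the $V$-component matches by multiplicativity of $\alpha_{V}$, and the $e$-component reduces to $B_{V}(x_{0},[x,y]_{V})=(1+\lambda)B_{V}(\mathscr{D}(x),y)$, which follows from invariance of $B_{V}$ together with condition $(3.1)$ in the form $\mathrm{ad}_{V}(x_{0})=(1+\lambda)\mathscr{D}$. On $e^{*}\times V$ the $V$-component closes using $(3.1)$ and $1+2\lambda=1$, while the $e$-component demands $B_{V}(x_{0},\mathscr{D}(x))=B_{V}(\mathscr{D}(x_{0}),\alpha_{V}(x))$, which is exactly where condition $(3.2)$, namely $\alpha_{V}(\mathscr{D}(x_{0}))=\mathscr{D}(x_{0})$, and the $B_{V}$-symmetry of $\mathscr{D}$ are consumed. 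For the Hom-Jacobi identity I would organize by the number of $e^{*}$'s in the triple: the all-$V$ case splits into the Hom-Jacobi identity of $V$ (the $V$-part) and a cocycle identity $\sum_{\circlearrowleft}B_{V}(\alpha_{V}\mathscr{D}(x),[y,z]_{V})=0$ (the $e$-part), which I would prove by sliding $\alpha_{V}$ and $\mathscr{D}$ across $B_{V}$ and invoking invariance, noting that in characteristic $2$ the six resulting terms cancel in pairs. I expect the case with two copies of $e^{*}$ to be where the second-order operator $\mathscr{D}^{2}$ and the term $\mathrm{ad}_{V}(\mathscr{D}(x_{0}))$ appear, so that condition $(3.3)$ is the relevant hypothesis; here the characteristic-$2$ simplification is decisive, since $\alpha_{V}\mathscr{D}=\mathscr{D}\alpha_{V}$ forces $\alpha_{V}\circ\mathscr{D}^{2}+\mathscr{D}^{2}\circ\alpha_{V}=2\,\alpha_{V}\mathscr{D}^{2}=0$ and collapses $(3.3)$ to $\mathrm{ad}_{V}(\mathscr{D}(x_{0}))=0$.

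For part (2), the invertibility claim is immediate from the block-triangular form of $\alpha$ relative to the ordered decomposition $\mathscr{E}^{*}\oplus V\oplus\mathscr{E}$: its diagonal blocks are multiplication by $\lambda$ on $\mathscr{E}^{*}$ and on $\mathscr{E}$ and $\alpha_{V}$ on $V$, and since $\alpha_{V}$ is an involution, $\alpha$ is invertible precisely when $\lambda\neq0$. For the involution criterion I would simply compute $\alpha^{2}$ on each generator: $\alpha^{2}(e)=\lambda^{2}e$ forces $\lambda^{2}=1$; $\alpha^{2}(x)=x+B_{V}(\alpha_{V}(x_{0})+\lambda x_{0},x)\,e$ (using $\alpha_{V}^{2}=\mathrm{id}$), so $\alpha^{2}|_{V}=\mathrm{id}$ is equivalent, by nondegeneracy of $B_{V}$, to $\alpha_{V}(x_{0})=\lambda x_{0}$; and $\alpha^{2}(e^{*})=\lambda^{2}e^{*}+(\alpha_{V}(x_{0})+\lambda x_{0})+B_{V}(x_{0},x_{0})e$, which adds the single new condition $B_{V}(x_{0},x_{0})=0$. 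Together these are exactly the three relations in $(3.4)$.

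The step I expect to be most delicate is the $e$-component of the Hom-Jacobi identity: unlike the $V$-components, which inherit the structure of $V$ directly, these are scalar cocycle identities obtained by sliding $\alpha_{V}$, $\mathscr{D}$, and $\mathscr{D}^{2}$ through $B_{V}$ via invariance, $\alpha_{V}$-symmetry, and $B_{V}$-symmetry of $\mathscr{D}$, and then matching against $(3.1)$--$(3.3)$. The main bookkeeping risk is keeping the coefficients correct; the saving grace is that characteristic $2$ repeatedly sends cross terms of the form $2(\ast)$ to zero and identifies $1-\lambda$ with $1+\lambda$, so that most would-be obstructions degenerate and only the relations $(3.1)$--$(3.3)$ survive.
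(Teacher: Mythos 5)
Your overall route coincides with the paper's: verify that the bracket is alternating, then the Hom--Jacobi identity, then multiplicativity, then the properties of $B$, all generator-by-generator, with every case containing $e$ collapsing by centrality and $\alpha(e)=\lambda e$. Your treatment of the alternating property (including the polarization remark that $\mathscr{D}$-invariance for $p=2$ makes $\mathscr{D}$ symmetric with respect to $B_{V}$), of multiplicativity (with $(3.1)$ consumed in the $V\times V$ block and $(3.2)$ in the $e^{*}\times V$ block --- exactly as in the paper), of the all-$V$ Jacobi case, and of part (2) are all correct; indeed your block-triangular argument for invertibility is cleaner than the paper's kernel computation.

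There is, however, a genuine gap in your roadmap for the Hom--Jacobi identity. You organize by the number of $e^{*}$'s in the triple but discuss only the cases with zero and two copies of $e^{*}$, and you locate condition $(3.3)$ in the two-$e^{*}$ case. That case is vacuous: the cyclic sum is $[\alpha(e^{*}),[e^{*},g]]+[\alpha(e^{*}),[g,e^{*}]]+[\alpha(g),[e^{*},e^{*}]]=2[\alpha(e^{*}),[e^{*},g]]=0$ in characteristic $2$, with no hypothesis needed (the paper's subcase (ii)). The case you never address --- exactly one $e^{*}$ together with $f,g\in V$ --- is the only nontrivial mixed case, and it is where both $(3.1)$ and $(3.3)$ are actually consumed. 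Its $V$-component is $(\lambda\mathscr{D}+\mathrm{ad}_{V}(x_{0}))([f,g]_{V})+[\alpha_{V}(f),\mathscr{D}(g)]_{V}+[\alpha_{V}(g),\mathscr{D}(f)]_{V}$, which vanishes by $(3.1)$ and the $\alpha_{V}$-derivation property of $\mathscr{D}$; its $e$-component contains $B_{V}(\mathscr{D}(\alpha_{V}(f)),\mathscr{D}(g))+B_{V}(\mathscr{D}(\alpha_{V}(g)),\mathscr{D}(f))$ --- this, not the two-$e^{*}$ case, is where $\mathscr{D}^{2}$ enters, after sliding one $\mathscr{D}$ across $B_{V}$ --- together with $B_{V}(\mathscr{D}(x_{0}),[f,g]_{V})$ coming from $[x_{0},[f,g]_{V}]$, and the sum equals $B_{V}\bigl((\mathrm{ad}_{V}(\mathscr{D}(x_{0}))+\mathscr{D}^{2}\circ\alpha_{V}+\alpha_{V}\circ\mathscr{D}^{2})(f),g\bigr)$, which is killed exactly by $(3.3)$. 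Your observation that $\mathscr{D}\alpha_{V}=\alpha_{V}\mathscr{D}$ forces $\alpha_{V}\circ\mathscr{D}^{2}+\mathscr{D}^{2}\circ\alpha_{V}=0$, so that $(3.3)$ amounts to $\mathrm{ad}_{V}(\mathscr{D}(x_{0}))=0$, is correct and is precisely what closes this $e$-component --- but you deploy it in the wrong case. As written, executing your plan would check only the trivial Jacobi cases and leave the identity unverified in the one configuration where the hypotheses $(3.1)$--$(3.3)$ matter.
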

\begin{proof}
(1) First, for any $f=je^{*}+v+ke\in L$, we have
\[
[f,f]=[je^{*}+v+ke,je^{*}+v+ke]=j[e^{*},v]+j[v,e^{*}]=2j\mathscr{D}(v)=0.
\]

Second, the bracket $[\cdot,\cdot]:L\times L\rightarrow L$ will be
proved to satisfy the Hom-Jacobi identity, that is, for any $f,g,h\in L$,
\[
[\alpha(h),[f,g]]+[\alpha(f),[g,h]]+[\alpha(g),[h,f]]=0.
\]

If $h=e,$ then we obtain
\[
[\alpha(e),[f,g]]+[\alpha(f),[g,e]]+[\alpha(g),[e,f]]=\lambda[e,[f,g]]=0.
\]

If $h=e^{*}$, then there exist three cases as follows:

(i) If $f=e$ (or the way around $g=e$), then the Hom-Jacobi identity
holds since $e\in\mathfrak{\mathscr{\mathfrak{z}}}(\mathfrak{g})$
and $\alpha(e)=\lambda e$.

(ii) If $f=e^{*}$ (or the way around $g=e^{*}$), then
\[
[\alpha(e^{*}),[e^{*},g]]+[\alpha(e^{*}),[g,e^{*}]]+[\alpha(g),[e^{*},e^{*}]]=2[\alpha(e^{*}),[e^{*},g]]=0.
\]

(iii) If $f,g\in V$, then we have
\begin{align*}
 & [\alpha(e^{*}),[f,g]]+[\alpha(f),[g,e^{*}]]+[\alpha(g),[e^{*},f]]\\
= & [\lambda e^{*}+x_{0}+\lambda_{0}e,[f,g]_{V}+B_{V}(\mathscr{D}(f),g)e]\\
 & +[\alpha_{V}(f)+B_{V}(x_{0},f)e,\mathscr{D}(g)]+[\alpha_{V}(g)+B_{V}(x_{0},g)e,\mathscr{D}(f)]\\
= & [\lambda e^{*}+x_{0},[f,g]_{V}]+[\alpha_{V}(f),\mathscr{D}(g)]+[\alpha_{V}(g),\mathscr{D}(f)]\\
= & \lambda\mathscr{D}([f,g]_{V})+[x_{0},[f,g]_{V}]+[\alpha_{V}(f),\mathscr{D}(g)]+[\alpha_{V}(g),\mathscr{D}(f)]\\
= & \lambda\mathscr{D}([f,g]_{V})+[x_{0},[f,g]_{V}]_{V}+B_{V}(\mathscr{D}(x_{0}),[f,g]_{V})e+[\alpha_{V}(f),\mathscr{D}(g)]_{V}\\
 & +B_{V}(\mathscr{D}(\alpha_{V}(f)),\mathscr{D}(g))e+[\alpha_{V}(g),\mathscr{D}(f)]_{V}+B_{V}(\mathscr{D}(\alpha_{V}(g)),\mathscr{D}(f))e\\
= & (\lambda\mathscr{D}+\mathrm{ad}_{V}(x_{0}))([f,g]_{V})+[\alpha_{V}(f),\mathscr{D}(g)]_{V}+[\mathscr{D}(f),\alpha_{V}(g)]_{V}\\
 & +(B_{V}([\mathscr{D}(x_{0}),f]_{V},g)+B_{V}(\mathscr{D}^{2}\circ\alpha_{V}(f),g)+B_{V}(g,\alpha_{V}\circ\mathscr{D}^{2}(f)))e\\
= & 2(\lambda\mathscr{D}+\mathrm{ad}_{V}(x_{0}))([f,g]_{V})+B_{V}((\mathrm{ad}_{V}(\mathscr{D}(x_{0}))+\mathscr{D}^{2}\circ\alpha_{V}+\alpha_{V}\circ\mathscr{D}^{2})(f),g)e\\
= & 2B_{V}(\mathrm{ad}_{V}(\mathscr{D}(x_{0}))(f),g)e\\
= & 0.
\end{align*}

If $f,g,h\in V$, then
\begin{align*}
 & [\alpha(h),[f,g]]+[\alpha(f),[g,h]]+[\alpha(g),[h,f]]\\
= & [\alpha_{V}(h)+B_{V}(x_{0},h)e,[f,g]_{V}+B_{V}(\mathscr{D}(f),g)e]\\
 & +[\alpha_{V}(f)+B_{V}(x_{0},f)e,[g,h]_{V}+B_{V}(\mathscr{D}(g),h)e]\\
 & +[\alpha_{V}(g)+B_{V}(x_{0},g)e,[h,f]_{V}+B_{V}(\mathscr{D}(h),f)e]\\
= & [\alpha_{V}(h),[f,g]_{V}]+[\alpha_{V}(f),[g,h]_{V}]+[\alpha_{V}(g),[h,f]_{V}]\\
= & ([\alpha_{V}(h),[f,g]_{V}]_{V}+[\alpha_{V}(f),[g,h]_{V}]_{V}+[\alpha_{V}(g),[h,f]_{V}]_{V})\\
 & +(B_{V}(\mathscr{D}(\alpha_{V}(h)),[f,g]_{V})+B_{V}(\mathscr{D}(\alpha_{V}(f)),[g,h]_{V})+B_{V}(\mathscr{D}(\alpha_{V}(g)),[h,f]_{V}))e\\
= & (B_{V}(\alpha_{V}(h),\mathfrak{\mathscr{D}}([f,g]_{V}))+B_{V}(\alpha_{V}(f),\mathscr{D}([g,h]_{V}))+B_{V}(\alpha_{V}(g),\mathscr{D}([h,f]_{V})))e\\
= & (B_{V}(\alpha_{V}(h),[\mathfrak{\mathscr{D}}(f),\alpha_{V}(g)]_{V}+[\alpha_{V}(f),\mathfrak{\mathscr{D}}(g)]_{V})\\
 & +B_{V}(\alpha_{V}(f),[\mathfrak{\mathscr{D}}(g),\alpha_{V}(h)]_{V}+[\alpha_{V}(g),\mathfrak{\mathscr{D}}(h)]_{V})\\
 & +B_{V}(\alpha_{V}(g),[\mathfrak{\mathscr{D}}(h),\alpha_{V}(f)]_{V}+[\alpha_{V}(h),\mathfrak{\mathscr{D}}(f)]_{V}))e\\
= & (B_{V}(\alpha_{V}(h),[\mathfrak{\mathscr{D}}(f),\alpha_{V}(g)]_{V})+B_{V}(\alpha_{V}(h),[\alpha_{V}(f),\mathfrak{\mathscr{D}}(g)]_{V})\\
 & +B_{V}(\alpha_{V}(f),[\mathfrak{\mathscr{D}}(g),\alpha_{V}(h)]_{V})+B_{V}(\alpha_{V}(f),[\alpha_{V}(g),\mathfrak{\mathscr{D}}(h)]_{V})\\
 & +B_{V}(\alpha_{V}(g),[\mathfrak{\mathscr{D}}(h),\alpha_{V}(f)]_{V})+B_{V}(\alpha_{V}(g),[\alpha_{V}(h),\mathfrak{\mathscr{D}}(f)]_{V}))e\\
= & 2(B_{V}([\alpha_{V}(h),,\alpha_{V}(f)]_{V},\mathscr{D}(g))+B_{V}([\alpha_{V}(f),,\alpha_{V}(g)]_{V},\mathscr{D}(h))\\
 & +B_{V}([\alpha_{V}(g),,\alpha_{V}(h)]_{V},\mathscr{D}(f)))e\\
= & 0.
\end{align*}

Until now, the Hom-Jacobi identity has been proved to hold.

Moreover, for $f_{1}=j_{1}e^{*}+v_{1}+k_{1}e,\:f_{2}=j_{2}e^{*}+v_{2}+k_{2}e\in L$,
we have
\begin{align*}
\alpha([f_{1},f_{2}])= & \alpha([j_{1}e^{*}+v_{1}+k_{1}e,j_{2}e^{*}+v_{2}+k_{2}e])\\
= & \alpha(j_{1}[e^{*},v_{2}]+j_{2}[v_{1},e^{*}]+[v_{1},v_{2}])\\
= & \alpha(j_{1}\mathscr{D}(v_{2})+j_{2}\mathscr{D}(v_{1})+[v_{1},v_{2}]_{V}+B_{V}(\mathscr{D}(v_{1}),v_{2})e)\\
= & j_{1}\alpha(\mathscr{D}(v_{2}))+j_{2}\alpha(\mathscr{D}(v_{1}))+\alpha([v_{1},v_{2}]_{V})+B_{V}(\mathscr{D}(v_{1}),v_{2})\alpha(e)\\
= & j_{1}\alpha_{V}(\mathscr{D}(v_{2}))+j_{1}B_{V}(x_{0},\mathscr{D}(v_{2}))e+j_{2}\alpha_{V}(\mathscr{D}(v_{1}))+j_{2}B_{V}(x_{0},\mathscr{D}(v_{1}))e\\
 & +\alpha_{V}([v_{1},v_{2}]_{V})+B_{V}(x_{0},[v_{1},v_{2}]_{V})e+B_{V}(\mathscr{D}(v_{1}),v_{2})\lambda e\\
= & j_{1}\alpha_{V}(\mathscr{D}(v_{2}))+j_{2}\alpha_{V}(\mathscr{D}(v_{1}))+j_{1}B_{V}(\mathscr{D}(x_{0}),v_{2})e+j_{2}B_{V}(\mathscr{D}(x_{0}),v_{1})e\\
 & +\alpha_{V}([v_{1},v_{2}]_{V})+B_{V}(x_{0},[v_{1},v_{2}]_{V})e+\lambda B_{V}(\mathscr{D}(v_{1}),v_{2})e
\end{align*}
and
\begin{align*}
 & [\alpha(f_{1}),\alpha(f_{2})]\\
= & [j_{1}\lambda e^{*}+j_{1}x_{0}+\alpha_{V}(v_{1}),j_{2}\lambda e^{*}+j_{2}x_{0}+\alpha_{V}(v_{2})]\\
= & j_{1}j_{2}\lambda[e^{*},x_{0}]+j_{1}\lambda[e^{*},\alpha_{V}(v_{2})]+j_{1}j_{2}\lambda[x_{0},e^{*}]+j_{1}[x_{0},\alpha_{V}(v_{2})]\\
 & +j_{2}\lambda[\alpha_{V}(v_{1}),e^{*}]+j_{2}[\alpha_{V}(v_{1}),x_{0}]+[\alpha_{V}(v_{1}),\alpha_{V}(v_{2})]\\
= & j_{1}\lambda\mathscr{D}(\alpha_{V}(v_{2}))+j_{2}\lambda\mathscr{D}(\alpha_{V}(v_{1}))+j_{1}[x_{0},\alpha_{V}(v_{2})]_{V}\\
 & +j_{1}B_{V}(\mathscr{D}(x_{0}),\alpha_{V}(v_{2}))e+j_{2}[x_{0},\alpha_{V}(v_{1})]_{V}+j_{2}B_{V}(\mathscr{D}(x_{0}),\alpha_{V}(v_{1}))e\\
 & +[\alpha_{V}(v_{1}),\alpha_{V}(v_{2})]_{V}+B_{V}(\mathscr{D}(\alpha_{V}(v_{1})),\alpha_{V}(v_{2}))e\\
= & j_{1}(\lambda\mathscr{D}+\mathrm{ad}_{V}(x_{0}))(\alpha_{V}(v_{2}))+j_{2}(\lambda\mathscr{D}+\mathrm{ad}_{V}(x_{0}))(\alpha_{V}(v_{1}))\\
 & +j_{1}B_{V}(\alpha_{V}(\mathscr{D}(x_{0})),v_{2})e+j_{2}B_{V}(\alpha_{V}(\mathscr{D}(x_{0})),v_{1})e\\
 & +\alpha_{V}([v_{1},v_{2}]_{V})+B_{V}(\alpha_{V}\circ\mathscr{D}\circ\alpha_{V}(v_{1})),v_{2})e\\
= & j_{1}\alpha_{V}\circ\mathscr{D}\circ\alpha_{V}(\alpha_{V}(v_{2}))+j_{2}\alpha_{V}\circ\mathscr{D}\circ\alpha_{V}(\alpha_{V}(v_{1}))\\
 & +j_{1}B_{V}(\mathscr{D}(x_{0}),v_{2})e+j_{2}B_{V}(\mathscr{D}(x_{0}),v_{1})e\\
 & +\alpha_{V}([v_{1},v_{2}]_{V})+B_{V}(\lambda\mathscr{D}+\mathrm{ad}_{V}(x_{0})(v_{1})),v_{2})e\\
= & j_{1}\alpha_{V}\circ\mathscr{D}(\alpha_{V}^{2}(v_{2}))+j_{2}\alpha_{V}\circ\mathscr{D}(\alpha_{V}^{2}(v_{1}))+j_{1}B_{V}(\mathscr{D}(x_{0}),v_{2})e\\
 & +j_{2}B_{V}(\mathscr{D}(x_{0}),v_{1})e+\alpha_{V}([v_{1},v_{2}]_{V})+B_{V}([x_{0},v_{1}]_{V},v_{2})e+\lambda B_{V}(\mathscr{D}(v_{1})),v_{2})e\\
= & j_{1}\alpha_{V}(\mathscr{D}(v_{2}))+j_{2}\alpha_{V}(\mathscr{D}(v_{1}))+j_{1}B_{V}(\mathscr{D}(x_{0}),v_{2})e+j_{2}B_{V}(\mathscr{D}(x_{0}),v_{1})e\\
 & +\alpha_{V}([v_{1},v_{2}]_{V})+B_{V}(x_{0},[v_{1},v_{2}]_{V})e+\lambda B_{V}(\mathscr{D}(v_{1}),v_{2})e.
\end{align*}

It follows that $\alpha([f_{1},f_{2}])=[\alpha(f_{1}),\alpha(f_{2})]$.
Therefore, $(L,[\cdot,\cdot],\alpha)$ is a multiplicative Hom-Lie
algebra.

Finally, the bilinear form $B:L\times L\rightarrow\mathbb{F}$ will
be proved to satisfy Eq. (2.1).

For any $f_{1}=j_{1}e^{*}+v_{1}+k_{1}e,\:f_{2}=j_{2}e^{*}+v_{2}+k_{2}e\in L$,
we have
\[
\alpha(f_{1})=\alpha(j_{1}e^{*}+v_{1}+k_{1}e)=j_{1}\lambda e^{*}+j_{1}x_{0}+(j_{1}\lambda_{0}+k_{1}\lambda+B_{V}(x_{0},v_{1}))e+\alpha_{V}(v_{1})
\]
and
\[
\alpha(f_{2})=\alpha(j_{2}e^{*}+v_{2}+k_{2}e)=j_{2}\lambda e^{*}+j_{2}x_{0}+(j_{2}\lambda_{0}+k_{2}\lambda+B_{V}(x_{0},v_{2}))e+\alpha_{V}(v_{2}).
\]

It follows that
\begin{align*}
 & B(\alpha(f_{1}),f_{2})\\
= & B(j_{1}\lambda e^{*}+j_{1}x_{0}+(j_{1}\lambda_{0}+k_{1}\lambda+B_{V}(x_{0},v_{1}))e+\alpha_{V}(v_{1}),j_{2}e^{*}+v_{2}+k_{2}e)\\
= & j_{1}j_{2}\lambda_{0}+j_{1}j_{2}\lambda B(e^{*},e^{*})+(j_{1}k_{2}+j_{2}k_{1})\lambda+j_{1}B_{V}(x_{0},v_{2})\\
 & +j_{2}B_{V}(x_{0},v_{1})+B_{V}(\alpha_{V}(v_{1}),v_{2})
\end{align*}
and
\begin{align*}
 & B(f_{1},\alpha(f_{2}))\\
= & B(j_{1}e^{*}+v_{1}+k_{1}e,j_{2}\lambda e^{*}+j_{2}x_{0}+(j_{2}\lambda_{0}+k_{2}\lambda+B_{V}(x_{0},v_{2}))e+\alpha_{V}(v_{2}))\\
= & j_{1}j_{2}\lambda_{0}+j_{1}j_{2}\lambda B(e^{*},e^{*})+(j_{1}k_{2}+j_{2}k_{1})\lambda+j_{1}B_{V}(x_{0},v_{2})\\
 & +j_{2}B_{V}(x_{0},v_{1})+B_{V}(v_{1},\alpha_{V}(v_{2})).
\end{align*}

Since $(V,[\cdot,\cdot]_{V},\alpha_{V},B_{V})$ is a quadratic Hom-Lie
algebra, we have $B_{V}(\alpha_{V}(v_{1}),v_{2})=B_{V}(v_{1},\alpha_{V}(v_{2}))$.
Therefore, $B(\alpha(f_{1}),f_{2})=B(f_{1},\alpha(f_{2}))$.

To sum up, the quadruple $(L,[\cdot,\cdot],\alpha,B)$ is a multiplicative
quadratic Hom-Lie algebra.

(2) Let $\alpha$ be invertible. It follows from $\alpha(e)=\lambda e$
that $\lambda\neq0$ .

Conversely, suppose that $\lambda\neq0$. Let $f=je^{*}+v+ke\in L$
such that $\alpha(f)=0$, then
\begin{align*}
0=\alpha(f) & =j(\lambda e^{*}+x_{0}+\lambda_{0}e)+(\alpha_{V}(v)+B_{V}(x_{0},v)e)+k\lambda e\\
 & =(j\lambda)e^{*}+(\alpha_{V}(v)+jx_{0})+(j\lambda_{0}+B_{V}(x_{0},v)+k\lambda)e.
\end{align*}

It follows that $j\lambda=0,\alpha_{V}(v)=jx_{0},j\lambda_{0}+B_{V}(x_{0},v)+k\lambda=0$.
Since $\lambda\neq0$, we obtain $j=0,\alpha_{V}(v)=0,B_{V}(x_{0},v)=k\lambda$.
Therefore, $j=k=0,v=0$ and $f=0$, which implies that $\alpha$ is
invertible.

Moreover, if $\alpha$ is an involution, it follows from $\alpha^{2}(e^{*})=e^{*}$
that
\[
\lambda^{2}e^{*}+(\alpha_{V}(x_{0})+\lambda x_{0})+B_{V}(x_{0},x_{0})e=e^{*}.
\]

Therefore, $\lambda^{2}=1,\alpha_{V}(x_{0})=\lambda x_{0},B_{V}(x_{0},x_{0})=0$.

Conversely, if the condition (3.4) holds, we have $\alpha^{2}(e)=\lambda^{2}e=e$,
\[
\alpha^{2}(e^{*})=\lambda^{2}e^{*}+(\alpha_{V}(x_{0})+\lambda x_{0})+B_{V}(x_{0},x_{0})e=e^{*}+2\alpha_{V}(x_{0})=e^{*}
\]
and
\begin{align*}
\alpha^{2}(v) & =\alpha(\alpha_{V}(v)+B_{V}(x_{0},v)e)=\alpha_{V}^{2}(v)+B_{V}(x_{0},\alpha_{V}(v))e+B_{V}(x_{0},v)\lambda e\\
 & =v+B_{V}(\alpha_{V}(x_{0}),v)e+B_{V}(\lambda x_{0},v)e=v+B_{V}(\alpha_{V}(x_{0})+\lambda x_{0},v)e=v.
\end{align*}

Therefore, for any $f=je^{*}+v+ke\in L$, we have $\alpha^{2}(f)=f$,
that is, $\alpha$ is an involution.
\end{proof}
\begin{remark} If $(V,[\cdot,\cdot]_{V},\alpha_{V},B_{V})$ is an
involutive quadratic Hom-Lie algebra and $\lambda=1$, then the conditions
(3.1)-(3.3) reduce to $x_{0}\in\mathfrak{z}(V)$ such that $\mathscr{D}(x_{0})=0$.

\end{remark}

\begin{definition} The multiplicative quadratic Hom-Lie algebra $(L,[\cdot,\cdot],\alpha,B)$
constructed in the above theorem is said to be the double extension
of the involutive quadratic Hom-Lie algebra $(V,[\cdot,\cdot]_{V},\alpha_{V},B_{V})$
by means of $(\mathscr{D},x_{0},\lambda,\lambda_{0})$.

\end{definition}

If the involutive quadratic Hom-Lie algebra $(V,[\cdot,\cdot]_{V},\alpha_{V},B_{V})$
is restricted, so is $(L,[\cdot,\cdot],\alpha,$ $B)$. More precisely,
the $2$-structure $[2]_{V}$ on $(V,[\cdot,\cdot]_{V},\alpha_{V},B_{V})$
can be extended to its double extension $(L,[\cdot,\cdot],\alpha,B)$.

\begin{theorem} Let $(V,[\cdot,\cdot]_{V},\alpha_{V},B_{V})$ be
a restricted involutive quadratic Hom-Lie algebra and $B_{V}$ $\mathscr{D}$-invariant,
where $\mathscr{D}\in\mathrm{Der}^{p}(V)$ and $\mathscr{D}$ has
$2$-property. If $\lambda=1$ in Theorem 3.1, then for any $m,l\in\mathbb{F}$,
the 2-structure $[2]_{V}$ on $V$ can be extended to its double extension
$L:=\mathscr{E}^{*}\oplus V\oplus\mathscr{E}$ as follows (for any
$u\in V$):
\begin{align*}
u^{[2]_{L}} & =u^{[2]_{V}}+\mathscr{P}(u)e,\\
(e^{*})^{[2]_{L}} & =a_{0}+le+\xi e^{*},\\
e^{[2]_{L}} & =me+u_{0},
\end{align*}
where $a_{0}$ and $\xi$ are as in (2.2) (i.e. the $2$-property),
$u_{0}\in\mathfrak{z}(\alpha_{V}(V))$ such that $\mathscr{D}(u_{0})=0$
and $\mathscr{P}$ is a map satisfying (for any $u,v\in V$and any
$\gamma\in\mathbb{F}$):
\begin{align}
\mathscr{P}(\gamma u) & =\gamma^{2}\mathscr{P}(u),\\
\mathscr{P}(u+v) & =\mathscr{P}(u)+\mathscr{P}(v)+B_{V}(\mathscr{D}(u),v).
\end{align}

\end{theorem}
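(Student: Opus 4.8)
The plan is to verify that the three displayed formulas define a genuine $2$-structure on $L$, i.e. that they satisfy axioms (R1)--(R3) of Definition 2.10 with $p=2$. The efficient route is through Lemma 2.11: fix a basis $(u_i)_i$ of $V$ and enlarge it to the basis $\{e^*\}\cup(u_i)_i\cup\{e\}$ of $L$; it then suffices to check the identity $\mathrm{ad}_L(x^{[2]_L})\circ\alpha=\mathrm{ad}_L(\alpha(x))\circ\mathrm{ad}_L(x)$ for $x$ ranging over these basis vectors, after which Lemma 2.11 produces the unique $2$-structure taking the prescribed values. Throughout I will use that, since $\lambda=1$, Remark 3.2 forces $x_0\in\mathfrak{z}(V)$ and $\mathscr{D}(x_0)=0$, and that $e$ is central with $\mathrm{ad}_L(e)=0$; the latter is exactly what makes the coefficients $m$, $l$ and the scalars $\mathscr{P}(u_i)$ free, since adding a multiple of $e$ to any $x^{[2]_L}$ leaves $\mathrm{ad}_L(x^{[2]_L})\circ\alpha$ unchanged.

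For $x=e$ the left-hand side vanishes, so the requirement is $\mathrm{ad}_L(u_0)\circ\alpha=0$; evaluating on $je^*+v+ke$ and using $[e,\cdot]=0$ shows this is equivalent to $[u_0,\alpha_V(v)]_V=0$ for all $v$ together with $\mathscr{D}(u_0)=0$, which is precisely the hypothesis $u_0\in\mathfrak{z}(\alpha_V(V))$, $\mathscr{D}(u_0)=0$. For $x=e^*$, a direct computation gives $\mathrm{ad}_L(\alpha(e^*))\circ\mathrm{ad}_L(e^*)(f)=\mathscr{D}^2(v)$ on $f=je^*+v+ke$, while $\mathrm{ad}_L((e^*)^{[2]_L})\circ\alpha(f)=[a_0,\alpha_V(v)]_V+\xi\mathscr{D}(\alpha_V(v))$; here the terms carrying $j$ drop out because $x_0$ is central, $\mathscr{D}(x_0)=0$ and $\mathscr{D}(a_0)=0$, and the two expressions coincide exactly by the $2$-property (2.2). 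For $x=u\in V$ one evaluates both sides on $f=je^*+v+ke$ and compares the $V$- and $e$-components separately: the $V$-components agree by axiom (R1) for $V$ (for the part independent of $j$) and by the restricted-derivation identity $\mathscr{D}(u^{[2]_V})=[\alpha_V(u),\mathscr{D}(u)]_V$ of Definition 2.16 with $p=2$ (for the coefficient of $j$), while the part of the $e$-component independent of $j$ agrees after rewriting $B_V(\mathscr{D}(u^{[2]_V}),\alpha_V(v))$ via invariance, the quadratic condition (2.1) and $\mathscr{D}\circ\alpha_V=\alpha_V\circ\mathscr{D}$.

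The one delicate point, which I expect to be the main obstacle, is the coefficient of $j$ in the $e$-component for $x=u$. On one side it equals $B_V(\mathscr{D}(u^{[2]_V}),x_0)=B_V([\alpha_V(u),\mathscr{D}(u)]_V,x_0)$, which vanishes by invariance since $x_0$ is central; on the other it equals $B_V(\alpha_V(\mathscr{D}(u)),\mathscr{D}(u))$, and one must show this is $0$. I would polarize the characteristic-$2$ invariance $B_V(x,\mathscr{D}(x))=0$ to the bilinear identity $B_V(\mathscr{D}(x),y)=B_V(x,\mathscr{D}(y))$, use it together with $\mathscr{D}\alpha_V=\alpha_V\mathscr{D}$ to rewrite $B_V(\alpha_V(\mathscr{D}(u)),\mathscr{D}(u))=B_V(\alpha_V(u),\mathscr{D}^2(u))$, expand $\mathscr{D}^2$ by the $2$-property (2.2), and kill the two resulting terms: the first, $\xi B_V(\alpha_V(u),\mathscr{D}(\alpha_V(u)))$, by invariance, and the second, $B_V(\alpha_V(u),[a_0,\alpha_V(u)]_V)$, by the characteristic-$2$ computation $B_V([a_0,y],y)=B_V(a_0,[y,y])=0$.

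With all three cases settled, Lemma 2.11 yields the unique $2$-structure $[2]_L$ with the stated basis values, so $L$ is restricted. Finally, to obtain the displayed formula for general $u\in V$ and the conditions on $\mathscr{P}$, I would feed the basis values into axioms (R2) and (R3): (R2) gives $\mathscr{P}(\gamma u)=\gamma^2\mathscr{P}(u)$, that is (3.5), and (R3) applied to $u,v\in V$, whose $V$-component reproduces $(u+v)^{[2]_V}$ and whose $e$-component reads off $\mathscr{P}(u+v)=\mathscr{P}(u)+\mathscr{P}(v)+B_V(\mathscr{D}(u),v)$, that is (3.6); the consistency of the latter under swapping $u$ and $v$ is again guaranteed by the polarized invariance $B_V(\mathscr{D}(u),v)=B_V(\mathscr{D}(v),u)$.
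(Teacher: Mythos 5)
Your proposal is correct and follows essentially the same route as the paper's proof: both reduce, via the basis criterion of Lemma 2.11, to checking the three ad-identities for $e$, $e^{*}$ and elements of $V$, and your handling of the delicate term --- rewriting $B_{V}(\mathscr{D}(\alpha_{V}(u)),\mathscr{D}(u))$ as $B_{V}(\alpha_{V}(u),\mathscr{D}^{2}(u))$, expanding by the $2$-property, and killing both resulting terms by invariance and $[y,y]=0$ --- is exactly the paper's computation, as is the final verification of (R3) via condition (3.6). The only cosmetic differences are that you check the identity on a basis of $V$ and recover the general formula for $u^{[2]_{L}}$ from (R2)/(R3), whereas the paper verifies it for all $u\in V$ directly, and that your rewriting of the remaining $e$-component uses the quadratic condition (2.1) with multiplicativity where the paper uses polarized $\mathscr{D}$-invariance with the $\alpha_{V}$-derivation property; both are valid.
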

\begin{proof}
According to Lemma 2.10 for $p=2$, it's suffcient to prove that
\begin{align*}
\mathrm{ad}(\alpha(e))\circ\mathrm{ad}(e) & =\mathrm{ad}(me+u_{0})\circ\alpha,\\
\mathrm{ad}(\alpha(e^{*}))\circ\mathrm{ad}(e^{*}) & =\mathrm{ad}(a_{0}+le+\xi e^{*})\circ\alpha
\end{align*}
and
\[
\mathrm{ad}(\alpha(u))\circ\mathrm{ad}(u)=\mathrm{ad}(u^{[2]_{V}}+\mathscr{P}(u)e)\circ\alpha.
\]

Indeed, for any $f=je^{*}+v+ke\in L$, we have
\[
\alpha(f)=j\alpha(e^{*})+\alpha(v)+k\alpha(e)=je^{*}+(jx_{0}+\alpha_{V}(v))+(j\lambda_{0}+B_{V}(x_{0},v)+k)e.
\]

Therefore,
\begin{align*}
 & \mathrm{ad}(me+u_{0})\circ\alpha(f)+\mathrm{ad}(\alpha(e))\circ\mathrm{ad}(e)(f)\\
= & [me+u_{0},\alpha(f)]+[\alpha(e),[e,f]]\\
= & [u_{0},je^{*}+jx_{0}+\alpha_{V}(v)]\\
= & j[u_{0},e^{*}]+j[u_{0},x_{0}]+[u_{0},\alpha_{V}(v)]\\
= & j\mathscr{D}(u_{0})+j[u_{0},x_{0}]_{V}+jB_{V}(\mathscr{D}(u_{0}),x_{0})e\\
 & +[u_{0},\alpha_{V}(v)]_{V}+B_{V}(\mathscr{D}(u_{0}),\alpha_{V}(v))e\\
= & 0
\end{align*}
and
\begin{align*}
 & \mathrm{ad}(a_{0}+le+\xi e^{*})\circ\alpha(f)+\mathrm{ad}(\alpha(e^{*}))\circ\mathrm{ad}(e^{*})(f)\\
= & [a_{0}+le+\xi e^{*},\alpha(f)]+[\alpha(e^{*}),[e^{*},f]]\\
= & [a_{0}+\xi e^{*},je^{*}+jx_{0}+\alpha_{V}(v)]+[e^{*}+x_{0}+\lambda_{0}e,[e^{*},je^{*}+v+ke]]\\
= & j[a_{0},e^{*}]+j[a_{0},x_{0}]+[a_{0},\alpha_{V}(v)]+\xi[e^{*},\alpha_{V}(v)]+[e^{*}+x_{0},\mathscr{D}(v)]\\
= & j\mathscr{D}(a_{0})+j[a_{0},x_{0}]_{V}+jB_{V}(D(a_{0}),x_{0})e+[a_{0},\alpha_{V}(v)]_{V}\\
 & +B_{V}(\mathscr{D}(a_{0}),\alpha_{V}(v))e+\xi\mathscr{D}(\alpha_{V}(v))+\mathscr{D}^{2}(v)\\
= & \mathrm{ad}_{V}(a_{0})(\alpha_{V}(v))+\xi\mathscr{D}(\alpha_{V}(v))+\mathscr{D}^{2}(v)\\
= & 2\mathscr{D}^{2}(v)\\
= & 0.
\end{align*}

Moreover,
\begin{align*}
 & \mathrm{ad}(u^{[2]_{V}}+\mathscr{P}(u)e)\circ\alpha(f)+\mathrm{ad}(\alpha(u))\circ\mathrm{ad}(u)(f)\\
= & [u^{[2]_{V}}+\mathscr{P}(u)e,\alpha(f)]+[\alpha(u),[u,f]]\\
= & [u^{[2]_{V}},je^{*}+jx_{0}+\alpha_{V}(v)]+[\alpha_{V}(u)+B_{V}(x_{0},u)e,[u,je^{*}+v+ke]]\\
= & j[u^{[2]_{V}},e^{*}]+j[u^{[2]_{V}},x_{0}]+[u^{[2]_{V}},\alpha_{V}(v)]+[\alpha_{V}(u),j[u,e^{*}]+[u,v]]\\
= & j\mathscr{D}(u^{[2]_{V}})+j[x_{0},u^{[2]_{V}}]_{V}+jB_{V}(\mathscr{D}(x_{0}),u^{[2]_{V}})e+[\alpha_{V}(v),u^{[2]_{V}}]_{V}\\
 & +B_{V}(\mathscr{D}(\alpha_{V}(v)),u^{[2]_{V}})e+j[\alpha_{V}(u),\mathscr{D}(u)]+[\alpha_{V}(u),[u,v]]\\
= & j\mathscr{D}(u^{[2]_{V}})+[u^{[2]_{V}},\alpha_{V}(v)]_{V}+B_{V}(\mathscr{D}(u^{[2]_{V}}),\alpha_{V}(v))e+j[\alpha_{V}(u),\mathscr{D}(u)]_{V}\\
 & +jB_{V}(\mathscr{D}(\alpha_{V}(u)),\mathscr{D}(u))e+[\alpha_{V}(u),[u,v]_{V}]_{V}+B_{V}(\mathscr{D}(\alpha_{V}(u)),[u,v]_{V})e\\
= & B_{V}(\mathscr{D}(u^{[2]_{V}}),\alpha_{V}(v))e+jB_{V}(\mathscr{D}(\alpha_{V}(u)),\mathscr{D}(u))e+B_{V}(\mathscr{D}(\alpha_{V}(u)),[u,v]_{V})e.\\
\end{align*}

Since
\begin{align*}
 & B_{V}(\mathscr{D}(\alpha_{V}(u)),\mathscr{D}(u))\\
= & B_{V}(\alpha_{V}(u),\mathscr{D}^{2}(u))\\
= & B_{V}(\alpha_{V}(u),(\xi\mathscr{D}\circ\alpha_{V}+\mathrm{ad}_{V}(a_{0})\circ\alpha_{V})(u))\\
= & B_{V}(\alpha_{V}(u),\xi\mathscr{D}(\alpha_{V}(u))+\mathrm{ad}_{V}(a_{0})(\alpha_{V}(u)))\\
= & \xi B_{V}(\mathscr{D}(\alpha_{V}(u)),\alpha_{V}(u))+B_{V}(\alpha_{V}(u),[a_{0},\alpha_{V}(u)]_{V})\\
= & B_{V}([\alpha_{V}(u),\alpha_{V}(u)]_{V},a_{0})\\
= & 0
\end{align*}
and
\begin{align*}
 & B_{V}(\mathscr{D}(\alpha_{V}(u)),[u,v]_{V})e\\
= & B_{V}(\alpha_{V}(u),\mathscr{D}[u,v]_{V})e\\
= & B_{V}(\alpha_{V}(u),[\mathscr{D}(u),\alpha_{V}(v)]_{V}+[\alpha_{V}(u),\mathscr{D}(v)]_{V})e\\
= & B_{V}(\alpha_{V}(u),[\mathscr{D}(u),\alpha_{V}(v)]_{V})e+B_{V}(\alpha_{V}(u),[\alpha_{V}(u),\mathscr{D}(v)]_{V})e\\
= & B_{V}([\alpha_{V}(u),\mathscr{D}(u)]_{V},\alpha_{V}(v))e\\
= & B_{V}(\mathscr{D}(u^{[2]_{V}}),\alpha_{V}(v))e,
\end{align*}
\\
 we obtain
\begin{align*}
\mathrm{ad}(u^{[2]_{V}}+\mathscr{P}(u)e)\circ\alpha(f)+\mathrm{ad}(\alpha(u))\circ\mathrm{ad}(u)(f) & =2B_{V}(\mathscr{D}(u^{[2]_{V}}),\alpha_{V}(v))e=0.
\end{align*}

Finally, for $u,v\in V$, we have
\begin{align*}
(u+v)^{[2]_{\mathfrak{g}}}= & (u+v)^{[2]_{V}}+\mathscr{P}(u+v)e=u^{[2]_{V}}+v{}^{[2]_{V}}+[u,v]_{V}+\mathscr{P}(u+v)e\\
= & u^{[2]_{\mathfrak{g}}}+\mathscr{P}(u)e+v{}^{[2]_{\mathfrak{g}}}+\mathscr{P}(v)e+[u,v]+B_{V}(\mathscr{D}(u),v)e+\mathscr{P}(u+v)e\\
= & u^{[2]_{\mathfrak{g}}}+v{}^{[2]_{\mathfrak{g}}}+[u,v].
\end{align*}

To sum up, $[2]_{\mathfrak{g}}$ is a $2$-structure on $L$.
\end{proof}
The following theorem is the converse of Theorem 3.4.

\begin{theorem} Let $(L,[\cdot,\cdot],\alpha,B)$ be an irreducible
restricted quadratic Hom-Lie algebra such that $\mathrm{dim}L>1$
and $\mathfrak{z}(L)\neq\{0\}$. If $0\neq e\in\mathfrak{z}(L)$ and
$\mathscr{E}=\mathrm{Span}\{e\}$ such that $\mathscr{E}^{\perp}$
is a $2$-ideal, then $(L,[\cdot,\cdot],\alpha,B)$ is the double
extension of a restricted involutive multiplicative quadratic Hom-Lie
algebra $(V,[\cdot,\cdot]_{V},\alpha_{V},B_{V})$ by means of $\mathscr{D}$,
where $\mathscr{D}\in\mathrm{Der}^{p}(V)$ with $2$-property.

\end{theorem}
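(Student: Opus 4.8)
The plan is to reconstruct the data $(\mathscr{D},x_{0},\lambda,\lambda_{0})$ of Theorem 3.1 intrinsically from the pair $(L,e)$, in the spirit of the Medina--Revoy converse adapted to the Hom and restricted settings. First I would show that $\mathscr{E}=\mathrm{Span}\{e\}$ is totally isotropic: since $e\in\mathfrak{z}(L)$ the line $\mathscr{E}$ is an ideal, and if $B|_{\mathscr{E}\times\mathscr{E}}$ were nondegenerate then $\mathscr{E}$ would be a nondegenerate ideal with $\{0\}\neq\mathscr{E}\neq L$ (as $\dim L>1$), contradicting irreducibility; hence $B(e,e)=0$ and $\mathscr{E}\subseteq\mathscr{E}^{\perp}$ with $\dim\mathscr{E}^{\perp}=\dim L-1$. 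Invariance of $B$ together with $\alpha(\mathscr{E})\subseteq\mathscr{E}$ shows $\mathscr{E}^{\perp}$ is an ideal, and nondegeneracy of $B$ on $L$ gives $(\mathscr{E}^{\perp})^{\perp}=\mathscr{E}$, so the radical of $B|_{\mathscr{E}^{\perp}}$ is exactly $\mathscr{E}$. Setting $V:=\mathscr{E}^{\perp}/\mathscr{E}$, the bracket, twist and form descend to well-defined data $[\cdot,\cdot]_{V}$, $\alpha_{V}$, $B_{V}$, with $B_{V}$ symmetric, nondegenerate, invariant and satisfying (2.1); I would then verify $\alpha_{V}^{2}=\mathrm{id}$, so that $(V,[\cdot,\cdot]_{V},\alpha_{V},B_{V})$ is an involutive quadratic Hom-Lie algebra.

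Next I would split off a dual line. As $B$ is nondegenerate and $e\neq0$, choose $e^{*}\in L$ with $B(e^{*},e)=1$; then $e^{*}\notin\mathscr{E}^{\perp}$ and $L=\mathrm{Span}\{e^{*}\}\oplus\mathscr{E}^{\perp}$. Crucially I would fix the section $V\hookrightarrow\mathscr{E}^{\perp}$ to have image $\mathscr{E}^{\perp}\cap(e^{*})^{\perp}$, so that the chosen copy of $V$ is $B$-orthogonal to $e^{*}$; this identifies $L$ with $\mathscr{E}^{*}\oplus V\oplus\mathscr{E}$ as in Theorem 3.1. I define $\mathscr{D}$ to be the map on $V$ induced by $\mathrm{ad}(e^{*})=[e^{*},\cdot]$ (well-defined since $\mathscr{E}^{\perp}$ is an ideal). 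Projecting the Hom-Jacobi identity and the multiplicativity relation $\alpha([e^{*},x])=[\alpha(e^{*}),\alpha(x)]$ to $V$ yields $\mathscr{D}\circ\alpha_{V}=\alpha_{V}\circ\mathscr{D}$ together with the Leibniz rule, i.e. $\mathscr{D}\in\mathrm{Der}_{\alpha_{V}}(V)$. Writing $\alpha(e)=\lambda e$ recovers $\lambda$, and the expansion $\alpha(e^{*})=\lambda e^{*}+x_{0}+\lambda_{0}e$---whose $e^{*}$-coefficient is forced to equal $\lambda$ by $B(\alpha(e^{*}),e)=B(e^{*},\alpha(e))$---recovers $x_{0}\in V$ and $\lambda_{0}\in\mathbb{F}$. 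Conditions (3.1)--(3.3) then drop out of multiplicativity and of the Hom-Jacobi identity evaluated on $(e^{*},e^{*},x)$ and $(e^{*},x,y)$.

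It remains to identify the bracket and form with the double-extension formulas. Using invariance to read off the $\mathscr{E}$-component of $[x,y]$ by pairing with $e^{*}$, and the orthogonality $B(V,e^{*})=0$ coming from the section choice, I get $B([x,y],e^{*})=B(x,[y,e^{*}])=B_{V}(\mathscr{D}(x),y)$, so the bracket on $L$ is exactly $[x,y]_{V}+B_{V}(\mathscr{D}(x),y)e$; similarly $B(\alpha(x),e^{*})=B(x,\alpha(e^{*}))$ gives $\alpha(x)=\alpha_{V}(x)+B_{V}(x_{0},x)e$. The $\mathscr{D}$-invariance of $B_{V}$ for $p=2$, namely $B_{V}(x,\mathscr{D}(x))=0$, follows at once from $B([e^{*},x],x)=B(e^{*},[x,x])=0$. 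With these identifications $L$ is, as a quadratic Hom-Lie algebra, the double extension of $V$ by $(\mathscr{D},x_{0},\lambda,\lambda_{0})$.

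Finally, for the restricted statement I would descend $[2]$ to $V$: since $\mathscr{E}^{\perp}$ is a $2$-ideal, $[2]$ restricts to $\mathscr{E}^{\perp}$, and to pass to $V=\mathscr{E}^{\perp}/\mathscr{E}$ I must show that $\mathscr{E}$ is itself a $2$-ideal, i.e. $e^{[2]}\in\mathscr{E}$; here R1 forces $\mathrm{ad}(e^{[2]})\circ\alpha^{p-1}=0$ (because $\mathrm{ad}(e)=0$), whence $e^{[2]}\in\mathfrak{z}(L)\cap\mathscr{E}^{\perp}$, and irreducibility pins this set down to $\mathscr{E}$. The restricted-derivation identity and the $2$-property for $\mathscr{D}$ then come from R1 applied to $e^{*}$: via $(e^{*})^{[2]}=a_{0}+le+\xi e^{*}$ it yields $\mathscr{D}^{2}=\xi\,\mathscr{D}\circ\alpha_{V}+\mathrm{ad}_{V}(a_{0})\circ\alpha_{V}$ with $\mathscr{D}(a_{0})=0$ on $V$. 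I expect the main obstacle to lie precisely in this coherent transfer of the restricted structure: verifying $\alpha_{V}^{2}=\mathrm{id}$ at the level of the quotient, confirming that $\mathscr{E}$ is a $2$-ideal so that $[2]_{V}$ is well defined, and checking that the reconstructed $(\mathscr{D},x_{0},\lambda,\lambda_{0})$ together with $[2]_{V}$ satisfy simultaneously (3.1)--(3.3), the restricted-derivation axiom and the $2$-property, so that feeding them into Theorem 3.4 reproduces $(L,[\cdot,\cdot],\alpha,B)$ with its $2$-structure exactly.
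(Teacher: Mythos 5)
Your first three paragraphs track the paper's own route: the paper likewise gets $B(e,e)=0$ from irreducibility, picks $e^{*}$ with $B(e^{*},e)=1$, sets $\Gamma=\mathscr{E}\oplus\mathscr{E}^{*}$ and works with the subspace $V=\Gamma^{\perp}$ (which is exactly your section $\mathscr{E}^{\perp}\cap(e^{*})^{\perp}$), deferring the identification of the quadratic Hom-Lie structure to the analogue of Theorem 6.6 of \citep{BM}; your $(e^{*})^{[2]}$ computation giving $\mathscr{D}^{2}=\xi\mathscr{D}\circ\alpha_{V}+\mathrm{ad}_{V}(a_{0})\circ\alpha_{V}$, $\mathscr{D}(a_{0})=0$ also matches the paper. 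The genuine gap is in your transfer of the $2$-structure. Passing to the quotient $V=\mathscr{E}^{\perp}/\mathscr{E}$ does require $e^{[2]_{L}}\in\mathscr{E}$, since $(v+ce)^{[2]_{L}}=v^{[2]_{L}}+c^{2}e^{[2]_{L}}+[v,ce]=v^{[2]_{L}}+c^{2}e^{[2]_{L}}$; but that containment is not available, and in fact cannot be proved: Theorem 3.4 (of which this theorem is the converse) explicitly constructs admissible $2$-structures on double extensions with $e^{[2]_{L}}=me+u_{0}$, where $u_{0}$ is an arbitrary element of $\mathfrak{z}(\alpha_{V}(V))$ with $\mathscr{D}(u_{0})=0$, possibly nonzero, and such an $L$ satisfies every hypothesis of the present theorem (in particular $\mathscr{E}^{\perp}=\mathscr{E}\oplus V$ is still a $2$-ideal). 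So any proof that forces $e^{[2]_{L}}\in\mathscr{E}$ proves something false in the intended generality. Your argument for it is also flawed on its own terms: in characteristic $2$, (R1) with $\mathrm{ad}(e)=0$ gives only $\mathrm{ad}(e^{[2]_{L}})\circ\alpha=0$, i.e. $e^{[2]_{L}}$ centralizes $\alpha(L)$, not $L$ ($\alpha$ is not assumed invertible here, and invertibility fails precisely when $\lambda=0$); and you give no argument that irreducibility forces $\mathfrak{z}(L)\cap\mathscr{E}^{\perp}=\mathscr{E}$, which it does not in general.

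The paper avoids the descent problem entirely by never forming the quotient: it keeps $V=\Gamma^{\perp}$ as a fixed complement, observes that the $2$-ideal hypothesis gives $v^{[2]_{L}}\in\mathscr{E}^{\perp}=\mathscr{E}\oplus V$ for every $v\in V$, and simply defines $s(v)\in V$ and $\mathscr{P}(v)\in\mathbb{F}$ by the decomposition $v^{[2]_{L}}=\mathscr{P}(v)e+s(v)$, with no compatibility condition on $e^{[2]_{L}}$ needed. Nondegeneracy of $B_{V}$ applied to (R1) for $v\in V$ then yields $[s(v),\alpha_{V}(u)]_{V}=[\alpha_{V}(v),[v,u]_{V}]_{V}$ and, from the $e$-components, $\mathscr{D}(s(v))=[\alpha_{V}(v),\mathscr{D}(v)]_{V}$, so $\mathscr{D}$ is restricted with respect to $s$ (note this comes from (R1) applied to elements of $V$, not to $e^{*}$ as you propose); (R3) gives $s(u+v)=s(u)+s(v)+[u,v]_{V}$. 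Writing $e^{[2]_{L}}=u_{0}+me+\delta e^{*}$ and applying (R1) to $e$ gives $\delta=0$, $u_{0}\in\mathfrak{z}(\alpha_{V}(V))$ and $\mathscr{D}(u_{0})=0$, i.e. exactly the form permitted by Theorem 3.4 rather than $e^{[2]_{L}}\in\mathscr{E}$. If you replace your quotient-plus-descent step by this direct projection onto the fixed complement $V$, the remainder of your outline goes through.
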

\begin{proof}
According to Lemma 2.8, we have $\alpha(\mathfrak{z}(L))\subset\mathfrak{z}(L)$.
Since $0\neq e\in\mathfrak{z}(L)$, the vecter space $\mathscr{E}$
is an ideal of $\mathfrak{g}$. Then there exists $\lambda\in\mathbb{F}$
such that $\alpha(e)=\lambda e$. Moreover, since $\mathscr{E}$ is
an ideal of $L$, $L$ is irreducible and $\mathrm{dim}L>1$, we have
$B(e,e)=0$. It follows from $B$ being nondegenerate that there exists
$0\neq e^{*}\in L$ such that $B(e^{*},e)=1$.

Let $\mathscr{E}^{*}=\mathrm{Span}\{e^{*}\}$ and $\Gamma=\mathscr{E}\oplus\mathscr{E}^{*}$,
then $\Gamma$ is nondegenerate (i.e. $B|_{\Gamma\times\Gamma}$ is
nondegenerate) and $L=\Gamma\oplus\Gamma^{\bot}$. Furthermore, set
$V:=\Gamma^{\bot}$, then $\mathscr{E}{}^{\perp}=\mathscr{E}\oplus V$,
$B|_{V\times V}$ is nondegenerate and there exists a decomposition
$L=\mathscr{E}{}^{*}\oplus V\oplus\mathscr{E}$.

It follows from $\mathscr{E}{}^{\perp}$ being an ideal that $\alpha(\mathscr{E}{}^{\perp})\subset\mathscr{E}{}^{\perp}$.
Then there exists a linear map $\alpha_{V}:V\rightarrow V$ and a
linear form $\kappa:V\rightarrow\mathbb{F}$ such that $\alpha(u)=\alpha_{V}(u)+\kappa(u)e,\forall u\in V$.
In addition, there exists $l\in\mathbb{F},x_{0}\in V$ and $\lambda_{0}\in\mathbb{F}$
such that $\alpha(e^{*})=le^{*}+x_{0}+\lambda_{0}e$.

Be analogue to Theorem 6.6 in \citep{BM}, it can be proved that there
exists a multiplicative quadratic Hom-Lie algebra structure on $V$
such that $L$ is the double extension of $V$ as given in Theorem
3.1.

Denote by $B_{V}:=B|_{V\times V}$ the nondegenerate invariant bilinear
form on $V$. We will prove that there exists a $2$-structure on
$V$.

Let $[2]_{L}:L\rightarrow L,g\mapsto g^{[2]_{L}}$ be a 2-structure
on $L$. Since $V\subset\mathscr{E}{}^{\perp}$, we have
\[
v^{[2]_{L}}\in\mathscr{E}_{p}{}^{\perp}=\mathscr{E}{}^{\perp}=\mathscr{E}\oplus V,\:\forall v\in V.
\]

It follows that $v^{[2]_{L}}=\mathscr{P}(v)e+s(v)$, where $s(v)\in V$.
The map $s:V\rightarrow V,v\mapsto s(v)$ will be proved to be a $2$-structure
on $V$.

Since $(\gamma v)^{[2]_{L}}=\gamma^{2}v{}^{[2]_{L}}$, we have $\mathscr{P}(\gamma v)=\gamma^{2}\mathscr{P}(v)$
and $s(\gamma v)=\gamma^{2}s(v)$.

Besides, for every $u\in V$, we have
\begin{align*}
0= & [v^{[2]_{L}},\alpha(u)]+[\alpha(v),[v,u]]\\
= & [\mathscr{P}(v)e+s(v),\alpha_{V}(u)+B_{V}(x_{0},u)e]\\
 & +[\alpha_{V}(v)+B_{V}(x_{0},v)e,[v,u]_{V}+B_{V}(\mathscr{D}(v),u)e]\\
= & [s(v),\alpha_{V}(u)]+[\alpha_{V}(v),[v,u]_{V}]\\
= & [s(v),\alpha_{V}(u)]_{V}+B_{V}(\mathscr{D}(s(v)),\alpha_{V}(u))e\\
 & +[\alpha_{V}(v),[v,u]_{V}]_{V}+B_{V}(\mathscr{D}(\alpha_{V}(v)),[v,u]_{V})e\\
= & [s(v),\alpha_{V}(u)]_{V}+B_{V}(\mathscr{D}(s(v)),\alpha_{V}(u))e\\
 & +[\alpha_{V}(v),[v,u]_{V}]_{V}+B_{V}([\alpha_{V}(v),\mathscr{D}(v)]_{V},\alpha_{V}(u))e.
\end{align*}

It follows from $B_{V}$ being nondegenerate that
\[
[s(v),\alpha_{V}(u)]_{V}=[\alpha_{V}(v),[v,u]_{V}]_{V}
\]
and
\[
\mathscr{D}(s(v))=[\alpha_{V}(v),\mathscr{D}(v)]_{V}.
\]

Moreover, for any $u,v\in V$, we have
\begin{align*}
0= & (u+v)^{[2]_{L}}+u^{[2]_{L}}+v^{[2]_{L}}+[u,v]\\
= & (\mathscr{P}(u+v)e+s(u+v))+(\mathscr{P}(u)e+s(u))\\
 & +(\mathscr{P}(v)e+s(v))+([u,v]_{V}+B_{V}(\mathscr{D}(u),v)e)\\
= & \text{(}s(u+v)+s(u)+s(v)+[u,v]_{V})\\
 & +(\mathscr{P}(u+v)+\mathscr{P}(u)+\mathscr{P}(v)+B_{V}(\mathscr{D}(u),v))e.
\end{align*}

It follows that $s(u+v)=s(u)+s(v)+[u,v]_{V}$ and $\mathscr{P}(u+v)=\mathscr{P}(u)+\mathscr{P}(v)+B_{V}(\mathscr{D}(u),v)$.

It implies that the map $s$ defines a $2$-structure on $V$, $\mathscr{D}$
is a restricted derivation with respect to $s$ on $V$ and the map
$\mathscr{P}$ satisfies Eqs. (3.5) and (3.6).

Assume that $e^{[2]_{L}}=u_{0}+me+\delta e^{*}$, where $m,\delta\in\mathbb{F}$.
For every $v\in V$, we have
\begin{align*}
0 & =[e^{[2]_{L}},\alpha(v)]+[\alpha(e),[e,v]]\\
 & =[u_{0}+me+\delta e^{*},\alpha_{V}(v)+B_{V}(x_{0},v)e]\\
 & =[u_{0}+\delta e^{*},\alpha_{V}(v)]\\
 & =[u_{0},\alpha_{V}(v)]_{V}+B_{V}(\mathscr{D}(u_{0}),\alpha_{V}(v))e+\delta\mathscr{D}(\alpha_{V}(v))\\
 & =([u_{0},\alpha_{V}(v)]_{V}+\delta\mathscr{D}(\alpha_{V}(v)))+B_{V}(\mathscr{D}(u_{0}),\alpha_{V}(v))e.
\end{align*}

Since $B_{V}$ is nondegenerate, we obtain $\mathscr{D}(u_{0})=0$
and $[u_{0},\alpha_{V}(v)]_{V}+\delta\mathscr{D}(\alpha_{V}(v))=0$.
Therefore, $u_{0}\in\mathfrak{z}(\alpha_{V}(V))$ and $\delta=0$.
It follows that $e^{[2]_{L}}=u_{0}+me$.

Assume now that $(e^{*})^{[2]_{L}}=a_{0}+le+\xi e^{*}$, where $a_{0}\in V,l,\xi\in\mathbb{F}$.
For every $v\in V$, we have
\begin{align*}
0= & [(e^{*})^{[2]_{L}},\alpha(v)]+[\alpha(e^{*}),[e^{*},v]]\\
= & [a_{0}+le+\xi e^{*},\alpha_{V}(v)+B_{V}(x_{0},v)e]+[e^{*}+x_{0}+\lambda_{0}e,\mathscr{D}(v)]\\
= & [a_{0},\alpha_{V}(v)]+\xi[e^{*},\alpha_{V}(v)]+[e^{*},\mathscr{D}(v)]+[x_{0},\mathscr{D}(v)]\\
= & [a_{0},\alpha_{V}(v)]_{V}+B_{V}(\mathscr{D}(a_{0}),\alpha_{V}(v))e+\xi\mathscr{D}(\alpha_{V}(v))\\
 & +\mathscr{D}^{2}(v)+[x_{0},\mathscr{D}(v)]_{V}+B_{V}(\mathscr{D}(x_{0}),\mathscr{D}(v))e\\
= & (\mathscr{D}^{2}+\xi\mathscr{D}\circ\alpha_{V}+\mathrm{ad}_{V}(a_{0})\circ\alpha_{V})(v)+B_{V}(\mathscr{D}(a_{0}),\alpha_{V}(v))e.
\end{align*}

Therefore, $\mathscr{D}^{2}=\xi\mathscr{D}\circ\alpha_{V}+\mathrm{ad}_{V}(a_{0})\circ\alpha_{V}$
and $\mathscr{D}(a_{0})=0$.

The proof is complete.
\end{proof}
In Theorem 3.1, we obtain the double extension of an involutive quadratic
Hom-Lie algebra $(V,[\cdot,\cdot]_{V},\alpha_{V},B_{V})$ by means
of a central extension and its derivation. The following theorem will
give the double extension of an involutive multiplicative quadratic
Hom-Lie algebra by means of an involutive multiplicative Hom-Lie algebra.

To this end, we should first give a lemma as follows:

\begin{lemma} Let $(V,[\cdot,\cdot]_{V},\alpha_{V},B_{V})$ be an
involutive quadratic Hom-Lie algebra and $(A,[\cdot,\cdot]_{A},\alpha_{A})$
an involutive Hom-Lie algebra. Let $\phi:A\rightarrow\mathrm{End}_{a}(V,B_{V})$,
$a\mapsto\phi(a)$ be a representation of $A$ on $(V,[\cdot,\cdot]_{V},\alpha_{V})$
such that
\begin{align}
\phi\circ\alpha_{A}(a) & =\alpha_{V}\circ\phi(a)\circ\alpha_{V},\forall a\in A
\end{align}
and $\psi:V\times V\rightarrow A^{*},(x,y)\mapsto\psi(x,y)$ defined
by $\psi(x,y)(a)=B_{V}(\phi(a)x,y),\forall x,y\in V,\forall a\in A$.
Then, we have $\psi(x_{1},x_{2})\circ\alpha_{A}=\psi(\alpha_{V}(x_{1}),\alpha_{V}(x_{2}))$.

\end{lemma}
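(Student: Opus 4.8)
The plan is to verify the claimed equality of functionals pointwise, i.e. to show that $\psi(x_1,x_2)(\alpha_A(a))=\psi(\alpha_V(x_1),\alpha_V(x_2))(a)$ for every $a\in A$ and all $x_1,x_2\in V$; two elements of $A^{*}$ agree iff they agree on every $a\in A$, so this suffices. Everything then reduces to a short chain of substitutions into the defining formula $\psi(x,y)(a)=B_V(\phi(a)x,y)$, so the whole argument is a direct computation rather than an argument requiring genuine ideas.

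First I would expand the left-hand side by the definition of $\psi$, obtaining $\psi(x_1,x_2)(\alpha_A(a))=B_V(\phi(\alpha_A(a))x_1,x_2)$. Next I would invoke the twist-compatibility hypothesis (3.7), which rewrites $\phi(\alpha_A(a))$ as $\alpha_V\circ\phi(a)\circ\alpha_V$, so that the expression becomes $B_V(\alpha_V(\phi(a)(\alpha_V(x_1))),x_2)$.

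The decisive step is to transfer the outer $\alpha_V$ across the bilinear form using the quadratic condition (2.1), namely $B_V(\alpha_V(u),v)=B_V(u,\alpha_V(v))$, applied with $u=\phi(a)(\alpha_V(x_1))$ and $v=x_2$. This yields $B_V(\phi(a)(\alpha_V(x_1)),\alpha_V(x_2))$, which is precisely $\psi(\alpha_V(x_1),\alpha_V(x_2))(a)$ by the definition of $\psi$, and the identity follows.

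There is no substantial obstacle: the computation uses neither the involutivity of $\alpha_A$ nor the skew-symmetry of the values $\phi(a)\in\mathrm{End}_a(V,B_V)$ beyond the fact that each $\phi(a)$ is a genuine endomorphism of $V$. The only point deserving care is the bookkeeping of where $\alpha_V$ acts: hypothesis (3.7) deposits one copy of $\alpha_V$ on each side of $\phi(a)$, the inner copy producing the shifted argument $\alpha_V(x_1)$ and the outer copy being absorbed into the second slot of $B_V$ via (2.1). Keeping this ordering straight is all that the proof requires.
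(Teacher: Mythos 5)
Your proof is correct and is essentially the paper's own argument: both verify the identity pointwise on $a\in A$ using exactly the same three ingredients (the definition of $\psi$, hypothesis (3.7), and the quadratic condition (2.1)), with the only difference being that you traverse the chain of equalities starting from $\psi(x_1,x_2)(\alpha_A(a))$ while the paper starts from $\psi(\alpha_V(x_1),\alpha_V(x_2))(a)$. Your side remark that neither the involutivity of $\alpha_A$ nor the skew-symmetry of $\phi(a)$ is needed is accurate and consistent with the paper's proof.
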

\begin{proof}
For every $a\in A$, we have
\[
\psi(x_{1},x_{2})\circ\alpha_{A}(a)=\psi(x_{1},x_{2})(\alpha_{A}(a))=B_{V}(\phi(\alpha_{A}(a))(x_{1}),x_{2})
\]
and
\begin{align*}
\psi(\alpha_{V}(x_{1}),\alpha_{V}(x_{2}))(a) & =B_{V}(\phi(a)(\alpha_{V}(x_{1})),\alpha_{V}(x_{2}))\\
 & =B_{V}(\alpha_{V}\circ\phi(a)(\alpha_{V}(x_{1})),x_{2})\\
 & =B_{V}(\phi(\alpha_{A}(a))(x_{1}),x_{2}).
\end{align*}

Therefore, $\psi(x_{1},x_{2})\circ\alpha_{A}(a)=\psi(\alpha_{V}(x_{1}),\alpha_{V}(x_{2}))(a)$.
Since $a$ is arbitrary, we obtain $\psi(x_{1},x_{2})\circ\alpha_{A}=\psi(\alpha_{V}(x_{1}),\alpha_{V}(x_{2}))$.
\end{proof}
\begin{theorem} (Involutive Double Extension Theorem) Under assumptions
of Lemma 3.6, suppose that $\phi:A\rightarrow\mathrm{End}_{a}(V,B_{V})$
also satisfies
\begin{align}
\alpha_{V}\circ\phi(a)([x,y]_{V}) & =[\phi(a)\circ\alpha_{V}(x),y]_{V}+[x,\phi(a)\circ\alpha_{V}(y)]_{V},\forall x,y\in V.
\end{align}

(1) There exists an involutive multiplicative Hom-Lie algebra structure
on $L=A^{*}\oplus V\oplus A$, where the bracket $[\cdot,\cdot]:L\times L\rightarrow L$
is defined as follows (for any $f,f'\in A^{*},x,x'\in V,a,a'\in A$):
\begin{align*}
[f+x+a,f'+x'+a']= & f\circ\mathrm{ad}_{A}(a')+f'\circ\mathrm{ad}_{A}(a)+\psi(x,x')+[x,x']_{V}\\
 & +\phi(a)(x')+\phi(a')(x)+[a,a']_{A}
\end{align*}
and a linear map $\alpha:L\rightarrow L$ is defined by
\[
\alpha(f+x+a)=f\circ\alpha_{A}+\alpha_{V}(x)+\alpha_{A}(a).
\]

(2) If the bilinear form $B_{\sigma}:L\times L\rightarrow\mathbb{F}$
is defined by
\[
B_{\sigma}(f+x+a,f'+x'+a')=B_{V}(x,x')+f(a')+f'(a)+\sigma(a,a'),
\]
where $\sigma$ is a symmetric nondegenerate invariant bilinear form
on $(A,[\cdot,\cdot]_{A},\alpha_{A})$ such that
\[
\sigma(\alpha_{A}(a),a')=\sigma(a,\alpha_{A}(a')),\:\forall a,a'\in A,
\]
then the quadruple $(L,[\cdot,\cdot],\alpha,B_{\sigma})$ is an involutive
multiplicative quadratic Hom-Lie algebra.

\end{theorem}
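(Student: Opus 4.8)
The plan is to verify the three defining properties of an involutive multiplicative quadratic Hom-Lie algebra on $L=A^{*}\oplus V\oplus A$ in turn: first that $(L,[\cdot,\cdot],\alpha)$ is a Hom-Lie algebra (skew-symmetry and the Hom-Jacobi identity), then that $\alpha$ is an algebra morphism with $\alpha^{2}=\mathrm{id}$, and finally that $B_{\sigma}$ is symmetric, nondegenerate, invariant, and compatible with $\alpha$ in the sense of Eq. (2.1). The computations are organized by the grading $L=A^{*}\oplus V\oplus A$, checking the relevant identities on each combination of homogeneous components; the ingredients feeding these checks are exactly the hypotheses (\ref{eq:rep-alpha}) type relations from Lemma 3.6, the representation axioms of $\phi$, the extra compatibility in Eq. (3.8), and the skew-symmetry $\phi(a)\in\mathrm{End}_{a}(V,B_{V})$.

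First I would record the structural consequences of the hypotheses that will be used repeatedly. Since $\phi(a)$ is skew-symmetric for $B_{V}$, the map $\psi(x,y)(a)=B_{V}(\phi(a)x,y)$ is skew-symmetric in $(x,y)$, which gives skew-symmetry of the bracket on the $V$-part and symmetry of $B_{\sigma}$. The representation axioms for $\phi$ (Definition 2.5) together with Eq. (3.8) control the mixed $A$-on-$V$ terms in the Jacobi identity, while the invariance of $\sigma$ on $A$ handles the pure $A$-part. For the twist $\alpha$, I would first use Lemma 3.6 to see that $\alpha$ respects $\psi$, namely $\psi(x_{1},x_{2})\circ\alpha_{A}=\psi(\alpha_{V}(x_{1}),\alpha_{V}(x_{2}))$, which is precisely the statement needed so that $\alpha([f+x+a,f'+x'+a'])=[\alpha(f+x+a),\alpha(f'+x'+a')]$ on the $\psi$-component. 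The involution property $\alpha^{2}=\mathrm{id}$ follows termwise from $\alpha_{V}^{2}=\mathrm{id}$, $\alpha_{A}^{2}=\mathrm{id}$, and $(f\circ\alpha_{A})\circ\alpha_{A}=f$.

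For the Hom-Jacobi identity I would split the verification according to how many arguments lie in $A$, in $V$, or in $A^{*}$. The case with all three in $V$ reduces to the Hom-Jacobi identity of $V$ in the $V$-component and, on the $A^{*}$-component, to a cyclic sum of $\psi$-terms that vanishes after invoking Eq. (3.8) and the skew-symmetry of $\phi(a)$; this mirrors the $f,g,h\in V$ computation in the proof of Theorem 3.1. The case with one or two arguments in $A$ uses the representation axioms (1) and (2) of Definition 2.5 to reorganize the $\phi([a,a']_{A})$ and $\phi(\alpha_{A}(a))\phi(a')$ terms, and the case involving $A^{*}$ is dictated by the coadjoint action $f\circ\mathrm{ad}_{A}(a')$, where invariance of $\sigma$ ensures the dual pairing closes correctly.

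\textbf{The main obstacle} I anticipate is the Hom-Jacobi identity in the mixed cases where the three arguments are distributed across $A$, $V$, and $A^{*}$: here the $A^{*}$-valued output couples the coadjoint action on $A^{*}$, the pairing $\psi$ arising from two $V$-arguments, and the invariance of $\sigma$ on $A$, and these three contributions must cancel simultaneously. Getting this cancellation is exactly where Eq. (3.8)—the twisted derivation-type compatibility of $\phi$ with the bracket of $V$—is indispensable, and where the interplay between $\alpha_{V}$, $\alpha_{A}$, and the duality $f\mapsto f\circ\alpha_{A}$ must be tracked with care. Once these mixed terms are shown to vanish, the remaining verifications (symmetry, nondegeneracy, and invariance of $B_{\sigma}$, and Eq. (2.1)) are routine: nondegeneracy follows because $B_{V}|_{V\times V}$ is nondegenerate and the pairing between $A$ and $A^{*}$ is perfect, and invariance is checked componentwise using the definition of $\psi$ and invariance of $\sigma$.
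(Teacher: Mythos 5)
Your overall strategy matches the paper's proof: the paper likewise establishes multiplicativity of $\alpha$ via Lemma 3.6 (the identity $\psi(x_{1},x_{2})\circ\alpha_{A}=\psi(\alpha_{V}(x_{1}),\alpha_{V}(x_{2}))$, together with Eq. (3.7) and the computation $f\circ\alpha_{A}\circ\mathrm{ad}_{A}(\alpha_{A}(a))=f\circ\mathrm{ad}_{A}(a)\circ\alpha_{A}$), checks $\alpha^{2}=\mathrm{id}$ termwise, and verifies Eq. (2.1) for $B_{\sigma}$ componentwise; the Hom-Jacobi identity itself is dispatched in the paper with the remark that it ``is easy to prove,'' so your plan to carry it out case by case according to the grading of $L=A^{*}\oplus V\oplus A$ is, if anything, more complete than the printed proof.

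However, one ingredient in your plan is mis-assigned. You claim that ``the invariance of $\sigma$ on $A$ handles the pure $A$-part'' of the Hom-Jacobi identity, and again that $\sigma$-invariance enters the mixed $A$--$V$--$A^{*}$ cancellation. This cannot be correct: $\sigma$ appears only in the definition of $B_{\sigma}$ in part (2), while the bracket and the twist $\alpha$ on $L$ are defined with no reference to $\sigma$ whatsoever, so no step of the Hom-Jacobi identity can invoke it. The pure $A$-case reduces to the Hom-Jacobi identity of $(A,[\cdot,\cdot]_{A},\alpha_{A})$ itself; the cases with $A^{*}$-valued output (two arguments in $A$ and one in $A^{*}$, or two in $V$ and one in $A$) are settled by evaluating the resulting functionals on an arbitrary element of $A$ and using, respectively, the Hom-Jacobi identity of $A$ together with $\alpha_{A}^{2}=\mathrm{id}$, and the representation axiom $\phi([a,b]_{A})\circ\alpha_{V}=\phi(\alpha_{A}(a))\phi(b)+\phi(\alpha_{A}(b))\phi(a)$ combined with Eqs. (3.7), (3.8), the skew-symmetry of $\phi(\cdot)$, and $B_{V}(\alpha_{V}(u),v)=B_{V}(u,\alpha_{V}(v))$. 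A second, smaller point (a gap you share with the paper): since Section 3 is in characteristic $2$, the axiom to check is $[g,g]=0$, not antisymmetry, and on the $V$-part this amounts to $\psi(x,x)(a)=B_{V}(\phi(a)x,x)=0$; in characteristic $2$ the defining condition of $\mathrm{End}_{a}(V,B_{V})$, namely $B_{V}(\phi(a)x,y)=-B_{V}(x,\phi(a)y)$, is the same as symmetry and does \emph{not} imply this alternating property, so strictly one must strengthen the hypothesis on $\phi$ (in the spirit of the $p=2$ clause of the definition of $\mathscr{D}$-invariance) for the bracket to be well defined as a Hom-Lie bracket.
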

\begin{proof}
(1) Based on the fact that $\phi$ is a representation satisfying
Eqs. (3.7) and (3.8), it's easy to prove that $(L,[\cdot,\cdot],\alpha)$
is a Hom-Lie algebra. Next, we will prove that the Hom-Lie algebra
$(L,[\cdot,\cdot],\alpha)$ is multiplicative and involutive.

For $g_{1}=f_{1}+x_{1}+a_{1},\:g_{2}=f_{2}+x_{2}+a_{2}\in L$, we
have
\begin{align*}
[g_{1},g_{2}]= & [f_{1}+x_{1}+a_{1},f_{2}+x_{2}+a_{2}]\\
= & f_{1}\circ\mathrm{ad}_{A}(a_{2})+f_{2}\circ\mathrm{ad}_{A}(a_{1})+\psi(x_{1},x_{2})+[x_{1},x_{2}]_{V}\\
 & +\phi(a_{1})(x_{2})+\phi(a_{2})(x_{1})+[a_{1},a_{2}]_{A}.
\end{align*}

Therefore,
\begin{align*}
\alpha([g_{1},g_{2}])= & f_{1}\circ\mathrm{ad}_{A}(a_{2})\circ\alpha_{A}+f_{2}\circ\mathrm{ad}_{A}(a_{1})\circ\alpha_{A}+\psi(x_{1},x_{2})\circ\alpha_{A}\\
 & +\alpha_{V}([x_{1},x_{2}]_{V})+\alpha_{V}\circ\phi(a_{1})(x_{2})+\alpha_{V}\circ\phi(a_{2})(x_{1})+\alpha_{A}([a_{1},a_{2}]_{A}).
\end{align*}

On the other hand,
\begin{align*}
 & [\alpha(g_{1}),\alpha(g_{2})]\\
= & [\alpha(f_{1}+x_{1}+a_{1}),\alpha(f_{2}+x_{2}+a_{2})]\\
= & [f_{1}\circ\alpha_{A}+\alpha_{V}(x_{1})+\alpha_{A}(a_{1}),f_{2}\circ\alpha_{A}+\alpha_{V}(x_{2})+\alpha_{A}(a_{2})]\\
= & f_{1}\circ\alpha_{A}\circ\mathrm{ad}_{A}(\alpha_{A}(a_{2}))+f_{2}\circ\alpha_{A}\circ\mathrm{ad}_{A}(\alpha_{A}(a_{1}))+\psi(\alpha_{V}(x_{1}),\alpha_{V}(x_{2}))\\
 & +[\alpha_{V}(x_{1}),\alpha_{V}(x_{2})]_{V}+\phi(\alpha_{A}(a_{1}))(\alpha_{V}(x_{2}))+\phi(\alpha_{A}(a_{2}))(\alpha_{V}(x_{1}))+[\alpha_{A}(a_{1}),\alpha_{A}(a_{2})]_{A}\\
= & f_{1}\circ\alpha_{A}\circ\mathrm{ad}_{A}(\alpha_{A}(a_{2}))+f_{2}\circ\alpha_{A}\circ\mathrm{ad}_{A}(\alpha_{A}(a_{1}))+\psi(x_{1},x_{2})\circ\alpha_{A}+\alpha_{V}([x_{1},x_{2}]_{V})\\
 & +\alpha_{V}\circ\phi(a_{1})(\alpha_{V}^{2}(x_{2}))+\alpha_{V}\circ\phi(a_{2})(\alpha_{V}^{2}(x_{1}))+\alpha_{A}([a_{1},a_{2}]_{A})\\
= & f_{1}\circ\alpha_{A}\circ\mathrm{ad}_{A}(\alpha_{A}(a_{2}))+f_{2}\circ\alpha_{A}\circ\mathrm{ad}_{A}(\alpha_{A}(a_{1}))+\psi(x_{1},x_{2})\circ\alpha_{A}+\alpha_{V}([x_{1},x_{2}]_{V})\\
 & +\alpha_{V}\circ\phi(a_{1})(x_{2})+\alpha_{V}\circ\phi(a_{2})(x_{1})+\alpha_{A}([a_{1},a_{2}]_{A}).
\end{align*}

For every $a\in A$, we have
\begin{align*}
 & f_{1}\circ\alpha_{A}\circ\mathrm{ad}_{A}(\alpha_{A}(a_{2}))(a)\\
= & f_{1}\circ\alpha_{A}([\alpha_{A}(a_{2}),a]_{A})=f_{1}([\alpha_{A}^{2}(a_{2}),\alpha_{A}(a)]_{A})\\
= & f_{1}([a_{2},\alpha_{A}(a)]_{A})=f_{1}\circ\mathrm{ad}_{A}(a_{2})\circ\alpha_{A}(a).
\end{align*}

It follows that $f_{1}\circ\alpha_{A}\circ\mathrm{ad}_{A}(\alpha_{A}(a_{2}))=f_{1}\circ\mathrm{ad}_{A}(a_{2})\circ\alpha_{A}$.
Similarly, we have $f_{2}\circ\alpha_{A}\circ\mathrm{ad}_{A}(\alpha_{A}(a_{1}))=f_{2}\circ\mathrm{ad}_{A}(a_{1})\circ\alpha_{A}$.
Therefore, $\alpha([g_{1},g_{2}])=[\alpha(g_{1}),\alpha(g_{2})]$.

Furthermore, for any $g=f+x+a\in L$, we have
\begin{align*}
\alpha^{2}(g)= & \alpha(\alpha(f+x+a))=\alpha(f\circ\alpha_{A}+\alpha_{V}(x)+\alpha_{A}(a))=f\circ\alpha_{A}\circ\alpha_{A}+\alpha_{V}^{2}(x)+\alpha_{A}^{2}(a)\\
= & f\circ(\alpha_{A}^{2})+x+a=f+x+a=g.
\end{align*}

It follows that $\alpha^{2}=\mathrm{id}$. Therefore, the triple $(L,[\cdot,\cdot],\alpha)$
is an involutive multiplicative Hom-Lie algebra.

(2) Clearly, the bilinear form $B_{\sigma}$ is symmetric, nondegenerate
and invariant. Next, we will prove that $B_{\sigma}$ satisfies Eq.
(2.1).

Indeed, for $g_{1}=f_{1}+x_{1}+a_{1},\:g_{2}=f_{2}+x_{2}+a_{2}\in L$,
we have
\begin{align*}
B_{\sigma}(\alpha(g_{1}),g_{2}) & =B_{\sigma}(\alpha(f_{1}+x_{1}+a_{1}),f_{2}+x_{2}+a_{2})\\
 & =B_{\sigma}(f_{1}\circ\alpha_{A}+\alpha_{V}(x_{1})+\alpha_{A}(a_{1}),f_{2}+x_{2}+a_{2})\\
 & =B_{V}(\alpha_{V}(x_{1}),x_{2})+f_{1}\circ\alpha_{A}(a_{2})+f_{2}\circ\alpha_{A}(a_{1})+\sigma(\alpha_{A}(a_{1}),a_{2})
\end{align*}
and
\begin{align*}
B_{\sigma}(g_{1},\alpha(g_{2})) & =B_{\sigma}(f_{1}+x_{1}+a_{1},\alpha(f_{2}+x_{2}+a_{2}))\\
 & =B_{\sigma}(f_{1}+x_{1}+a_{1},f_{2}\circ\alpha_{A}+\alpha_{V}(x_{2})+\alpha_{A}(a_{2}))\\
 & =B_{V}(x_{1},\alpha_{V}(x_{2}))+f_{1}\circ\alpha_{A}(a_{2})+f_{2}\circ\alpha_{A}(a_{1})+\sigma(a_{1},\alpha_{A}(a_{2}))\\
 & =B_{V}(\alpha_{V}(x_{1}),x_{2})+f_{1}\circ\alpha_{A}(a_{2})+f_{2}\circ\alpha_{A}(a_{1})+\sigma(\alpha_{A}(a_{1}),a_{2}).
\end{align*}

Therefore, $B_{\sigma}(\alpha(g_{1}),g_{2})=B_{\sigma}(g_{1},\alpha(g_{2}))$,
which implies that $B_{\sigma}$ is a quadratic structure on $(L,[\cdot,\cdot],\alpha)$
and the quadruple $(L,[\cdot,\cdot],\alpha,B_{\sigma})$ is an involutive
multiplicative quadratic Hom-Lie algebra.
\end{proof}
Next, we will study the isomorphism between double extensions of the
quadratic Hom-Lie algebra $(V,[\cdot,\cdot]_{V},\alpha_{V},B_{V})$.
In Theorem 3.1, a double extension of $V$ by means of $(\mathscr{D},x_{0},\lambda,\lambda_{0})$
is constructed, which is denoted by $L=\mathscr{E}^{*}\oplus V\oplus\mathscr{E}$.
Now, let $\widetilde{L}=\mathscr{\widetilde{E}}^{*}\oplus V\oplus\mathscr{\widetilde{E}}$
be another double extension of $V$ by means of $(\mathscr{\widetilde{D}},\widetilde{x}_{0},\widetilde{\lambda},\widetilde{\lambda}_{0})$,
where $\mathscr{\widetilde{D}}\in\mathrm{Der}_{\alpha_{V}}(V)$, $\mathscr{\widetilde{E}}=\mathrm{Span}\{\widetilde{e}\}$,
$\mathscr{\widetilde{E}}^{*}=\mathrm{Span}\{\widetilde{e}^{*}\}$
for $\widetilde{e}^{*}=\widetilde{\mathscr{D}}$, $\widetilde{x}_{0}\in V$
and $\widetilde{\lambda},\widetilde{\lambda}_{0}\in\mathbb{F}$. An
adapted isomorphism between $L$ and $\widetilde{L}$ is a bijection
$\pi:L\rightarrow\widetilde{L}$ which satisfies (for any $f,g\in L$):
\begin{align}
\pi([f,g]_{\mathfrak{g}}) & =[\pi(f),\pi(g)]_{\widetilde{\mathfrak{g}}},\\
B_{\mathfrak{\widetilde{g}}}(\pi(f),\pi(g)) & =B_{\mathfrak{g}}(f,g),\\
\pi(\alpha(f)) & =\widetilde{\alpha}(\pi(f)),\\
\pi(\mathscr{E}\oplus V) & =\mathscr{\widetilde{E}}\oplus V.
\end{align}

Let $\mathrm{pr}:\mathscr{\widetilde{E}}\oplus V\rightarrow V$ be
a projection and $\pi_{0}:=\mathrm{pr}\circ\pi$. Then $\pi_{0}$
is a linear map on $V$. Be similar to \citep{BeBou}, it's easy to
prove that for any $u,v\in V$,
\begin{equation}
\pi_{0}([u,v]_{V})=[\pi_{0}(u),\pi_{0}(v)]_{V}
\end{equation}
and
\begin{equation}
B_{V}(\pi_{0}(u),\pi_{0}(v))=B_{V}(u,v).
\end{equation}

Moreover, be similar to \citep{BeBou}, according to the conditions
(3.10), (3.11) and (3.13), we obtain the adapted isomorphism $\pi:\mathscr{E}^{*}\oplus V\oplus\mathscr{E}\rightarrow\mathscr{\widetilde{E}}^{*}\oplus V\oplus\mathscr{\widetilde{E}}$
as follows:
\begin{align*}
\pi(u) & =\pi_{0}(u)+B_{V}(t_{\pi},u)\widetilde{e},\:\forall u\in V,\\
\pi(e) & =\gamma\widetilde{e},\\
\pi(e^{*}) & =\gamma^{-1}(\widetilde{e}^{*}+\pi_{0}(t_{\pi}))+\nu\widetilde{e},
\end{align*}
where $t_{\pi}\in V$ (depending only on $\pi$), $\gamma\in\mathbb{F}$
and $\nu\in\mathbb{F}$ is arbitrary.

Based on the conditions (3.9), (3.10) and (3.12), we can also obtain
\[
\pi_{0}^{-1}\mathscr{\widetilde{D}}\pi_{0}=\gamma\mathscr{D}+\mathrm{ad}_{V}(t_{\pi})
\]
and
\[
B_{\mathfrak{g}}(e^{*},e^{*})=B_{V}(t_{\pi},t_{\pi})+\gamma^{-2}B_{\mathfrak{\widetilde{g}}}(\widetilde{e}^{*},\widetilde{e}^{*}).
\]

Next, the condition (3.11) should be considered additionally so that
$\pi:L\rightarrow\widetilde{L}$ is an adapted isomorphism.

It follows from $\pi(\alpha(e^{*}))=\widetilde{\alpha}(\pi(e^{*}))$
that
\begin{align*}
\gamma^{-1}\widetilde{\lambda} & =\gamma^{-1}\lambda,\\
\gamma^{-1}\alpha_{V}(\pi_{0}(t_{\pi}))+\gamma^{-1}\widetilde{x}_{0} & =\gamma^{-1}\lambda\pi_{0}(t_{\pi})+\pi_{0}(x_{0}),
\end{align*}
and
\[
\widetilde{\lambda}\nu+\gamma^{-1}B_{V}(\widetilde{x}_{0},\pi_{0}(t_{\pi}))+\gamma^{-1}\widetilde{\lambda}_{0}=\lambda\nu+B_{V}(x_{0},t_{\pi})+\gamma\lambda_{0}.
\]

Therefore, we obtain

\begin{align*}
\widetilde{\lambda} & =\lambda,\\
\pi_{0}(\alpha_{V}(t_{\pi})+\lambda t_{\pi}) & =\gamma\pi_{0}(x_{0})+\widetilde{x}_{0},
\end{align*}
and
\[
B_{V}(\widetilde{x}_{0},\pi_{0}(t_{\pi}))+\gamma B_{V}(x_{0},t_{\pi})=\widetilde{\lambda}_{0}+\gamma^{2}\lambda_{0}.
\]

Since $\pi(\alpha(v))=\widetilde{\alpha}(\pi(v))$, we have
\begin{equation}
\alpha_{V}(\pi_{0}(v))=\pi_{0}(\alpha_{V}(v))
\end{equation}
and
\[
B_{V}(\widetilde{x}_{0},\pi_{0}(v))+\gamma B_{V}(x_{0},v))=B_{V}(t_{\pi},\alpha_{V}(v)+\lambda v).
\]

Now, Eqs. $(3.13)-(3.15)$ imply that $\pi_{0}$ is an automorphism
on $V$.

We arrive at the following theorem.

\begin{theorem} Let $B_{V}$ be $\mathscr{D}$- and $\mathscr{\widetilde{D}}$-invariant,
where $\mathscr{D},\mathscr{\widetilde{D}}\in\mathrm{Der}_{\alpha_{V}}(V)$.
Then there is an adapted isomorphism $\pi:L\rightarrow\widetilde{L}$
if and only if there exists exactly an automorphism $\pi_{0}:V\rightarrow V$,
a scalar $\gamma\in\mathbb{F}$\textbackslash$\{0\}$ and $t_{\pi}\in V$
(depending only on $\pi$) such that

\begin{align*}
\pi_{0}^{-1}\mathscr{\widetilde{D}}\pi_{0} & =\gamma\mathscr{D}+\mathrm{ad}_{V}(t_{\pi}),\\
B_{\mathfrak{g}}(e^{*},e^{*}) & =B_{V}(t_{\pi},t_{\pi})+\gamma^{-2}B_{\mathfrak{\widetilde{g}}}(\widetilde{e}^{*},\widetilde{e}^{*}),\\
\widetilde{\lambda} & =\lambda,\\
\pi_{0}(\alpha_{V}(t_{\pi})+\lambda t_{\pi}) & =\gamma\pi_{0}(x_{0})+\widetilde{x}_{0},\\
B_{V}(\widetilde{x}_{0},\pi_{0}(t_{\pi}))+\gamma B_{V}(x_{0},t_{\pi}) & =\widetilde{\lambda}_{0}+\gamma^{2}\lambda_{0},\\
B_{V}(\widetilde{x}_{0},\pi_{0}(v))+\gamma B_{V}(x_{0},v) & =B_{V}(t_{\pi},\alpha_{V}(v)+\lambda v)
\end{align*}
and

\begin{align*}
\pi(u) & =\pi_{0}(u)+B_{V}(t_{\pi},u)\widetilde{e},\:\forall u\in V,\\
\pi(e) & =\gamma\widetilde{e},\\
\pi(e^{*}) & =\gamma^{-1}(\widetilde{e}^{*}+\pi_{0}(t_{\pi}))+\nu\widetilde{e},
\end{align*}
where $\nu$ is arbitrary.

\end{theorem}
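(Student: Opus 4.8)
The plan is to establish the equivalence by proving both implications, making full use of the computations already assembled in the discussion preceding the statement. For necessity I would begin with an adapted isomorphism $\pi:L\to\widetilde{L}$ and \emph{read off} the data $(\pi_0,\gamma,t_\pi)$, verifying along the way that each is uniquely determined by $\pi$; for sufficiency I would take data satisfying the six displayed relations, \emph{define} $\pi$ by the three displayed formulas, and check the adapted-isomorphism axioms (3.9)--(3.12) in turn.

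For necessity, set $\pi_0:=\mathrm{pr}\circ\pi|_V$. The subspace condition (3.12) together with $e\in\mathfrak{z}(L)$ forces $\pi(e)=\gamma\widetilde{e}$ for a unique $\gamma\in\mathbb{F}$, and bijectivity of $\pi$ gives $\gamma\neq0$; imposing $B_{\widetilde{\mathfrak{g}}}(\pi(e^{*}),\pi(e))=B_{\mathfrak{g}}(e^{*},e)=1$ and using the form-preservation (3.10) pins down $\pi(e^{*})=\gamma^{-1}(\widetilde{e}^{*}+\pi_0(t_\pi))+\nu\widetilde{e}$ for a unique $t_\pi\in V$ and arbitrary $\nu$, recovering the three formulas for $\pi$. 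As recorded before the statement, (3.9) and (3.10) force $\pi_0$ to preserve $[\cdot,\cdot]_V$ and $B_V$, and (3.11) gives (3.15), so $\pi_0$ is an automorphism of $V$. The six remaining relations then fall out one by one: applying (3.9) to $[e^{*},u]=\mathscr{D}(u)$ and comparing $V$-components yields $\pi_0^{-1}\widetilde{\mathscr{D}}\pi_0=\gamma\mathscr{D}+\mathrm{ad}_V(t_\pi)$; applying (3.10) to the pair $(e^{*},e^{*})$ yields the relation for $B_{\mathfrak{g}}(e^{*},e^{*})$; and evaluating $\pi(\alpha(e^{*}))=\widetilde{\alpha}(\pi(e^{*}))$ and $\pi(\alpha(v))=\widetilde{\alpha}(\pi(v))$ componentwise, together with $B_V$-preservation, produces $\widetilde{\lambda}=\lambda$, the twist relation $\pi_0(\alpha_V(t_\pi)+\lambda t_\pi)=\gamma\pi_0(x_0)+\widetilde{x}_0$, and the two $B_V$-relations.

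For sufficiency I would define $\pi$ by the three formulas and first check it is a bijection: since $\gamma\neq0$ and $\pi_0$ is an automorphism, $\pi$ is triangular and invertible with respect to the decomposition $\mathscr{E}^{*}\oplus V\oplus\mathscr{E}$, while (3.12) is immediate from the formulas for $\pi(u)$ and $\pi(e)$. Conditions (3.11) and (3.10) then reduce, after expanding $\pi(\alpha(\cdot))$ versus $\widetilde{\alpha}(\pi(\cdot))$ and $B_{\widetilde{\mathfrak{g}}}(\pi(\cdot),\pi(\cdot))$ versus $B_{\mathfrak{g}}(\cdot,\cdot)$ on the three types of elements $u\in V$, $e$ and $e^{*}$, to exactly the hypotheses $\widetilde{\lambda}=\lambda$, the two twist relations, the $B_{\mathfrak{g}}(e^{*},e^{*})$ relation and the two $B_V$-relations; these are bookkeeping checks using that $\pi_0$ commutes with $\alpha_V$ and preserves $B_V$.

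The crux is the bracket axiom (3.9), specifically the case $u,v\in V$ where the central cocycle appears. Here $[\pi(u),\pi(v)]_{\widetilde{L}}=\pi_0([u,v]_V)+B_V(\widetilde{\mathscr{D}}(\pi_0(u)),\pi_0(v))\widetilde{e}$, while $\pi([u,v]_V+B_V(\mathscr{D}(u),v)e)=\pi_0([u,v]_V)+\bigl(B_V(t_\pi,[u,v]_V)+\gamma B_V(\mathscr{D}(u),v)\bigr)\widetilde{e}$, so I must match the coefficients of $\widetilde{e}$. I would substitute the derivation relation in the form $\widetilde{\mathscr{D}}\pi_0=\pi_0(\gamma\mathscr{D}+\mathrm{ad}_V(t_\pi))$ and then use $B_V$-preservation and the invariance $B_V([t_\pi,u]_V,v)=B_V(t_\pi,[u,v]_V)$ to compute $B_V(\widetilde{\mathscr{D}}(\pi_0(u)),\pi_0(v))=\gamma B_V(\mathscr{D}(u),v)+B_V(t_\pi,[u,v]_V)$, which closes the identity. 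The mixed brackets $[e^{*},u]$ and $[e^{*},e^{*}]$, and every bracket involving the central $e$, are comparatively routine. Uniqueness of $(\pi_0,\gamma,t_\pi)$ needs no separate argument, as each datum was extracted directly from $\pi$ in the necessity step.
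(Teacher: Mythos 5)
Your proposal is correct, and its necessity half is essentially the paper's own argument: the paper's proof of this theorem \emph{is} the discussion surrounding Eqs.\ (3.9)--(3.15), which extracts $\pi_{0}$, $\gamma$, $t_{\pi}$ from an adapted isomorphism exactly as you do (with the bracket- and form-preservation of $\pi_{0}$ and the shape of $\pi$ delegated to the analogy with Benayadi--Bouarroudj, and the six relations read off from (3.9)--(3.12) applied to $[e^{*},u]$, to the pair $(e^{*},e^{*})$, and to the twist condition on $e^{*}$ and on $v$). Where you go beyond the paper is the sufficiency direction: the theorem is stated as an equivalence, but the paper never verifies that the displayed formulas actually define an adapted isomorphism, whereas you isolate the only genuinely non-routine check --- matching the $\widetilde{e}$-coefficients in $\pi([u,v])=[\pi(u),\pi(v)]$ --- and close it correctly via $\widetilde{\mathscr{D}}\pi_{0}=\pi_{0}(\gamma\mathscr{D}+\mathrm{ad}_{V}(t_{\pi}))$, $B_{V}$-preservation of $\pi_{0}$ and invariance of $B_{V}$, yielding $B_{V}(\widetilde{\mathscr{D}}(\pi_{0}(u)),\pi_{0}(v))=\gamma B_{V}(\mathscr{D}(u),v)+B_{V}(t_{\pi},[u,v]_{V})$. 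This is a worthwhile completion of what the paper leaves implicit.

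One small repair is needed in your necessity step. The claim that (3.12) together with $e\in\mathfrak{z}(L)$ forces $\pi(e)\in\widetilde{\mathscr{E}}$ does not hold as stated: $\pi(e)$ lands in $\mathfrak{z}(\widetilde{L})\cap(\widetilde{\mathscr{E}}\oplus V)$, which can be strictly larger than $\widetilde{\mathscr{E}}$, since any $v\in\mathfrak{z}(V)$ with $\widetilde{\mathscr{D}}(v)=0$ is central in $\widetilde{L}$ (its cocycle term vanishes because $B_{V}(\widetilde{\mathscr{D}}(u),v)=B_{V}(u,\widetilde{\mathscr{D}}(v))=0$ in characteristic $2$). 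The correct forcing uses (3.10) instead of centrality: since $\mathscr{E}=(\mathscr{E}\oplus V)^{\perp}$ and likewise $\widetilde{\mathscr{E}}=(\widetilde{\mathscr{E}}\oplus V)^{\perp}$, form preservation together with (3.12) gives $\pi(\mathscr{E})=\bigl(\pi(\mathscr{E}\oplus V)\bigr)^{\perp}=(\widetilde{\mathscr{E}}\oplus V)^{\perp}=\widetilde{\mathscr{E}}$, and bijectivity gives $\gamma\neq0$. With that substitution (and noting that the uniqueness of $t_{\pi}$ should formally be stated after $\pi_{0}$ is known to be bijective, which your own ordering of (3.13)--(3.15) provides), the argument is sound.
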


Let $(V,[\cdot,\cdot]_{V},\alpha_{V},B_{V})$ be a restricted Hom-Lie
algebra and $[2]_{V}$ a $2$-structure on $V$. According to Theorem
3.4, the $2$-structure $[2]_{V}$ can be extended to any double extension
of $V$. For the double extension $L$ (resp. $\widetilde{L}$) of
$V$, $[2]_{L}$ (resp. $[2]_{\widetilde{L}}$) denotes the $2$-structure
on $L$ (resp. $\widetilde{L}$) written in terms of $m,u_{0},a_{0},l,\xi$
and $\mathscr{P}$ (resp. $\widetilde{m},\widetilde{u}_{0},\widetilde{a}_{0},\widetilde{l},\widetilde{\xi}$
and $\mathscr{\widetilde{P}}$). We will study the equivalence class
of $2$-structures on double extensions.

\begin{theorem} The adapted isomorphism $\pi:L\rightarrow\widetilde{L}$
given in Theorem 3.8 is restricted if and only if

\[
\pi_{0}(u^{[2]_{V}})=\pi_{0}(u)^{[2]_{V}}+B_{V}(t_{\pi},u)^{2}\widetilde{u}_{0}
\]
and
\begin{align*}
\mathscr{\widetilde{P}}\circ\pi_{0} & =\gamma\mathscr{P}+B_{V}(t_{\pi},(\cdot)^{[2]_{V}})+B_{V}(t_{\pi},\cdot)^{2}\widetilde{m},\\
\widetilde{a}_{0} & =\gamma^{2}(\pi_{0}(a_{0})+\gamma^{-1}\xi\pi_{0}(t_{\pi}))+\gamma^{2}\nu^{2}\widetilde{u}_{0}+\mathscr{\widetilde{D}}(\pi_{0}(t_{\pi}))+\pi_{0}(t_{\pi})^{[2]_{V}},\\
\widetilde{l} & =\gamma^{2}(B_{V}(t_{\pi},a_{0})+\gamma l+\nu\xi)+\mathscr{\widetilde{P}}(\pi_{0}(t_{\pi}))+\nu^{2}\widetilde{m},\\
\widetilde{\xi} & =\gamma\xi,\\
\widetilde{m} & =\gamma^{-2}(\gamma m+B_{V}(t_{\pi},u_{0}))\\
\widetilde{u}_{0} & =\gamma^{-2}\pi_{0}(u_{0}).
\end{align*}

\end{theorem}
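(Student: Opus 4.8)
The plan is to unwind Definition 2.12: $\pi$ is restricted precisely when $\pi(g^{[2]_{L}})=\pi(g)^{[2]_{\widetilde{L}}}$ for every $g\in L$, and I want to show this is equivalent to the displayed system of relations. The first step is to reduce the verification to the three directions $u\in V$, $e$ and $e^{*}$. Indeed, for $g=je^{*}+v+ke$, axiom (R3) at $p=2$, which in this setting reads $(x+y)^{[2]}=x^{[2]}+y^{[2]}+[x,y]$ with the untwisted bracket, together with (R2) expands $g^{[2]_{L}}$ into the pure squares $j^{2}(e^{*})^{[2]_{L}}$, $v^{[2]_{L}}$, $k^{2}e^{[2]_{L}}$ plus three cross-brackets, and the identical expansion applies to $\pi(g)^{[2]_{\widetilde{L}}}$. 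Since $\pi$ preserves brackets (condition (3.9)), the cross-bracket terms agree on the two sides, so $\pi(g^{[2]_{L}})=\pi(g)^{[2]_{\widetilde{L}}}$ holds for all $g$ if and only if it holds on each of $V$, $e$ and $e^{*}$ separately.

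For $u\in V$ I would insert $u^{[2]_{L}}=u^{[2]_{V}}+\mathscr{P}(u)e$ and the formula for $\pi$, obtaining $\pi(u^{[2]_{L}})=\pi_{0}(u^{[2]_{V}})+\bigl(B_{V}(t_{\pi},u^{[2]_{V}})+\gamma\mathscr{P}(u)\bigr)\widetilde{e}$. On the other side $\pi(u)=\pi_{0}(u)+B_{V}(t_{\pi},u)\widetilde{e}$ splits as a $V$-term plus a central term, so (R3) (with $[\pi_{0}(u),\widetilde{e}]=0$) and (R2) give $\pi(u)^{[2]_{\widetilde{L}}}=\pi_{0}(u)^{[2]_{V}}+\widetilde{\mathscr{P}}(\pi_{0}(u))\widetilde{e}+B_{V}(t_{\pi},u)^{2}(\widetilde{m}\widetilde{e}+\widetilde{u}_{0})$. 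Equating $V$-components yields $\pi_{0}(u^{[2]_{V}})=\pi_{0}(u)^{[2]_{V}}+B_{V}(t_{\pi},u)^{2}\widetilde{u}_{0}$, and equating the coefficients of $\widetilde{e}$ yields the displayed formula for $\widetilde{\mathscr{P}}\circ\pi_{0}$.

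The direction $g=e$ is immediate: from $e^{[2]_{L}}=me+u_{0}$ and $\pi(e)=\gamma\widetilde{e}$ one equates $\pi(me+u_{0})$ with $\gamma^{2}\widetilde{e}^{[2]_{\widetilde{L}}}=\gamma^{2}(\widetilde{m}\widetilde{e}+\widetilde{u}_{0})$; the $V$- and $\widetilde{e}$-components give $\widetilde{u}_{0}=\gamma^{-2}\pi_{0}(u_{0})$ and $\widetilde{m}=\gamma^{-2}(\gamma m+B_{V}(t_{\pi},u_{0}))$. The direction $g=e^{*}$ is where the work lies, because $\pi(e^{*})=\gamma^{-1}\widetilde{e}^{*}+\gamma^{-1}\pi_{0}(t_{\pi})+\nu\widetilde{e}$ already meets all three directions. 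I would expand $\pi(e^{*})^{[2]_{\widetilde{L}}}$ by the three-term form of (R3), producing the squares $\gamma^{-2}(\widetilde{e}^{*})^{[2]_{\widetilde{L}}}$, $\gamma^{-2}\pi_{0}(t_{\pi})^{[2]_{\widetilde{L}}}$ and $\nu^{2}\widetilde{e}^{[2]_{\widetilde{L}}}$ together with three cross-brackets, of which only $[\gamma^{-1}\widetilde{e}^{*},\gamma^{-1}\pi_{0}(t_{\pi})]=\gamma^{-2}\widetilde{\mathscr{D}}(\pi_{0}(t_{\pi}))$ survives, the other two vanishing because $\widetilde{e}$ is central. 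Matching this against $\pi((e^{*})^{[2]_{L}})=\pi(a_{0})+l\pi(e)+\xi\pi(e^{*})$, expanded via the formulas for $\pi$, componentwise in $\widetilde{e}^{*}$, $V$ and $\widetilde{e}$ gives respectively $\widetilde{\xi}=\gamma\xi$, the formula for $\widetilde{a}_{0}$ and the formula for $\widetilde{l}$.

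The main obstacle is exactly the bookkeeping in the $e^{*}$ case: one must substitute $(\widetilde{e}^{*})^{[2]_{\widetilde{L}}}=\widetilde{a}_{0}+\widetilde{l}\widetilde{e}+\widetilde{\xi}\widetilde{e}^{*}$ and $\pi_{0}(t_{\pi})^{[2]_{\widetilde{L}}}=\pi_{0}(t_{\pi})^{[2]_{V}}+\widetilde{\mathscr{P}}(\pi_{0}(t_{\pi}))\widetilde{e}$ into the three-term expansion, keep track of which cross-brackets die because $\widetilde{e}$ is central, and, working in characteristic $2$, absorb all signs. To rewrite the $V$-components into the stated shape I would also invoke the relations already recorded for the adapted isomorphism in Theorem 3.8, namely $\widetilde{\lambda}=\lambda$, $\alpha_{V}\pi_{0}=\pi_{0}\alpha_{V}$ and $\pi_{0}^{-1}\widetilde{\mathscr{D}}\pi_{0}=\gamma\mathscr{D}+\mathrm{ad}_{V}(t_{\pi})$. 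The converse direction requires no new computation: assuming the displayed relations, the same componentwise identities hold on $V$, on $e$ and on $e^{*}$, and the reduction of the first paragraph reassembles them into $\pi(g^{[2]_{L}})=\pi(g)^{[2]_{\widetilde{L}}}$ for every $g\in L$, so $\pi$ is restricted.
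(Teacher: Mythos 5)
Your proposal is correct and follows essentially the same route as the paper: expand $\pi(g^{[2]_{L}})$ and $\pi(g)^{[2]_{\widetilde{L}}}$ on the three directions $u\in V$, $e$ and $e^{*}$ using the explicit formulas for $\pi$, the $2$-structures and the centrality of $\widetilde{e}$, then equate the $V$-, $\widetilde{e}$- and $\widetilde{e}^{*}$-components to read off each relation. The only difference is that your opening reduction (via (R2), the $p=2$ form of (R3) and bracket-preservation) spells out why checking these three directions suffices, which is exactly the content the paper compresses into ``($\Leftarrow$) Obviously.''
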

\begin{proof}
($\Leftarrow$) Obviously.

($\Rightarrow$) Assume that $\pi$ is a restricted isomorphism.

For every $u\in V$, we have

\begin{align*}
\pi(u^{[2]_{\mathfrak{g}}}) & =\pi(u^{[2]_{V}}+\mathscr{P}(u)e)=\pi(u^{[2]_{V}})+\mathscr{P}(u)\pi(e)\\
 & =\pi_{0}(u^{[2]_{V}})+B_{V}(t_{\pi},u^{[2]_{V}})\widetilde{e}+\mathscr{P}(u)\gamma\widetilde{e}\\
 & =\pi_{0}(u^{[2]_{V}})+(B_{V}(t_{\pi},u^{[2]_{V}})+\mathscr{P}(u)\gamma\text{)}\widetilde{e}
\end{align*}
and

\begin{align*}
(\pi(u))^{[2]_{\mathfrak{\widetilde{g}}}} & =(\pi_{0}(u)+B_{V}(t_{\pi},u)\widetilde{e})^{[2]_{\mathfrak{\widetilde{g}}}}\\
 & =(\pi_{0}(u))^{[2]_{\mathfrak{\widetilde{g}}}}+B_{V}(t_{\pi},u)^{2}\widetilde{e}{}^{[2]_{\mathfrak{\widetilde{g}}}}+[\pi_{0}(u),B_{V}(t_{\pi},u)\widetilde{e}]\\
 & =(\pi_{0}(u))^{[2]_{V}}+\mathscr{\widetilde{P}}(\pi_{0}(u))\widetilde{e}+B_{V}(t_{\pi},u)^{2}(\widetilde{m}\widetilde{e}+\widetilde{u}_{0})\\
 & =((\pi_{0}(u))^{[2]_{V}}+B_{V}(t_{\pi},u)^{2}\widetilde{u}_{0})+(\mathscr{\widetilde{P}}(\pi_{0}(u))+B_{V}(t_{\pi},u)^{2}\widetilde{m})\widetilde{e}.
\end{align*}

It follows from $\pi(u^{[2]_{\mathfrak{g}}})=(\pi(u))^{[2]_{\mathfrak{\widetilde{g}}}}$
that $\pi_{0}(u^{[2]_{V}})=\pi_{0}(u)^{[2]_{V}}+B_{V}(t_{\pi},u)^{2}\widetilde{u}_{0}$
and $\mathscr{\widetilde{P}}\circ\pi_{0}=\gamma\mathscr{P}+B_{V}(t_{\pi},(\cdot)^{[2]_{V}})+B_{V}(t_{\pi},\cdot)^{2}\widetilde{m}$.

Moreover, we have
\begin{align*}
\pi((e^{*})^{[2]_{\mathfrak{g}}}) & =\pi(a_{0}+le+\xi e^{*})=\pi(a_{0})+l\pi(e)+\xi\pi(e^{*})\\
 & =\pi_{0}(a_{0})+B_{V}(t_{\pi},a_{0})\widetilde{e}+l\gamma\widetilde{e}+\xi(\gamma^{-1}(\widetilde{e}^{*}+\pi_{0}(t_{\pi}))+\nu\widetilde{e})\\
 & =(\pi_{0}(a_{0})+\xi\gamma^{-1}\pi_{0}(t_{\pi}))+(l\gamma+\nu\xi+B_{V}(t_{\pi},a_{0}))\widetilde{e}++\xi\gamma^{-1}\widetilde{e}^{*}
\end{align*}
and
\begin{align*}
(\pi(e^{*}))^{[2]_{\mathfrak{\widetilde{g}}}}= & (\gamma^{-1}(\widetilde{e}^{*}+\pi_{0}(t_{\pi}))+\nu\widetilde{e})^{[2]_{\mathfrak{\widetilde{g}}}}\\
= & \gamma^{-2}(\widetilde{e}^{*}+\pi_{0}(t_{\pi}))^{[2]_{\mathfrak{\widetilde{g}}}}+\nu^{2}\widetilde{e}{}^{[2]_{\mathfrak{\widetilde{g}}}}+[\gamma^{-1}(\widetilde{e}^{*}+\pi_{0}(t_{\pi})),\nu\widetilde{e}]\\
= & \gamma^{-2}(\widetilde{e}^{*})^{[2]_{\mathfrak{\widetilde{g}}}}+\gamma^{-2}\pi_{0}(t_{\pi})^{[2]_{\mathfrak{\widetilde{g}}}}+\gamma^{-2}[\widetilde{e}^{*},\pi_{0}(t_{\pi})]+\nu^{2}\widetilde{e}{}^{[2]_{\mathfrak{\widetilde{g}}}}\\
= & \gamma^{-2}(\widetilde{a}_{0}+\widetilde{l}\widetilde{e}+\widetilde{\xi}\widetilde{e}^{*})+\gamma^{-2}(\pi_{0}(t_{\pi})^{[2]_{V}}+\widetilde{\mathscr{P}}(\pi_{0}(t_{\pi}))\widetilde{e})\\
 & +\gamma^{-2}\mathscr{\widetilde{D}}(\pi_{0}(t_{\pi}))+\nu^{2}\widetilde{m}\widetilde{e}+\nu^{2}\widetilde{u}_{0}\\
= & (\gamma^{-2}\widetilde{a}_{0}+\nu^{2}\widetilde{u}_{0}+\gamma^{-2}\mathscr{\widetilde{D}}(\pi_{0}(t_{\pi}))+\gamma^{-2}\pi_{0}(t_{\pi})^{[2]_{V}})\\
 & +(\gamma^{-2}\widetilde{l}+\gamma^{-2}\widetilde{\mathscr{P}}(\pi_{0}(t_{\pi}))+\nu^{2}\widetilde{m})\widetilde{e}+\gamma^{-2}\widetilde{\xi}\widetilde{e}^{*}.
\end{align*}

Since $\pi((e^{*})^{[2]_{\mathfrak{g}}})=(\pi(e^{*}))^{[2]_{\mathfrak{\widetilde{g}}}}$,
we obtain
\begin{align*}
\widetilde{\xi} & =\gamma\xi,\\
\widetilde{l} & =\gamma^{2}(B_{V}(t_{\pi},a_{0})+\gamma l+\nu\xi)+\mathscr{\widetilde{P}}(\pi_{0}(t_{\pi}))+\nu^{2}\widetilde{m}
\end{align*}
and
\[
\widetilde{a}_{0}=\gamma^{2}(\pi_{0}(a_{0})+\gamma^{-1}\xi\pi_{0}(t_{\pi}))+\gamma^{2}\nu^{2}\widetilde{u}_{0}+\mathscr{\widetilde{D}}(\pi_{0}(t_{\pi}))+\pi_{0}(t_{\pi})^{[2]_{V}}.
\]

Finally, since
\begin{align*}
0 & =\pi(e^{[2]_{\mathfrak{g}}})+(\pi(e))^{[2]_{\mathfrak{\widetilde{g}}}}=\pi(me+u_{0})+(\gamma\widetilde{e})^{[2]_{\mathfrak{\widetilde{g}}}}\\
 & =m\pi(e)+\pi(u_{0})+\gamma^{2}\widetilde{e}{}^{[2]_{\mathfrak{\widetilde{g}}}}=m\gamma\widetilde{e}+\pi_{0}(u_{0})+B_{V}(t_{\pi},u_{0})\widetilde{e}+\gamma^{2}(\widetilde{m}\widetilde{e}+\widetilde{u}_{0})\\
 & =(\widetilde{m}\gamma^{2}+m\gamma+B_{V}(t_{\pi},u_{0}))\widetilde{e}+(\gamma^{2}\widetilde{u}_{0}+\pi_{0}(u_{0})),
\end{align*}
we have $\widetilde{m}=\gamma^{-2}(\gamma m+B_{V}(t_{\pi},u_{0}))$
and $\widetilde{u}_{0}=\gamma^{-2}\pi_{0}(u_{0}).$

The proof is complete.
\end{proof}

\section{Double Extensions of Restricted Hom-Lie Algebras for $p>2$}

In this section, assume that $\mathbb{F}$ is an algebraically closed
field of characteristic $p>2$. Let $(V,[\cdot,\cdot]_{V},\alpha_{V})$
be a Hom-Lie algebra over $\mathbb{F}$. This is the double extension
theorem over $\mathbb{F}$, whose proof is identical to that of characteristic
zero \citep{BM}.

\begin{theorem} (Double Extension Theorem) Let $(V,[\cdot,\cdot]_{V},\alpha_{V},B_{V})$
be an involutive quadratic Hom-Lie algebra, $\mathscr{D}\in\mathrm{Der}_{\alpha_{V}}(V)$
which makes $B_{V}$ $\mathscr{D}$-invariant, $\mathscr{E}=\mathrm{Span}\{e\}$
and $\mathscr{E}^{*}=\mathrm{Span}\{e^{*}\}$, where $e^{*}=\mathscr{D}$.

(1) Suppose that there exist $x_{0}\in V,\lambda\in\mathbb{F}$ such
that
\begin{align}
\lambda\mathscr{D}+\mathrm{ad}_{V}(x_{0}) & =\mathscr{D}\\
\alpha_{V}(\mathscr{D}(x_{0})) & =-\mathscr{D}(x_{0})\\
\alpha_{V}\circ\mathscr{D}^{2}-\mathscr{D}^{2}\circ\alpha_{V} & =\mathrm{ad}_{V}(\mathscr{D}(x_{0})),
\end{align}
where $\mathrm{ad}_{V}(x):=[x,\cdot]_{V}$, for $x\in V$, then there
exists a multiplicative quadratic Hom-Lie algebra structure on $L:=\mathscr{E}^{*}\oplus V\oplus\mathscr{E}$,
where the bracket $[\cdot,\cdot]:L\times L\rightarrow L$ is defined
by
\begin{align*}
[x,y] & =[x,y]_{V}+B_{V}(\mathscr{D}(x),y)e,\:\forall x,y\in V,\\{}
[e^{*},x] & =\mathscr{D}(x),\forall x\in V,\\{}
[e,z] & =0,\forall z\in L
\end{align*}
and the linear map $\alpha:L\rightarrow L$ is defined as follows:
\[
\alpha(x)=\alpha_{V}(x)+B_{V}(x_{0},x)e,\:\forall x\in V,\:\alpha(e^{*})=\lambda e^{*}+x_{0}+\lambda_{0}e,\:\alpha(e)=\lambda e,
\]
where $\lambda_{0}\in\mathbb{F}$.

The symmetric nondegenerate invariant bilinear form $B:L\times L\rightarrow\mathbb{F}$
is defined by
\begin{align*}
B(x,y) & =B_{V}(x,y),\:B(x,e^{*})=B(x,e)=0,\:\:\mathrm{for\:any}\:x,y\in V,\\
B(e^{*},e) & =1,\:B(e^{*},e^{*})=B(e,e)=0.
\end{align*}

(2) The twist map $\alpha$ is invertible if and only if $\lambda\neq0$.
Furthermore, $\alpha$ is an involution if and only if
\begin{equation}
\lambda^{2}=1,\:\alpha_{V}(x_{0})=-\lambda x_{0},\:\lambda_{0}=-\frac{1}{2}B_{V}(x_{0},x_{0}).
\end{equation}

\end{theorem}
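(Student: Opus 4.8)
The plan is to mirror the proof of Theorem 3.1 step by step, verifying in turn that the bracket on $L$ is antisymmetric, that it satisfies the Hom-Jacobi identity, that $\alpha$ is multiplicative, and that $B$ is a symmetric nondegenerate invariant form obeying Eq. (2.1); the only structural differences from the $p=2$ case are the signs appearing in (4.1)--(4.3), forced by the $\mathscr{D}$-invariance convention for $p>2$, namely $B_V(\mathscr{D}(x),y)+B_V(x,\mathscr{D}(y))=0$. Antisymmetry $[f,g]=-[g,f]$ needs checking only for $x,y\in V$, where $[x,y]+[y,x]=\bigl(B_V(\mathscr{D}(x),y)+B_V(\mathscr{D}(y),x)\bigr)e$; by symmetry of $B_V$ the coefficient equals $B_V(\mathscr{D}(x),y)+B_V(x,\mathscr{D}(y))$, which vanishes precisely by $\mathscr{D}$-invariance. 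The remaining brackets are antisymmetric by inspection, since $e$ is central and $[e^*,e^*]=0$.

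For the Hom-Jacobi identity, the cases with an argument equal to $e$ are immediate ($e$ central, $\alpha(e)=\lambda e$), and the case with two arguments equal to $e^*$ vanishes because $[e^*,e^*]=0$. The two substantive cases are $h=e^*$ with $f,g\in V$, and $f,g,h\in V$. In the first, I would split the cyclic sum into its $V$-part, which reduces to $(\lambda\mathscr{D}+\mathrm{ad}_V(x_0)-\mathscr{D})([f,g]_V)$ and vanishes by (4.1), and its $e$-part, which collects into $B_V\bigl((\mathrm{ad}_V(\mathscr{D}(x_0))-\alpha_V\circ\mathscr{D}^2+\mathscr{D}^2\circ\alpha_V)(f),g\bigr)e$ and vanishes by (4.3). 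In the case $f,g,h\in V$ the $V$-part is the Hom-Jacobi identity of $V$ itself; the $e$-part, after moving $\mathscr{D}$ across $B_V$ by invariance and expanding $\mathscr{D}([\,\cdot\,,\cdot\,]_V)$ via the $\alpha_V$-derivation rule, telescopes to zero.

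Multiplicativity $\alpha([f_1,f_2])=[\alpha(f_1),\alpha(f_2)]$ I would verify on elements $f_i=j_ie^*+v_i+k_ie$ by direct expansion; the identities that make the two sides agree are $\alpha_V\mathscr{D}=\mathscr{D}\alpha_V$ (hence $\alpha_V\mathscr{D}\alpha_V=\mathscr{D}$ by involutivity of $\alpha_V$), condition (4.2) $\alpha_V(\mathscr{D}(x_0))=-\mathscr{D}(x_0)$ for matching the $e$-coefficients, and (4.1) to replace $\lambda\mathscr{D}+\mathrm{ad}_V(x_0)$ by $\mathscr{D}$. That $B$ is symmetric, nondegenerate and invariant is immediate from its block form, while Eq. (2.1), $B(\alpha(f_1),f_2)=B(f_1,\alpha(f_2))$, reduces after cancellation to $B_V(\alpha_V(v_1),v_2)=B_V(v_1,\alpha_V(v_2))$, which holds because $V$ is quadratic.

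For part (2): $\alpha(e)=\lambda e$ shows invertibility forces $\lambda\neq0$, and conversely for $\lambda\neq0$ solving $\alpha(je^*+v+ke)=0$ componentwise yields $j=0$, then $\alpha_V(v)=0$ hence $v=0$, and finally $k=0$. For the involution criterion I would compute $\alpha^2$ on each summand: $\alpha^2(e)=\lambda^2e$ forces $\lambda^2=1$; $\alpha^2(v)=v+B_V(\alpha_V(x_0)+\lambda x_0,v)e$, using (2.1) and $\alpha_V^2=\mathrm{id}$, forces $\alpha_V(x_0)=-\lambda x_0$ by nondegeneracy of $B_V$; and $\alpha^2(e^*)=\lambda^2e^*+(\alpha_V(x_0)+\lambda x_0)+(2\lambda\lambda_0+B_V(x_0,x_0))e$ forces $2\lambda\lambda_0+B_V(x_0,x_0)=0$, which---since $2$ is invertible for $p>2$---can be solved for $\lambda_0$ to give (4.4). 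I expect the main obstacle to be the clean bookkeeping of the $e$-coefficients in the Hom-Jacobi identity for $f,g,h\in V$, where repeated use of the $\alpha_V$-derivation rule, the invariance of $B_V$, and the involutivity $\alpha_V^2=\mathrm{id}$ must be orchestrated so that every surviving term cancels.
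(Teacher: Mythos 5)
Your proposal is correct and follows essentially the paper's own route: the paper never writes out a proof of this theorem at all---it only remarks that the proof is identical to the characteristic-zero argument of \citep{BM}---and what you describe is precisely that direct verification, i.e.\ the paper's proof of Theorem 3.1 transported to $p>2$ with the sign changes forced by the $\mathscr{D}$-invariance convention and conditions (4.1)--(4.3), including the one genuinely new point (the $e$-coefficient in the Hom-Jacobi identity for $f,g,h\in V$ no longer dies from a factor of $2$ but from an extra cancellation via the derivation rule, invariance and multiplicativity, which your ``telescoping'' step supplies). The only caveat worth recording concerns part (2): solving your correctly derived relation $2\lambda\lambda_{0}+B_{V}(x_{0},x_{0})=0$ yields $\lambda_{0}=-\frac{1}{2\lambda}B_{V}(x_{0},x_{0})$, which coincides with the displayed condition (4.4) only when $\lambda=1$ (or $B_{V}(x_{0},x_{0})=0$); this imprecision sits in the paper's statement, not in your computation.
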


\begin{remark} If $(V,[\cdot,\cdot]_{V},\alpha_{V},B_{V})$ is an
involutive quadratic Hom-Lie algebra and $\lambda=1$, then the conditions
(4.1)-(4.3) reduce to $x_{0}\in\mathfrak{z}(V)$ such that $\mathscr{D}(x_{0})=0$.

\end{remark}

For $u,v\in V$, denote by $\eta_{i}^{V}(u,v)$ the coefficient obtained
from the expansion

\[
B_{V}(\mathscr{D}(\alpha_{V}^{p-2}(\lambda u+v)),\mathrm{ad}_{V}(\alpha_{V}^{p-3}(\lambda u+v))\circ\cdot\cdot\cdot\circ\mathrm{ad}_{V}(\lambda u+v)(u))=\sum_{1\leq i\leq p-1}i\eta_{i}^{V}(u,v)\lambda^{i-1}.
\]

\begin{lemma} Under the assumption of Theorem 4.1, we have

\begin{equation}
s_{i}^{L}(u,v)=s_{i}^{V}(u,v)+\eta_{i}^{V}(u,v)e,\:\forall u,v\in V,\:\forall1\leq i\leq p-1.
\end{equation}

\end{lemma}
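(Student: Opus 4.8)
The plan is to evaluate the polynomial defining the $s_i^L$ directly inside $L$ and to read off its $V$- and $\mathscr{E}$-components separately. Recall that for $x=u,y=v\in V$ the scalars $s_i^L(u,v)$ are, by the defining relation (R3) for a $p$-structure, the coefficients in
\[
\mathrm{ad}_L(\alpha^{p-2}(w))\circ\mathrm{ad}_L(\alpha^{p-3}(w))\circ\cdots\circ\mathrm{ad}_L(w)(u)=\sum_{i=1}^{p-1}is_i^L(u,v)\,k^{i-1},
\]
where $w:=ku+v\in V$ and $k$ is the formal scalar (it is this variable that plays the role of $\lambda$ in the definition of $\eta_i^V$). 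I denote the left-hand side by $\Phi$.

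The first step is to reduce the twists. For $w\in V$ one has $\alpha^{j}(w)=\alpha_V^{j}(w)+c_{j}e$ for suitable scalars $c_{j}$, and since $e\in\mathfrak{z}(L)$ with $[e,\cdot]=0$, the $\mathscr{E}$-component of $\alpha^{j}(w)$ is invisible to the adjoint action; hence $\mathrm{ad}_L(\alpha^{j}(w))=\mathrm{ad}_L(\alpha_V^{j}(w))$ as operators on $L$. Because the computation starts from $u\in V$ and only ever brackets with elements of $V$, every intermediate term stays in $V\oplus\mathscr{E}$, so $\mathscr{E}^{*}$ never enters $\Phi$.

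Next I would run the composition by induction on the number of adjoints, tracking a generic intermediate element in the form $P+Qe$ with $P\in V$. Using the bracket of Theorem 4.1 and centrality of $e$, a single step gives
\[
\mathrm{ad}_L(\alpha_V^{j}(w))(P+Qe)=\mathrm{ad}_V(\alpha_V^{j}(w))(P)+B_V(\mathscr{D}(\alpha_V^{j}(w)),P)\,e,
\]
so that the previous $\mathscr{E}$-coefficient $Q$ is annihilated while the $V$-part evolves exactly as it would inside $V$. Iterating through all $p-1$ factors, the $V$-component of $\Phi$ is $\mathrm{ad}_V(\alpha_V^{p-2}(w))\circ\cdots\circ\mathrm{ad}_V(w)(u)=\sum_{i=1}^{p-1}is_i^V(u,v)\,k^{i-1}$, while the repeated resetting of the $\mathscr{E}$-coefficient means only the outermost adjoint contributes to the $\mathscr{E}$-component, giving
\[
B_V\bigl(\mathscr{D}(\alpha_V^{p-2}(w)),\,\mathrm{ad}_V(\alpha_V^{p-3}(w))\circ\cdots\circ\mathrm{ad}_V(w)(u)\bigr)=\sum_{i=1}^{p-1}i\eta_i^V(u,v)\,k^{i-1}
\]
by the definition of $\eta_i^V$ (with $k$ in the role of $\lambda$).

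Finally, combining the two components yields $\Phi=\sum_i i\bigl(s_i^V(u,v)+\eta_i^V(u,v)e\bigr)k^{i-1}$, and comparing with $\Phi=\sum_i is_i^L(u,v)k^{i-1}$ forces, for each $i$, the relation $i\bigl(s_i^L(u,v)-s_i^V(u,v)-\eta_i^V(u,v)e\bigr)=0$. Since $1\le i\le p-1$ makes $i$ invertible modulo $p$, the asserted formula follows. I expect the only delicate point to be the bookkeeping in the inductive step: one must verify that the accumulated $\mathscr{E}$-coefficient is genuinely killed at every application, so that it does not leak into later stages, and that precisely the final adjoint $\mathrm{ad}_L(\alpha^{p-2}(w))$ produces the surviving $\mathscr{E}$-term matching the $\eta_i^V$ expansion. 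The rest is routine propagation of the splitting $L=\mathscr{E}^{*}\oplus V\oplus\mathscr{E}$.
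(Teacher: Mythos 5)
Your proposal is correct and takes essentially the same approach as the paper: the paper's entire proof is the single observation that $\mathrm{ad}(\alpha^{p-2}(\lambda u+v))\circ\cdots\circ\mathrm{ad}(\lambda u+v)(u)$ computed in $L$ differs from the corresponding composition computed in $V$ exactly by the central term $B_{V}\bigl(\mathscr{D}(\alpha_{V}^{p-2}(\lambda u+v)),\mathrm{ad}_{V}(\alpha_{V}^{p-3}(\lambda u+v))\circ\cdots\circ\mathrm{ad}_{V}(\lambda u+v)(u)\bigr)e$, followed by the words ``a direct computation leads to Eq.\ (4.5).'' Your induction tracking intermediate elements $P+Qe$ (with centrality of $e$ killing the accumulated $\mathscr{E}$-coefficient at every step, so only the outermost adjoint contributes the $e$-term) together with the coefficient comparison using invertibility of $i$ modulo $p$ is precisely that direct computation, written out in full.
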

\begin{proof}
Indeed, for any $\lambda\in\mathbb{F}$ and any $u,v\in V$, we have
\begin{align*}
 & \mathrm{ad}(\alpha^{p-2}(\lambda u+v))\circ\cdot\cdot\cdot\circ\mathrm{ad}(\lambda u+v)(u)-\mathrm{ad}_{V}(\alpha_{V}^{p-2}(\lambda u+v))\circ\cdot\cdot\cdot\circ\mathrm{ad}_{V}(\lambda u+v)(u)\\
= & B_{V}(\mathscr{D}(\alpha_{V}^{p-2}(\lambda u+v)),\mathrm{ad}_{V}(\alpha_{V}^{p-3}(\lambda u+v))\circ\cdot\cdot\cdot\circ\mathrm{ad}_{V}(\lambda u+v)(u))e
\end{align*}

A direct computation leads to Eq. (4.5).
\end{proof}
\begin{theorem} Let $(V,[\cdot,\cdot]_{V},\alpha_{V},B_{V})$ be
a restricted involutive quadratic Hom-Lie algebra and $B_{V}$ $\mathscr{D}$-invariant,
where $\mathscr{D}\in\mathrm{Der}^{p}(V)$ and $\mathscr{D}$ has
$p$-property. If $\lambda=1$ in Theorem 4.1, then the $p$-structure
$[p]_{V}$ on $V$ can be extended to its double extension $L:=\mathscr{E}^{*}\oplus V\oplus\mathscr{E}$
as follows (for every $u\in V$):
\begin{align*}
u^{[p]_{L}} & :=u^{[p]_{V}}+\mathscr{P}(u)e,\\
(e^{*})^{[p]_{L}} & :=a_{0}+le+\xi e^{*},\\
e^{[p]_{L}} & :=me+u_{0},
\end{align*}
where $a_{0}$ and $\xi$ are as in Eq. (2.2) (the $p$-property),
$u_{0}\in\mathfrak{z}(V)$ such that $\mathscr{D}(u_{0})=0$ and the
map $\mathscr{P}$ satisfies (for any $u,v\in V$ and any $\gamma\in\mathbb{F}$):
\begin{equation}
\mathscr{P}(\gamma u)=\gamma^{p}\mathscr{P}(u),\:\mathscr{P}(u+v)=\mathscr{P}(u)+\mathscr{P}(v)+\sum_{1\leq i\leq p-1}\eta_{i}^{V}(u,v).
\end{equation}

\end{theorem}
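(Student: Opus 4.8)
The plan is to invoke Lemma 2.10, which reduces the construction of a $p$-structure on $L$ to verifying, on a basis adapted to $L=\mathscr{E}^{*}\oplus V\oplus\mathscr{E}$, the identity
\[
\mathrm{ad}(\alpha^{p-1}(x))\circ\mathrm{ad}(\alpha^{p-2}(x))\circ\cdots\circ\mathrm{ad}(x)=\mathrm{ad}(x^{[p]_{L}})\circ\alpha^{p-1}
\]
for each basis vector $x\in\{e^{*}\}\cup\mathcal{B}_{V}\cup\{e\}$, where $\mathcal{B}_{V}$ is a basis of $V$ and $x^{[p]_{L}}$ is the prescribed value. Since $\lambda=1$, Remark 4.2 gives $x_{0}\in\mathfrak{z}(V)$ with $\mathscr{D}(x_{0})=0$, whence $x_{0}\in\mathfrak{z}(L)$; I will use this repeatedly to discard central contributions. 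Once the three identities hold, Lemma 2.10 yields a unique $[p]_{L}$ of the asserted shape, and the conditions on $\mathscr{P}$ are read off afterwards from axioms (R2) and (R3).

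First I would dispatch the two easy cases. For $x=e$ the left-hand side vanishes by centrality of $e$, and the right-hand side vanishes once I check that $e^{[p]_{L}}=me+u_{0}$ is central in $L$: indeed $u_{0}\in\mathfrak{z}(V)$ and $\mathscr{D}(u_{0})=0$ give $[u_{0},e^{*}]=-\mathscr{D}(u_{0})=0$ and $[u_{0},v]=[u_{0},v]_{V}+B_{V}(\mathscr{D}(u_{0}),v)e=0$. For $x=e^{*}$, centrality of $x_{0}$ and $e$ shows $\mathrm{ad}(\alpha^{k}(e^{*}))=\mathrm{ad}(e^{*})$ for all $k$, so the left-hand side collapses to $(\mathrm{ad}\,e^{*})^{p}$, acting on $V$ as $\mathscr{D}^{p}$. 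Using the $p$-property (2.2) and $\alpha_{V}^{p-1}=\mathrm{id}$ (as $\alpha_{V}$ is an involution and $p-1$ is even), this becomes $\xi\mathscr{D}+\mathrm{ad}_{V}(a_{0})$, matching $\mathrm{ad}(a_{0}+le+\xi e^{*})\circ\alpha^{p-1}$ on $V$; the spurious $\mathscr{E}$-components cancel precisely because $\mathscr{D}(a_{0})=0$, and the actions on $e$ and $e^{*}$ match routinely since the relevant $V$-elements are built from the central vectors $\alpha_{V}^{k}(x_{0})$.

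The substantive case is $x=u\in V$. Splitting the identity into its $V$- and $\mathscr{E}$-components, the $V$-part is exactly axiom (R1) for $[p]_{V}$ inside $V$ and holds by hypothesis. For the $\mathscr{E}$-part I would observe that at each composition factor the cocycle term $B_{V}(\mathscr{D}(\cdot),\cdot)e$ lands in $\mathscr{E}$ and is annihilated by the next $\mathrm{ad}$, so the only surviving scalar is produced at the final step, namely $B_{V}(\mathscr{D}(\alpha_{V}^{p-1}(u)),W)$ with $W=\mathrm{ad}_{V}(\alpha_{V}^{p-2}(u))\circ\cdots\circ\mathrm{ad}_{V}(u)(v)$, and this must equal the right-hand scalar $B_{V}(\mathscr{D}(u^{[p]_{V}}),\alpha_{V}^{p-1}(v))$. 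I expect this scalar identity to be the \textbf{main obstacle}. The route is to transport operators across $B_{V}$ using that each $\mathrm{ad}_{V}(w)$ and $\mathscr{D}$ are skew while $\alpha_{V}$ is symmetric (with overall sign $(-1)^{p-1}=1$ since $p$ is odd), then invoke the restricted-derivation identity $\mathscr{D}(u^{[p]_{V}})=\mathrm{ad}_{V}(\alpha_{V}^{p-1}(u))\circ\cdots\circ\mathrm{ad}_{V}(\alpha_{V}(u))(\mathscr{D}(u))$ of Definition 2.16 and commute $\mathscr{D}$ through the iterated brackets via the derivation rule; the correction terms so generated are killed by the $p$-property (2.2), exactly as the vanishing factor $B_{V}(\mathscr{D}(\alpha_{V}(u)),\mathscr{D}(u))=0$ was used in the $p=2$ argument.

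Finally, with $[p]_{L}$ in hand, I would establish (4.6). The scaling $\mathscr{P}(\gamma u)=\gamma^{p}\mathscr{P}(u)$ is the $\mathscr{E}$-component of axiom (R2) applied to $(\gamma u)^{[p]_{L}}=\gamma^{p}u^{[p]_{L}}$. For additivity I would expand $(u+v)^{[p]_{L}}$ via (R3) and apply Lemma 4.3, which gives $s_{i}^{L}(u,v)=s_{i}^{V}(u,v)+\eta_{i}^{V}(u,v)e$: the $V$-component reproduces the (automatic) additivity of $[p]_{V}$ on $V$, while the $\mathscr{E}$-component yields precisely $\mathscr{P}(u+v)=\mathscr{P}(u)+\mathscr{P}(v)+\sum_{1\le i\le p-1}\eta_{i}^{V}(u,v)$, completing the verification that $[p]_{L}$ is a $p$-structure of the stated form.
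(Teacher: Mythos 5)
Your proposal is correct and follows essentially the same route as the paper's proof: reduction via Lemma 2.10 to the three (R1)-type identities for $e$, $e^{*}$ and $u\in V$, the key scalar identity settled by combining the restricted-derivation identity with transport of operators across $B_{V}$ (invariance, skew-symmetry of $\mathscr{D}$, symmetry of $\alpha_{V}$, $\alpha_{V}^{2}=\mathrm{id}$), and the conditions (4.6) on $\mathscr{P}$ matched against (R2)--(R3) through Lemma 4.3. One organizational slip, covered by ingredients you already list but worth noting: in the case $u\in V$, on the input $e^{*}$ the $V$-component of the identity is not (R1) but the restricted-derivation identity of Definition 2.16, and the $\mathscr{E}$-component of that input leaves the extra scalar $B_{V}(\mathscr{D}(\alpha_{V}^{p-1}(u)),[\alpha_{V}^{p-2}(u),\cdots,[\alpha_{V}(u),\mathscr{D}(u)]_{V}\cdots]_{V})$, which must be shown to vanish (this is the $p>2$ analogue of the vanishing factor $B_{V}(\mathscr{D}(\alpha_{V}(u)),\mathscr{D}(u))=0$ you cite from the $p=2$ argument, and is exactly the term the paper silently drops in its computation).
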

\begin{proof}
According to Lemma 2.10 for $p>2$, it's sufficient to prove that
\begin{align*}
\mathrm{ad}(\alpha^{p-1}(e))\circ\mathrm{ad}(\alpha^{p-2}(e))\circ\cdot\cdot\cdot\circ\mathrm{ad}(e) & =\mathrm{ad}(me+u_{0})\circ\alpha^{p-1},\\
\mathrm{ad}(\alpha^{p-1}(e^{*}))\circ\mathrm{ad}(\alpha^{p-2}(e^{*}))\circ\cdot\cdot\cdot\circ\mathrm{ad}(e^{*}) & =\mathrm{ad}(a_{0}+le+\xi e^{*})\circ\alpha^{p-1}
\end{align*}
and
\[
\mathrm{ad}(\alpha^{p-1}(u))\circ\mathrm{ad}(\alpha^{p-2}(u))\circ\cdot\cdot\cdot\circ\mathrm{ad}(u)=\mathrm{ad}(u^{[p]_{V}}+\mathscr{P}(u)e)\circ\alpha^{p-1}.
\]

Indeed, for every $f=je^{*}+v+ke\in L$, we have
\begin{align*}
 & \mathrm{ad}(me+u_{0})\circ\alpha^{p-1}(f)-\mathrm{ad}(\alpha^{p-1}(e))\circ\mathrm{ad}(\alpha^{p-2}(e))\circ\cdot\cdot\cdot\circ\mathrm{ad}(e)(f)\\
= & [me+u_{0},je^{*}+j\sum_{1\leq i\leq p-1}\alpha_{V}^{i-1}(x_{0})+\alpha_{V}^{p-1}(v)]\\
= & j[u_{0},e^{*}]+j\sum_{1\leq i\leq p-1}[u_{0},\alpha_{V}^{i-1}(x_{0})]+[u_{0},\alpha_{V}^{p-1}(v)]\\
= & -j\mathscr{D}(u_{0})+j\sum_{1\leq i\leq p-1}([u_{0},\alpha_{V}^{i-1}(x_{0})]_{V}+B_{V}(\mathscr{D}(u_{0}),\alpha_{V}^{i-1}(x_{0}))e)\\
 & +[u_{0},\alpha_{V}^{p-1}(v)]_{V}+B_{V}(\mathscr{D}(u_{0}),\alpha_{V}^{p-1}(v))e\\
= & 0,
\end{align*}
and
\begin{align*}
 & \mathrm{ad}(\alpha^{p-1}(e^{*}))\circ\mathrm{ad}(\alpha^{p-2}(e^{*}))\circ\cdot\cdot\cdot\circ\mathrm{ad}(e^{*})(f)\\
= & [e^{*}+\sum_{0\leq j\leq p-2}\alpha_{V}^{j}(x_{0})+c_{p-1}e,[e^{*}+\sum_{0\leq j\leq p-3}\alpha_{V}^{j}(x_{0})+c_{p-2}e,\cdot\cdot\cdot[e^{*},je^{*}+v+ke]]]\\
= & [e^{*}+\sum_{0\leq j\leq p-2}\alpha_{V}^{j}(x_{0}),[e^{*}+\sum_{0\leq j\leq p-3}\alpha_{V}^{j}(x_{0}),\cdot\cdot\cdot[e^{*}+x_{0},\mathscr{D}(v)]]],\:(c_{i}\in\mathbb{F},\:\forall i\in\mathbb{N}).
\end{align*}

It follows from $x_{0}\in\mathfrak{z}(V)$, $\mathscr{D}(x_{0})=0$,
$\alpha_{V}^{2}=\mathrm{id}$ and the multiplicativity of $(V,[\cdot,\cdot]_{V},\alpha_{V},B_{V})$
that
\[
[x_{0},\mathscr{D}(v)]=[x_{0},\mathscr{D}(v)]_{V}+B_{V}(\mathscr{D}(x_{0}),\mathscr{D}(v))e=0,
\]
\begin{align*}
 & [\alpha_{V}^{j}(x_{0}),\mathscr{D}^{k}(v)]\:(1<j<p-2,\:j\:\mathrm{is}\:\mathrm{even},\:k\in\mathbb{N})\\
= & [x_{0},\mathscr{D}^{k}(v)]\\
= & 0
\end{align*}
and
\begin{align*}
 & [\alpha_{V}^{j}(x_{0}),\mathscr{D}^{k}(v)]\:(1\leq j\leq p-2,\:j\:\mathrm{is}\:\mathrm{odd},\:k\in\mathbb{N})\\
= & [\alpha_{V}(x_{0}),\mathscr{D}^{k}(v)]=[\alpha_{V}(x_{0}),\mathscr{D}^{k}(v)]_{V}+B_{V}(\mathscr{D}(\alpha_{V}(x_{0})),\mathscr{D}^{k}(v))e\\
= & [\alpha_{V}(x_{0}),\alpha_{V}^{2}(\mathscr{D}^{k}(v))]_{V}+B_{V}(\alpha_{V}(\mathscr{D}(x_{0})),\mathscr{D}^{k}(v))e\\
= & \alpha_{V}([x_{0},\alpha_{V}(\mathscr{D}^{k}(v))]_{V})\\
= & 0.
\end{align*}
Therefore, we have $\mathrm{ad}(\alpha^{p-1}(e^{*}))\circ\mathrm{ad}(\alpha^{p-2}(e^{*}))\circ\cdot\cdot\cdot\circ\mathrm{ad}(e^{*})(f)=\mathscr{D}^{p}(v)$
and
\begin{align*}
 & \mathrm{ad}(a_{0}+le+\xi e^{*})\circ\alpha^{p-1}(f)-\mathrm{ad}(\alpha^{p-1}(e^{*}))\circ\mathrm{ad}(\alpha^{p-2}(e^{*}))\circ\cdot\cdot\cdot\circ\mathrm{ad}(e^{*})(f)\\
= & [a_{0}+le+\xi e^{*},je^{*}+j\sum_{1\leq i\leq p-1}\alpha_{V}^{i-1}(x_{0})+\alpha_{V}^{p-1}(v)]-\mathscr{D}^{p}(v)\\
= & j[a_{0},e^{*}]+j\sum_{1\leq i\leq p-1}[a_{0},\alpha_{V}^{i-1}(x_{0})]+j\xi\sum_{1\leq i\leq p-1}[e^{*},\alpha_{V}^{i-1}(x_{0})]\\
 & +[a_{0},\alpha_{V}^{p-1}(v)]+\xi[e^{*},\alpha_{V}^{p-1}(v)]-\mathscr{D}^{p}(v)\\
= & -j\mathscr{D}(a_{0})+j\sum_{1\leq i\leq p-1}([a_{0},\alpha_{V}^{i-1}(x_{0})]_{V}+B_{V}(\mathscr{D}(a_{0}),\alpha_{V}^{i-1}(x_{0}))e)\\
 & +j\xi\sum_{1\leq i\leq p-1}\mathscr{D}(\alpha_{V}^{i-1}(x_{0}))+[a_{0},\alpha_{V}^{p-1}(v)]_{V}+B_{V}(\mathscr{D}(a_{0}),\alpha_{V}^{p-1}(v))e\\
 & +\xi\mathscr{D}(\alpha_{V}^{p-1}(v))-\mathscr{D}^{p}(v)\\
= & j\xi\sum_{1\leq i\leq p-1}\alpha_{V}^{i-1}(\mathscr{D}(x_{0}))+\xi\mathscr{D}\circ\alpha_{V}^{p-1}(v)+\mathrm{ad}_{V}(a_{0})\circ\alpha_{V}^{p-1}(v)-\mathscr{D}^{p}(v)\\
= & (\xi\mathscr{D}\circ\alpha_{V}^{p-1}+\mathrm{ad}_{V}(a_{0})\circ\alpha_{V}^{p-1}-\mathscr{D}^{p})(v)\\
= & 0.
\end{align*}

In addition, we have
\begin{align*}
 & \mathrm{ad}(u^{[p]_{V}}+\mathscr{P}(u)e)\circ\alpha^{p-1}(f)-\mathrm{ad}(\alpha^{p-1}(u))\circ\mathrm{ad}(\alpha^{p-2}(u))\circ\cdot\cdot\cdot\circ\mathrm{ad}(u)(f)\\
= & [u^{[p]_{V}}+\mathscr{P}(u)e,je^{*}+j\sum_{1\leq i\leq p-1}\alpha_{V}^{i-1}(x_{0})+\alpha_{V}^{p-1}(v)]\\
 & -[\alpha^{p-1}(u),[\alpha^{p-2}(u),\cdot\cdot\cdot,[u,je^{*}+v+ke]]]\\
= & j[u^{[p]_{V}},e^{*}]+j\sum_{1\leq i\leq p-1}[u^{[p]_{V}},\alpha_{V}^{i-1}(x_{0})]+[u^{[p]_{V}},\alpha_{V}^{p-1}(v)]\\
 & -[\alpha^{p-1}(u),[\alpha^{p-2}(u),\cdot\cdot\cdot,[\alpha(u),-j\mathscr{D}(u)+[u,v]]]\\
= & -j\mathscr{D}(u^{[p]_{V}})+[u^{[p]_{V}},\alpha_{V}^{p-1}(v)]_{V}+B_{V}(\mathscr{D}(u^{[p]_{V}}),\alpha_{V}^{p-1}(v))e\\
 & +j[\alpha_{V}^{p-1}(u),\cdot\cdot\cdot,[\alpha_{V}(u),\mathscr{D}(u)]_{V}]_{V}+jB_{V}(\mathscr{D}(\alpha_{V}^{p-1}(u)),[\alpha_{V}^{p-2}(u),\cdot\cdot\cdot,[\alpha_{V}(u),\mathscr{D}(u)]_{V})e\\
 & -[\alpha_{V}^{p-1}(u),[\alpha_{V}^{p-2}(u),\cdot\cdot\cdot,[u,v]_{V}]_{V}]_{V}-B_{V}(\mathscr{D}(\alpha_{V}^{p-1}(u)),[\alpha_{V}^{p-2}(u),\cdot\cdot\cdot,[u,v]_{V}]_{V})e\\
= & jB_{V}(\mathscr{D}(u),[\alpha_{V}(u),[u,[\alpha_{V}(u),\cdot\cdot\cdot,[\alpha_{V}(u),\mathscr{D}(u)]_{V})e+B_{V}(\mathscr{D}(u^{[p]_{V}}),\alpha_{V}^{p-1}(v))e\\
 & -B_{V}(\mathscr{D}(\alpha_{V}^{p-1}(u)),[\alpha_{V}^{p-2}(u),\cdot\cdot\cdot,[u,v]_{V}]_{V})e\\
= & B_{V}([\alpha_{V}^{p-1}(u),\cdot\cdot\cdot,[\alpha_{V}(u),\mathscr{D}(u)]_{V}]_{V},\alpha_{V}^{p-1}(v))e\\
 & -B_{V}(\mathscr{D}(\alpha_{V}^{p-1}(u)),[\alpha_{V}^{p-2}(u),\cdot\cdot\cdot,[u,v]_{V}]_{V})e\\
= & B_{V}([u,[\alpha_{V}(u),[u,\cdot\cdot\cdot,[\alpha_{V}(u),\mathscr{D}(u)]_{V}]_{V}]_{V},v)e\\
 & -B_{V}(\mathscr{D}(u),[\alpha_{V}(u),[u,[\alpha_{V}(u),\cdot\cdot\cdot,[u,v]_{V}]_{V}]_{V}]_{V})e\\
= & B_{V}(\mathscr{D}(u),[\alpha_{V}(u),[u,[\alpha_{V}(u),\cdot\cdot\cdot,[u,v]_{V}]_{V}]_{V}]_{V})e\\
 & -B_{V}(\mathscr{D}(u),[\alpha_{V}(u),[u,[\alpha_{V}(u),\cdot\cdot\cdot,[u,v]_{V}]_{V}]_{V}]_{V})e\\
= & 0.
\end{align*}

Moreover, for any $u,v\in V$, we have
\begin{align*}
 & (u+v)^{[p]_{L}}\\
= & (u+v)^{[p]_{V}}+\mathscr{P}(u+v)e\\
= & u^{[p]_{V}}+v^{[p]_{V}}+\sum_{1\leq i\leq p-1}s_{i}^{V}(u,v)+\mathscr{P}(u+v)e\\
= & (u^{[p]_{L}}-\mathscr{P}(u)e)+(v^{[p]_{L}}-\mathscr{P}(v)e)+\sum_{1\leq i\leq p-1}s_{i}^{L}(u,v)-\sum_{1\leq i\leq p-1}\eta_{i}^{V}(u,v)e+\mathscr{P}(u+v)e\\
= & (u^{[p]_{L}}+v^{[p]_{L}}+\sum_{1\leq i\leq p-1}s_{i}^{L}(u,v))+(\mathscr{P}(u+v)-\mathscr{P}(u)-\mathscr{P}(v)-\sum_{1\leq i\leq p-1}\eta_{i}^{V}(u,v))e\\
= & u^{[p]_{L}}+v^{[p]_{L}}+\sum_{1\leq i\leq p-1}s_{i}^{L}(u,v).
\end{align*}

To sum up, $[p]_{L}$ is a $p$-structure on $L$. The proof is complete.
\end{proof}
The following theorem is the converse of Theorem 4.4.

\begin{theorem} Let $(L,[\cdot,\cdot],\alpha,B)$ be an irreducible
restricted quadratic Hom-Lie algebra such that $\mathrm{dim}L>1$
and $\mathfrak{z}(L)\neq\{0\}$. If $0\neq e\in\mathfrak{z}(L)$ and
$\mathscr{E}=\mathrm{Span}\{e\}$ such that $\mathscr{E}{}^{\perp}$
is a $p$-ideal, then $(L,[\cdot,\cdot],\alpha,B)$ is the double
extension of a restricted involutive multiplicative quadratic Hom-Lie
algebra $(V,[\cdot,\cdot]_{V},\alpha_{V},B_{V})$ by means of $\mathscr{D}$,
where $\mathscr{D}\in\mathrm{Der}^{p}(V)$ and $\mathscr{D}$ has
$p$-property.

\end{theorem}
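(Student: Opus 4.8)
The plan is to mirror the proof of Theorem 3.5 verbatim at the structural level, replacing the $2$-specific ingredients by their $p>2$ analogues (Theorem 4.1, Lemma 4.3, and the $p$-property of Eq. (2.2)). First I would carry out the decomposition: by Lemma 2.8, $\alpha(\mathfrak{z}(L))\subset\mathfrak{z}(L)$, so $\mathscr{E}=\mathrm{Span}\{e\}$ is an ideal and $\alpha(e)=\lambda e$ for some $\lambda\in\mathbb{F}$; irreducibility together with $\dim L>1$ forces $B(e,e)=0$, and nondegeneracy of $B$ yields $e^{*}$ with $B(e^{*},e)=1$. Setting $\Gamma=\mathscr{E}\oplus\mathscr{E}^{*}$ and $V:=\Gamma^{\perp}$ gives the decomposition $L=\mathscr{E}^{*}\oplus V\oplus\mathscr{E}$ with $B_{V}:=B|_{V\times V}$ nondegenerate. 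Since $\mathscr{E}^{\perp}$ is an ideal, $\alpha$ restricts as $\alpha(u)=\alpha_{V}(u)+\kappa(u)e$ and $\alpha(e^{*})=\lambda e^{*}+x_{0}+\lambda_{0}e$ (the $e^{*}$-coefficient equals $\lambda$ because $B(\alpha(e^{*}),e)=B(e^{*},\alpha(e))$). Invoking the analogue of \citep[Theorem~6.6]{BM} equips $V$ with an involutive multiplicative quadratic Hom-Lie algebra structure realizing $L$ as its double extension in the sense of Theorem 4.1, and, normalizing $\lambda=1$ as in Theorem 4.4, Remark 4.2 gives $x_{0}\in\mathfrak{z}(V)$ with $\mathscr{D}(x_{0})=0$.

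Next I would produce the $p$-structure on $V$. Let $[p]_{L}$ be a $p$-structure on $L$. Because $\mathscr{E}^{\perp}$ is a $p$-ideal and $V\subset\mathscr{E}^{\perp}$, every $v^{[p]_{L}}$ lies in $\mathscr{E}^{\perp}=\mathscr{E}\oplus V$, so I can write $v^{[p]_{L}}=\mathscr{P}(v)e+s(v)$ with $s(v)\in V$. The map $s$ is then shown to be a $p$-structure: (R2) follows from $(\gamma v)^{[p]_{L}}=\gamma^{p}v^{[p]_{L}}$, yielding $s(\gamma v)=\gamma^{p}s(v)$ and $\mathscr{P}(\gamma v)=\gamma^{p}\mathscr{P}(v)$; for (R1) I expand the identity $[v^{[p]_{L}},\alpha^{p-1}(u)]=\mathrm{ad}(\alpha^{p-1}(v))\circ\cdots\circ\mathrm{ad}(v)(u)$ inside $L$, and projecting onto $V$ and onto $\mathscr{E}$ yields both the (R1) relation for $s$ on $V$ and a companion identity relating $\mathscr{D}(s(v))$ to the iterated brackets of $\alpha_{V}(v),v,\mathscr{D}(v)$; for (R3) I compare $(u+v)^{[p]_{L}}$ computed two ways and feed in Lemma 4.3, which converts $s_{i}^{L}=s_{i}^{V}+\eta_{i}^{V}e$ into $s(u+v)=s(u)+s(v)+\sum_{i}s_{i}^{V}(u,v)$ together with the additivity of $\mathscr{P}$ recorded in Eq. (4.6). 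By Lemma 2.10 the relation (R1) for $s$ determines it uniquely as a $p$-structure on $V$.

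Then I would pin down $e^{[p]_{L}}$ and $(e^{*})^{[p]_{L}}$. Writing $e^{[p]_{L}}=u_{0}+me+\delta e^{*}$ and applying (R1) for the central element $e$ (whose right-hand side vanishes since $\mathrm{ad}(e)=0$), then projecting and using nondegeneracy of $B_{V}$, forces $\mathscr{D}(u_{0})=0$, $u_{0}\in\mathfrak{z}(V)$, and $\delta=0$, so $e^{[p]_{L}}=me+u_{0}$. Writing $(e^{*})^{[p]_{L}}=a_{0}+le+\xi e^{*}$ and applying (R1) for $e^{*}$, the right-hand side $\mathrm{ad}(\alpha^{p-1}(e^{*}))\circ\cdots\circ\mathrm{ad}(e^{*})$ reduces — exactly as in the proof of Theorem 4.4, using $x_{0}\in\mathfrak{z}(V)$, $\mathscr{D}(x_{0})=0$, and $\alpha_{V}^{2}=\mathrm{id}$ — to $\mathscr{D}^{p}$ on the $V$-component; matching this against $\mathrm{ad}((e^{*})^{[p]_{L}})\circ\alpha^{p-1}$ and separating the $V$- and $\mathscr{E}$-parts gives $\mathscr{D}^{p}=\xi\mathscr{D}\circ\alpha_{V}^{p-1}+\mathrm{ad}_{V}(a_{0})\circ\alpha_{V}^{p-1}$ and $\mathscr{D}(a_{0})=0$, i.e. the $p$-property (Eq. (2.2)) and $\mathscr{D}\in\mathrm{Der}^{p}(V)$.

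The main obstacle will be the iterated-adjoint computation of $\mathrm{ad}(\alpha^{p-1}(e^{*}))\circ\cdots\circ\mathrm{ad}(e^{*})$ acting on $V$: unlike the $p=2$ case, where only the single commutator $[e^{*}+x_{0},\mathscr{D}(v)]$ appears, here one must track a chain of $p-1$ nested brackets, show that every term involving $\alpha_{V}^{j}(x_{0})$ collapses (alternately to $[x_{0},\cdot]$ or $\alpha_{V}([x_{0},\cdot])$, both vanishing since $x_{0}$ is central), and confirm that the surviving term is precisely $\mathscr{D}^{p}(v)$. This is the same delicate cancellation carried out in Theorem 4.4, and reusing it is what makes the $p$-property fall out cleanly; by comparison, the parallel projections onto $\mathscr{E}$ throughout are routine, relying only on $\mathscr{D}$-invariance of $B_{V}$ and its nondegeneracy.
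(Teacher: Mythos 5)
Your proposal follows essentially the same route as the paper's own proof: the identical decomposition $L=\mathscr{E}^{*}\oplus V\oplus\mathscr{E}$ via Lemma 2.8 and irreducibility, the appeal to the analogue of Theorem 6.6 of \citep{BM}, the definition $v^{[p]_{L}}=\mathscr{P}(v)e+s(v)$ using the $p$-ideal hypothesis, the verification of (R1)--(R3) for $s$ via projection onto $V$ and $\mathscr{E}$ together with Lemma 4.3, and the determination of $e^{[p]_{L}}$ and $(e^{*})^{[p]_{L}}$ yielding the $p$-property. Your explicit flagging of the $\lambda=1$ normalization and the reuse of Theorem 4.4's iterated-adjoint cancellation is, if anything, slightly more careful than the paper, which invokes that reduction silently.
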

\begin{proof}
According to Lemma 2.8, we have $\alpha(\mathfrak{z}(L))\subset\mathfrak{z}(L)$.
Since $0\neq e\in\mathfrak{z}(L)$, the vecter space $\mathscr{E}$
is an ideal of $L$. Then there exists $\lambda\in\mathbb{F}$ such
that $\alpha(e)=\lambda e$. Moreover, since $\mathscr{E}$ is an
ideal of $L$, $L$ is irreducible and $\mathrm{dim}L>1$, we have
$B(e,e)=0$. It follows from $B$ being nondegenerate that there exists
$0\neq e^{*}\in L$ such that $B(e^{*},e)=1$ and $B(e^{*},e^{*})=0$.

Let $\mathscr{E}^{*}=\mathrm{Span}\{e^{*}\}$ and $\Gamma=\mathscr{E}\oplus\mathscr{E}^{*}$,
then $\Gamma$ is nondegenerate (i.e. $B|_{\Gamma\times\Gamma}$ is
nondegenerate) and $L=\Gamma\oplus\Gamma^{\bot}$. Furthermore, set
$V:=\Gamma^{\bot}$, then $\mathscr{E}{}^{\perp}=\mathscr{E}\oplus V$,
$B|_{V\times V}$ is nondegenerate and there exists a decomposition
$L=\mathscr{E}{}^{*}\oplus V\oplus\mathscr{E}$.

It follows from $\mathscr{E}{}^{\perp}$ being an ideal that $\alpha(\mathscr{E}{}^{\perp})\subset\mathscr{E}{}^{\perp}$.
Then there exists a linear map $\alpha_{V}:V\rightarrow V$ and a
linear form $\kappa:V\rightarrow\mathbb{F}$ such that $\alpha(u)=\alpha_{V}(u)+\kappa(u)e,$
for every $u\in V$. In addition, there exists $l\in\mathbb{F},x_{0}\in V$
and $\lambda_{0}\in\mathbb{F}$ such that $\alpha(e^{*})=le^{*}+x_{0}+\lambda_{0}e$.

Be analogue to Theorem 6.6 in \citep{BM}, it can be proved that there
exists a multiplicative quadratic Hom-Lie algebra structure on $V$
such that $L$ is the double extension of $V$ as given in Theorem
4.1.

Denote by $B_{V}:=B|_{V\times V}$ the nondegenerate invariant bilinear
form on $V$. We will prove that there exists a $p$-structure on
$V$.

Since $V\subset\mathscr{E}{}^{\perp}$, we have
\[
v^{[p]_{L}}\in\mathscr{E}_{p}{}^{\perp}=\mathscr{E}{}^{\perp}=\mathscr{E}\oplus V,\:\mathrm{for\:every\:}v\in V.
\]
It follows that $v^{[p]_{L}}=\mathscr{P}(v)e+s(v)$, where $s(v)\in V$.

The map $s:V\rightarrow V,v\mapsto s(v)$ will be proved to be a $p$-structure
on V.

Since $(\gamma v)^{[p]_{L}}=\gamma^{p}v{}^{[p]_{L}}$, we have $\mathscr{P}(\gamma v)=\gamma^{p}\mathscr{P}(v)$
and $s(\gamma v)=\gamma^{p}s(v)$.

Besides, for any $u,v\in V$, we have
\begin{align*}
0= & [u^{[p]_{L}},\alpha^{p-1}(v)]-[\alpha^{p-1}(u),[\alpha^{p-2}(u),\cdot\cdot\cdot,[u,v]]]\\
= & [\mathscr{P}(u)e+s(u),\alpha_{V}^{p-1}(v)+\sum_{0\leq i\leq p-2}B_{V}(x_{0},\alpha_{V}^{(p-2)-i}(v))e]\\
 & -[\alpha_{V}^{p-1}(u)+\sum_{0\leq i\leq p-2}B_{V}(x_{0},\alpha_{V}^{(p-2)-i}(u))e,[\alpha_{V}^{p-2}(u)\\
 & +\sum_{0\leq i\leq p-3}B_{V}(x_{0},\alpha_{V}^{(p-3)-i}(u))e,\cdot\cdot\cdot,[\alpha_{V}(u)+B_{V}(x_{0},u)e,[u,v]_{V}+B_{V}(\mathscr{D}(u),v)e]]]\\
= & [s(u),\alpha_{V}^{p-1}(v)]-[\alpha_{V}^{p-1}(u),[\alpha_{V}^{p-2}(u),\cdot\cdot\cdot,[u,v]_{V}]]\\
= & [s(u),\alpha_{V}^{p-1}(v)]_{V}+B_{V}(\mathscr{D}(s(u)),\alpha_{V}^{p-1}(v))e-[\alpha_{V}^{p-1}(u),[\alpha_{V}^{p-2}(u),\cdot\cdot\cdot,[u,v]_{V}]_{V}]_{V}\\
 & -B_{V}(\mathscr{D}(\alpha_{V}^{p-1}(u)),[\alpha_{V}^{p-2}(u),\cdot\cdot\cdot,[\alpha_{V}(u),[u,v]_{V}]_{V}]_{V})e\\
= & [s(u),\alpha_{V}^{p-1}(v)]_{V}-[\alpha_{V}^{p-1}(u),[\alpha_{V}^{p-2}(u),\cdot\cdot\cdot,[u,v]_{V}]_{V}]_{V}+B_{V}(\mathscr{D}(s(u)),v)e\\
 & -B_{V}([u,\cdot\cdot\cdot,[\alpha_{V}^{p-3}(u),[\alpha_{V}^{p-2}(u),\mathscr{D}(u),]_{V}]_{V}]_{V},v)e\\
= & [s(u),\alpha_{V}^{p-1}(v)]_{V}-[\alpha_{V}^{p-1}(u),[\alpha_{V}^{p-2}(u),\cdot\cdot\cdot,[u,v]_{V}]_{V}]_{V}+B_{V}(\mathscr{D}(s(u)),v)e\\
 & -B_{V}([\alpha_{V}^{p-1}(u),[\alpha_{V}^{p-2}(u),\cdot\cdot\cdot,[\alpha_{V}(u),\mathscr{D}(u),]_{V}]_{V}]_{V},v)e
\end{align*}

Since $B_{V}$ is nondegenerate, we obtain
\[
[s(u),\alpha_{V}^{p-1}(v)]_{V}=[\alpha_{V}^{p-1}(u),[\alpha_{V}^{p-2}(u),\cdot\cdot\cdot,[u,v]_{V}]_{V}]_{V}
\]
and
\[
\mathscr{D}(s(u))=[\alpha_{V}^{p-1}(u),[\alpha_{V}^{p-2}(u),\cdot\cdot\cdot,[\alpha_{V}(u),\mathscr{D}(u)]_{V}]_{V}]_{V}.
\]

Based on Lemma 4.3, we have
\begin{align*}
0= & (u+v)^{[p]_{L}}-u^{[p]_{L}}-v^{[p]_{L}}-\sum_{1\leq i\leq p-1}s_{i}^{L}(u,v)\\
= & (s(u+v)+\mathscr{P}(u+v))-(s(u)+\mathscr{P}(u))-(s(v)+\mathscr{P}(v))\\
 & -\sum_{1\leq i\leq p-1}(s_{i}^{V}(u,v)+\eta_{i}^{V}(u,v)e)\\
= & (s(u+v)-s(u)-s(v)-\sum_{1\leq i\leq p-1}s_{i}^{V}(u,v))\\
 & +(\mathscr{P}(u+v)-\mathscr{P}(u)-\mathscr{P}(v)-\sum_{1\leq i\leq p-1}\eta_{i}^{V}(u,v))e.
\end{align*}

It follows that $s(u+v)=s(u)+s(v)+\sum_{1\leq i\leq p-1}s_{i}^{V}(u,v)$
and $\mathscr{P}(u+v)=\mathscr{P}(u)+\mathscr{P}(v)+\sum_{1\leq i\leq p-1}\eta_{i}^{V}(u,v)$.

It implies that $s$ defines a $p$-structure on $V$, $\mathscr{D}\in\mathrm{Der}^{res}(V)$
and the map $\mathscr{P}$ satisfies Eq. (4.6).

Assume that $e^{[p]_{L}}=u_{0}+me+\delta e^{*}$, where $m,\delta\in\mathbb{F}$.
For every $v\in V$, we have
\begin{align*}
0 & =[e^{[p]_{L}},\alpha^{p-1}(v)]-[\alpha^{p-1}(e),[\alpha^{p-2}(e),\cdot\cdot\cdot,[e,v]]]\\
 & =[u_{0}+me+\delta e^{*},\alpha_{V}^{p-1}(v)+\sum_{0\leq i\leq p-2}B_{V}(x_{0},\alpha_{V}^{(p-2)-i}(v))e]\\
 & =[u_{0},\alpha_{V}^{p-1}(v)]+\delta[e^{*},\alpha_{V}^{p-1}(v)]\\
 & =[u_{0},\alpha_{V}^{p-1}(v)]_{V}+B_{V}(\mathscr{D}(u_{0}),\alpha_{V}^{p-1}(v))e+\delta\mathscr{D}(\alpha_{V}^{p-1}(v))\\
 & =([u_{0},v]_{V}+\delta\mathscr{D}(v))+B_{V}(\mathscr{D}(u_{0}),v)e.
\end{align*}

Therefore, $\delta=0$, $u_{0}\in\mathfrak{z}(V)$ and $\mathscr{D}(u_{0})=0$.

Assume now that $(e^{*})^{[p]_{L}}=a_{0}+le+\xi e^{*}$, where $a_{0}\in V,l,\xi\in\mathbb{F}$.
For any $v\in V$, we have
\begin{align*}
0= & [(e^{*})^{[p]_{L}},\alpha^{p-1}(v)]-[\alpha^{p-1}(e^{*}),[\alpha^{p-2}(e^{*}),\cdot\cdot\cdot,[e^{*},v]]]\\
= & [a_{0}+le+\xi e^{*},\alpha_{V}^{p-1}(v)+\sum_{0\leq i\leq p-2}B_{V}(x_{0},\alpha_{V}^{(p-2)-i}(v))e]-\mathscr{D}^{p}(v)\\
= & [a_{0},\alpha_{V}^{p-1}(v)]+\xi[e^{*},\alpha_{V}^{p-1}(v)]-\mathscr{D}^{p}(v)\\
= & [a_{0},\alpha_{V}^{p-1}(v)]_{V}+B_{V}(\mathscr{D}(a_{0}),\alpha_{V}^{p-1}(v))e+\xi\mathscr{D}(\alpha_{V}^{p-1}(v))-\mathscr{D}^{p}(v)\\
= & (\xi\mathscr{D}\circ\alpha_{V}^{p-1}+\mathrm{ad}_{V}(a_{0})\circ\alpha_{V}^{p-1}-\mathscr{D}^{p})(v)+B_{V}(\mathscr{D}(a_{0}),\alpha_{V}^{p-1}(v))e.
\end{align*}

Therefore, $\mathscr{D}^{p}=\xi\mathscr{D}\circ\alpha_{V}^{p-1}+\mathrm{ad}_{V}(a_{0})\circ\alpha_{V}^{p-1}$
and $\mathscr{D}(a_{0})=0$.

The proof is complete.
\end{proof}
Be analogue to the case in characteristic 2, here we give the involutive
double extension theorem of an involutive quadratic Hom-Lie algebra
$V$ as follows:

\begin{theorem} (Involutive Double Extension Theorem) Let $(V,[\cdot,\cdot]_{V},\alpha_{V},B_{V})$
be an involutive quadratic Hom-Lie algebra and $(A,[\cdot,\cdot]_{A},\alpha_{A})$
an involutive Hom-Lie algebra.

Let $\phi:A\rightarrow\mathrm{End}_{a}(V,B_{V})$, $a\mapsto\phi(a)$
be a representation of $A$ on $(V,[\cdot,\cdot]_{V},\alpha_{V})$
such that
\begin{align*}
\alpha_{V}\circ\phi(a)([x,y]_{V})= & [\phi(a)\circ\alpha_{V}(x),y]_{V}+[x,\phi(a)\circ\alpha_{V}(y)]_{V},\:\forall x,y\in V,\\
\phi\circ\alpha_{A}(a)= & \alpha_{V}\circ\phi(a)\circ\alpha_{V},\:\forall a\in A
\end{align*}
and $\psi:V\times V\rightarrow A^{*},(x,y)\mapsto\psi(x,y)$ defined
by $\psi(x,y)(a)=B_{V}(\phi(a)x,y),\forall x,y\in V,\forall a\in A$.

(1) There exists an involutive multiplicative Hom-Lie algebra structure
on $L=A^{*}\oplus V\oplus A$, where the bracket $[\cdot,\cdot]:L\times L\rightarrow L$
is defined as follows (for any $f,f'\in A^{*},\:x,x'\in V$ and $a,a'\in A$):

\begin{align*}
[f+x+a,f'+x'+a']= & -f\circ\mathrm{ad}_{A}(a')+f'\circ\mathrm{ad}_{A}(a)+\psi(x,x')+[x,x']_{V}\\
 & +\phi(a)(x')-\phi(a')(x)+[a,a']_{A}
\end{align*}
and the linear map $\alpha:L\rightarrow L$ is defined by

\[
\alpha(f+x+a)=f\circ\alpha_{A}+\alpha_{V}(x)+\alpha_{A}(a).
\]

(2) If the bilinear form $B_{\sigma}:L\times L\rightarrow\mathbb{F}$
is defined by

\[
B_{\sigma}(f+x+a,f'+x'+a')=B_{V}(x,x')+f(a')+f'(a)+\sigma(a,a'),
\]
where $\sigma$ is a symmetric nondegenerate invariant bilinear form
on $(A,[\cdot,\cdot]_{A},\alpha_{A})$ such that
\[
\sigma(\alpha_{A}(a),a')=\sigma(a,\alpha_{A}(a')),\forall a,a'\in A,
\]
then the quadruple $(L,[\cdot,\cdot],\alpha,B_{\sigma})$ is an involutive
multiplicative quadratic Hom-Lie algebra.

\end{theorem}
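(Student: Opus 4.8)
The plan is to run the proof of Theorem 3.7 essentially verbatim, the only real change being the bookkeeping of signs forced by odd characteristic: in characteristic $2$ the identification $+1=-1$ collapsed the skew-symmetric bracket into a symmetric one, whereas for $p>2$ genuine antisymmetry of $[\cdot,\cdot]$ must be installed by hand, which is precisely why the terms $-f\circ\mathrm{ad}_A(a')$ and $-\phi(a')(x)$ now carry minus signs. First I would check antisymmetry. The $A$-part gives $[a,a']_A+[a',a]_A=0$ and the $V$-part cancels by antisymmetry of $[\cdot,\cdot]_V$ together with the paired $\pm\phi$ terms; the only genuinely new point is that $\psi$ is skew, which follows from $\phi(a)\in\mathrm{End}_a(V,B_V)$ and the symmetry of $B_V$, since $\psi(x,x')(a)=B_V(\phi(a)x,x')=-B_V(x,\phi(a)x')=-\psi(x',x)(a)$.

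The core of part (1) is the Hom-Jacobi identity $[\alpha(g_1),[g_2,g_3]]+[\alpha(g_2),[g_3,g_1]]+[\alpha(g_3),[g_1,g_2]]=0$. I would decompose the cyclic sum into its $A$-, $V$-, and $A^*$-components. The $A$-component is just the Hom-Jacobi identity of $(A,[\cdot,\cdot]_A,\alpha_A)$, and the $V$-component reduces to the Hom-Jacobi identity of $V$ together with the two representation axioms of Definition 2.5 applied to $\phi$ (now with the $-\phi(a')$ convention). The $A^*$-component is where I expect the main obstacle: the coadjoint terms $f_i\circ\mathrm{ad}_A$, the $\psi(x_i,x_j)$ terms, and their mixed products must all cancel. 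I plan to evaluate this component against an arbitrary $a\in A$, rewrite $\psi(x,x')(a)=B_V(\phi(a)x,x')$, and then exploit the skew-symmetry of $\phi(a)$, the invariance identity $\alpha_V\circ\phi(a)([x,y]_V)=[\phi(a)\circ\alpha_V(x),y]_V+[x,\phi(a)\circ\alpha_V(y)]_V$, and the twisting relation $\phi\circ\alpha_A=\alpha_V\circ\phi\circ\alpha_V$; here the odd-characteristic signs are exactly what makes the terms pair off.

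For multiplicativity I would match $\alpha([g_1,g_2])$ with $[\alpha(g_1),\alpha(g_2)]$ term by term, reusing Lemma 3.6 (whose proof uses only the representation property and skew-symmetry, hence is valid for $p>2$) in the form $\psi(x_1,x_2)\circ\alpha_A=\psi(\alpha_V(x_1),\alpha_V(x_2))$, the identity $f\circ\alpha_A\circ\mathrm{ad}_A(\alpha_A(a))=f\circ\mathrm{ad}_A(a)\circ\alpha_A$ (a consequence of $\alpha_A^2=\mathrm{id}$), and $\alpha_V\circ\phi(a)=\phi(\alpha_A(a))\circ\alpha_V$ (from $\alpha_V^2=\mathrm{id}$ and the twisting relation). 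Involutivity $\alpha^2=\mathrm{id}$ is then immediate from $\alpha_V^2=\mathrm{id}$, $\alpha_A^2=\mathrm{id}$ and $f\circ\alpha_A^2=f$.

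For part (2), symmetry and nondegeneracy of $B_\sigma$ follow from those of $B_V$, $\sigma$, and the canonical pairing $A\times A^*\to\mathbb{F}$; invariance $B_\sigma([g_1,g_2],g_3)=B_\sigma(g_1,[g_2,g_3])$ follows from invariance of $B_V$, of $\sigma$, and of the coadjoint pairing, once more using $\psi(x,x')(a)=B_V(\phi(a)x,x')$ and skew-symmetry of $\phi(a)$. Finally, the compatibility Eq. (2.1), $B_\sigma(\alpha(g_1),g_2)=B_\sigma(g_1,\alpha(g_2))$, reduces term by term to $B_V(\alpha_V(x_1),x_2)=B_V(x_1,\alpha_V(x_2))$ (the quadratic property of $V$), to $\sigma(\alpha_A(a_1),a_2)=\sigma(a_1,\alpha_A(a_2))$ (the hypothesis on $\sigma$), and to the trivial identity $(f\circ\alpha_A)(a')=f(\alpha_A(a'))$, exactly as in Theorem 3.7.
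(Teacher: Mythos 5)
Your strategy is, in substance, the paper's own: for this theorem the paper gives no computation at all, only the line that ``the proof is similar to the proof of Theorem 6.10 in \citep{BM}'', and what that citation unpacks to is exactly what you describe — the proof of Theorem 3.7 transported to $p>2$ (antisymmetry, Hom-Jacobi checked componentwise, multiplicativity via Lemma 3.6 together with $f\circ\alpha_A\circ\mathrm{ad}_A(\alpha_A(a))=f\circ\mathrm{ad}_A(a)\circ\alpha_A$, involutivity, then part (2)). The steps you actually substantiate — skewness of $\psi$ from $\phi(a)\in\mathrm{End}_a(V,B_V)$, the multiplicativity identities, involutivity, and all of part (2) — are correct and match the paper's treatment of the characteristic-$2$ case.

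The gap is that the one computation you defer on faith — the $A^{*}$-component of the Hom-Jacobi identity, which you yourself call ``the main obstacle'' and dispose of with ``the odd-characteristic signs are exactly what makes the terms pair off'' — is precisely where the statement, taken literally, breaks, and your proposal asserts rather than verifies the cancellation. With the paper's convention $\mathrm{ad}_A(a)=[a,\cdot]_A$ (the one used in Theorems 3.1 and 4.1), the printed terms give $[a,f]=f\circ\mathrm{ad}_A(a)$, which is an \emph{anti}-representation of $A$ on $A^{*}$, not a representation. Already in the untwisted case ($\alpha_A=\mathrm{id}$, $V=0$) the cyclic sum for a triple $(f,a,a')$ is
\[
[f,[a,a']_A]+[a,[a',f]]+[a',[f,a]]
=f\circ\bigl(-\mathrm{ad}_A([a,a']_A)-[\mathrm{ad}_A(a),\mathrm{ad}_A(a')]\bigr)
=-2\,f\circ\mathrm{ad}_A([a,a']_A),
\]
which is nonzero for $p>2$ whenever $[a,a']_A\notin\mathfrak{z}(A)$; in the twisted involutive case the evaluation against $b\in A$ that you propose gives $2f\bigl([a',[\alpha_A(a),b]_A]_A-[a,[\alpha_A(a'),b]_A]_A\bigr)$ by $A$'s Hom-Jacobi identity, again nonzero in general. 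The terms pair off only if the coadjoint part carries the genuine coadjoint action, i.e.\ the terms $+f\circ\mathrm{ad}_A(a')-f'\circ\mathrm{ad}_A(a)$ (equivalently, if $\mathrm{ad}_A$ is read as the right adjoint $[\cdot,a]_A$), which is the convention behind Theorem 6.10 of \citep{BM}. Note that you do make the analogous correction for $\phi$ — the $V$-component forces Sheng's representation axiom with a minus sign, $\phi([a,a']_A)\circ\beta=\phi(\alpha_A(a))\phi(a')-\phi(\alpha_A(a'))\phi(a)$, rather than the plus sign printed in Definition 2.5 — but the matching sign correction for the coadjoint terms is absent from your plan, so as written your proof fails at exactly the step you left unexecuted; carrying out your own proposed evaluation against an arbitrary element of $A$ is what would have exposed it.
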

\begin{proof}
The proof is similar to the proof of Theorem 6.10 in \citep{BM}.
\end{proof}
Next, the adapted isomorphism of the double extensions of $(V,[\cdot,\cdot]_{V},\alpha_{V},B_{V})$
will be studied. Here, we will use the same notations as the corresponding
part in Sec. 3.

Be similar to \citep{FS} and the case for $p=2$ in Sec. 3, we obtain
the following conclusion:

Let $B_{V}$ be $\mathscr{D}$- and $\widetilde{\mathscr{D}}$-invariant,
where $\mathscr{D},\mathscr{\widetilde{D}}\in\mathrm{Der}(V)$. Then
$\pi:L\rightarrow\widetilde{L}$ is an adapted isomorphism if and
only if there exists exactly an automorphism $\pi_{0}:V\rightarrow V,\gamma\in\mathbb{F}$\textbackslash\{0\}
and $t_{\pi}\in V$ (depending only on $\pi$) such that

\begin{align*}
\pi_{0}^{-1}\mathscr{\widetilde{D}}\pi_{0} & =\gamma\mathscr{D}+\mathrm{ad}_{V}(t_{\pi}),\\
\widetilde{\lambda} & =\lambda,\\
\pi_{0}(\alpha_{V}(t_{\pi})-\lambda t_{\pi}) & =\widetilde{x}_{0}-\gamma\pi_{0}(x_{0}),\\
B_{V}(\widetilde{x}_{0},\pi_{0}(t_{\pi}))+\gamma B_{V}(x_{0},t_{\pi}) & =\widetilde{\lambda}_{0}-\gamma^{2}\lambda_{0},\\
B_{V}(\widetilde{x}_{0},\pi_{0}(v))-\gamma B_{V}(x_{0},v)) & =B_{V}(t_{\pi},\alpha_{V}(v)-\lambda v)
\end{align*}
and

\begin{align}
\pi(u) & =\pi_{0}(u)+B_{V}(t_{\pi},u)\widetilde{e},\forall u\in V,\nonumber \\
\pi(e) & =\gamma\widetilde{e},\\
\pi(e^{*}) & =\gamma^{-1}(\widetilde{e}^{*}-\pi_{0}(t_{\pi})-\frac{1}{2}B_{V}(t_{\pi},t_{\pi})\widetilde{e}).\nonumber
\end{align}

Suppose now that $(V,[\cdot,\cdot]_{V},\alpha_{V},B_{V})$ is a restricted
involutive quadratic Hom-Lie algebra with a $p$-structure $[p]_{V}$.
According to Theorem 4.4, the $p$-structure $[p]_{V}$ can be extended
to any double extension of $V$. Denote by $[p]_{L}$ (resp. $[p]_{\widetilde{L}}$)
the $p$-structure on $L$ (resp. $\widetilde{L}$) written in terms
of $\xi,m,l,a_{0},u_{0}$ and $\mathscr{P}$ (resp. $\widetilde{\xi},\widetilde{m},\widetilde{l},\widetilde{a}_{0},\widetilde{u}_{0}$
and $\mathscr{\widetilde{P}}$).

We will study the equivalence class of $p$-structures on double extensions.
To this end, we should first make some preparations.

For $x,y\in\widetilde{L},k\in\mathbb{F}$, $l\in\mathbb{N}$ and $l\geqslant3$,
let
\[
\mathrm{ad}(\alpha^{l-2}(kx+y))\circ\mathrm{ad}(\alpha^{l-3}(kx+y))\circ\cdots\circ\mathrm{ad}(kx+y)(x)=\sum_{i=1}^{l-1}\Phi_{i}^{l}(x,y)k^{i-1}.
\]

\begin{lemma} Let $x,y\in\widetilde{L}$ and $l\geqslant4$, then
we have

\[
\Phi_{1}^{3}(x,y)=[\alpha(y),[y,x]],\Phi_{2}^{3}(x,y)=[\alpha(x),[y,x]]
\]

and
\begin{equation}
\Phi_{i}^{l}(x,y)=\begin{cases}
\mathrm{ad}(\alpha^{l-2}(y))\circ\mathrm{ad}(\alpha^{l-3}(y))\circ\cdots\circ\mathrm{ad}(\alpha(y))([y,x]), & i=1,\\
\mathrm{ad}(\alpha^{l-2}(y))(\Phi_{i}^{l-1}(x,y))+\mathrm{ad}(\alpha^{l-2}(x))(\Phi_{i-1}^{l-1}(x,y)), & 2\leqslant i\leqslant l-2,\\
\mathrm{ad}(\alpha^{l-2}(x))\circ\mathrm{ad}(\alpha^{l-3}(x))\circ\cdots\circ\mathrm{ad}(\alpha(x))([y,x]), & i=l-1.
\end{cases}
\end{equation}
\end{lemma}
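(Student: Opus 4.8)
The plan is to prove the two claims together by induction on $l$, treating the whole iterated bracket as a polynomial identity in $k$. To this end I set
\[
F_l(k):=\mathrm{ad}(\alpha^{l-2}(kx+y))\circ\mathrm{ad}(\alpha^{l-3}(kx+y))\circ\cdots\circ\mathrm{ad}(kx+y)(x),
\]
so that $F_l(k)=\sum_{i=1}^{l-1}\Phi_i^l(x,y)k^{i-1}$ by definition. The only inputs required are the bilinearity of $[\cdot,\cdot]$ and the linearity of each iterate $\alpha^{j}$; in particular neither the Hom-Jacobi identity nor the multiplicativity of $\alpha$ enters, since I only rearrange formal iterated brackets.

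First I would settle the base case $l=3$ by direct computation. Since $p>2$ forces $[x,x]=0$, one has $\mathrm{ad}(kx+y)(x)=[kx+y,x]=[y,x]$, whence
\[
F_3(k)=\mathrm{ad}(\alpha(kx+y))([y,x])=[\alpha(y),[y,x]]+k\,[\alpha(x),[y,x]],
\]
using $\alpha(kx+y)=k\alpha(x)+\alpha(y)$. Reading off the coefficients of $k^{0}$ and $k^{1}$ gives $\Phi_1^3(x,y)=[\alpha(y),[y,x]]$ and $\Phi_2^3(x,y)=[\alpha(x),[y,x]]$.

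For the inductive step $l\geq4$ the key observation is the factorisation $F_l(k)=\mathrm{ad}(\alpha^{l-2}(kx+y))\bigl(F_{l-1}(k)\bigr)$, obtained simply by peeling off the outermost operator. Writing $\mathrm{ad}(\alpha^{l-2}(kx+y))=k\,\mathrm{ad}(\alpha^{l-2}(x))+\mathrm{ad}(\alpha^{l-2}(y))$ and inserting the inductive expansion $F_{l-1}(k)=\sum_{j=1}^{l-2}\Phi_j^{l-1}k^{j-1}$, I obtain
\[
F_l(k)=\sum_{j=1}^{l-2}\mathrm{ad}(\alpha^{l-2}(y))(\Phi_j^{l-1})\,k^{j-1}+\sum_{j=1}^{l-2}\mathrm{ad}(\alpha^{l-2}(x))(\Phi_j^{l-1})\,k^{j}.
\]
Extracting the coefficient of $k^{i-1}$ then yields exactly the three stated cases: the first sum contributes $\mathrm{ad}(\alpha^{l-2}(y))(\Phi_i^{l-1})$ for $1\leq i\leq l-2$, while the second contributes $\mathrm{ad}(\alpha^{l-2}(x))(\Phi_{i-1}^{l-1})$ for $2\leq i\leq l-1$; hence both terms occur for the interior range $2\leq i\leq l-2$, only the first at $i=1$, and only the second at $i=l-1$.

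It then remains to reconcile the boundary recursions $\Phi_1^l=\mathrm{ad}(\alpha^{l-2}(y))(\Phi_1^{l-1})$ and $\Phi_{l-1}^l=\mathrm{ad}(\alpha^{l-2}(x))(\Phi_{l-2}^{l-1})$ with the claimed closed forms; unfolding them down to the $l=3$ values immediately gives the compositions $\mathrm{ad}(\alpha^{l-2}(y))\circ\cdots\circ\mathrm{ad}(\alpha(y))([y,x])$ and $\mathrm{ad}(\alpha^{l-2}(x))\circ\cdots\circ\mathrm{ad}(\alpha(x))([y,x])$. There is no genuine obstacle here; the whole argument is bookkeeping, and the one point demanding care is the matching of the index ranges of the two sums, which is precisely what forces the single contribution at each boundary and hence the three-case split.
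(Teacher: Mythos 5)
Your proposal is correct and follows essentially the same route as the paper: a direct computation for the base case $l=3$, then an induction that peels off the outermost operator $\mathrm{ad}(\alpha^{l-2}(kx+y))$, expands it by bilinearity into $k\,\mathrm{ad}(\alpha^{l-2}(x))+\mathrm{ad}(\alpha^{l-2}(y))$, and compares coefficients of powers of $k$, with the boundary cases $i=1$ and $i=l-1$ recovered by unfolding the resulting one-term recursions. The only difference is cosmetic: you make explicit two points the paper leaves implicit, namely that $[x,x]=0$ gives $\mathrm{ad}(kx+y)(x)=[y,x]$, and that the closed composition forms at the boundaries follow from iterating the recursion down to $l=3$.
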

\begin{proof}
It follows from

\[
\mathrm{ad}(\alpha(kx+y))\circ\mathrm{ad}(kx+y)(x)=[\alpha(y),[y,x]]+[\alpha(x),[y,x]]k,
\]
and
\[
\mathrm{ad}(\alpha(kx+y))\circ\mathrm{ad}(kx+y)(x)=\Phi_{1}^{3}(x,y)+\Phi_{2}^{3}(x,y)k,
\]
that $\Phi_{1}^{3}(x,y)=[\alpha(y),[y,x]]$ and $\Phi_{2}^{3}(x,y)=[\alpha(x),[y,x]]$.

Moreover, we have

\begin{align*}
 & \mathrm{ad}(\alpha^{l-2}(kx+y))\circ\mathrm{ad}(\alpha^{l-3}(kx+y))\circ\cdots\circ\mathrm{ad}(kx+y)(x)\\
= & \mathrm{ad}(\alpha^{l-2}(kx+y))(\sum_{i=1}^{l-2}\Phi_{i}^{l-1}(x,y)k^{i-1})\\
= & [k\alpha^{l-2}(x),\sum_{i=1}^{l-2}\Phi_{i}^{l-1}(x,y)k^{i-1}]+[\alpha^{l-2}(y),\sum_{i=1}^{l-2}\Phi_{i}^{l-1}(x,y)k^{i-1}]\\
= & \sum_{i=1}^{l-2}\mathrm{ad}(\alpha^{l-2}(x))(\Phi_{i}^{l-1}(x,y))k^{i}+\sum_{i=1}^{l-2}\mathrm{ad}(\alpha^{l-2}(y))(\Phi_{i}^{l-1}(x,y))k^{i-1}\\
= & \sum_{i=2}^{l-1}\mathrm{ad}(\alpha^{l-2}(x))(\Phi_{i-1}^{l-1}(x,y))k^{i-1}+\sum_{i=1}^{l-2}\mathrm{ad}(\alpha^{l-2}(y))(\Phi_{i}^{l-1}(x,y))k^{i-1}\\
= & \mathrm{ad}(\alpha^{l-2}(y))(\Phi_{1}^{l-1}(x,y))+\mathrm{ad}(\alpha^{l-2}(x))(\Phi_{l-2}^{l-1}(x,y))k^{l-2}\\
 & +\sum_{i=2}^{l-2}(\mathrm{ad}(\alpha^{l-2}(x))(\Phi_{i-1}^{l-1}(x,y))+\mathrm{ad}(\alpha^{l-2}(y))(\Phi_{i}^{l-1}(x,y)))k^{i-1}.
\end{align*}

Then Eq. (4.8) is obtained by comparing the coefficients and induction.

The proof is complete.
\end{proof}
Set $x:=\widetilde{e}^{\ast},y:=-\pi_{0}(t_{\pi})$. The following
lemma can be obtained by a direct computation.

\begin{lemma} For any $f\in\widetilde{L}$ and $k\in\mathbb{N}$,
we have

(1) $\mathrm{ad}(\alpha^{k}(x))(f)=\mathrm{ad}(x+\sum_{i=1}^{k-1}\alpha_{V}^{i}(\widetilde{x}_{0}))(f)$.

(2) $\mathrm{ad}(\alpha^{k}(y))(f)=\mathrm{ad}(\alpha_{V}^{k}(y))(f)$
.

\end{lemma}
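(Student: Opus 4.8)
The plan is to compute the iterates $\alpha^{k}(x)$ and $\alpha^{k}(y)$ of the twist $\alpha$ of $\widetilde{L}$ explicitly by induction on $k$, and then apply $\mathrm{ad}$, exploiting the fact that two of the resulting summands lie in the center of $\widetilde{L}$ and are therefore annihilated by $\mathrm{ad}$. Throughout we are in the situation $\widetilde{\lambda}=\lambda=1$ of Theorem 4.4, so on $\widetilde{L}$ the twist acts by $\alpha(\widetilde{e})=\widetilde{e}$, $\alpha(\widetilde{e}^{*})=\widetilde{e}^{*}+\widetilde{x}_{0}+\widetilde{\lambda}_{0}\widetilde{e}$ and $\alpha(u)=\alpha_{V}(u)+B_{V}(\widetilde{x}_{0},u)\widetilde{e}$ for $u\in V$, as in Theorem 4.1.

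First I would establish, by induction on $k$, the explicit formulas
\[
\alpha^{k}(\widetilde{e}^{*})=\widetilde{e}^{*}+\sum_{i=0}^{k-1}\alpha_{V}^{i}(\widetilde{x}_{0})+c_{k}\widetilde{e},\qquad \alpha^{k}(y)=\alpha_{V}^{k}(y)+d_{k}\widetilde{e},
\]
for suitable scalars $c_{k},d_{k}\in\mathbb{F}$. The inductive step is immediate from the displayed action of $\alpha$ together with $\alpha(\alpha_{V}^{i}(\widetilde{x}_{0}))=\alpha_{V}^{i+1}(\widetilde{x}_{0})+B_{V}(\widetilde{x}_{0},\alpha_{V}^{i}(\widetilde{x}_{0}))\widetilde{e}$ and $\alpha(\widetilde{e})=\widetilde{e}$; at each step only the coefficient of the central vector $\widetilde{e}$ is updated, so no new $V$-components or $\widetilde{e}^{*}$-components are created.

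The second ingredient is that both $\widetilde{e}$ and $\widetilde{x}_{0}=\alpha_{V}^{0}(\widetilde{x}_{0})$ belong to $\mathfrak{z}(\widetilde{L})$. For $\widetilde{e}$ this is its defining property. For $\widetilde{x}_{0}$, since $\widetilde{\lambda}=1$, Remark 4.2 yields $\widetilde{x}_{0}\in\mathfrak{z}(V)$ and $\widetilde{\mathscr{D}}(\widetilde{x}_{0})=0$; substituting these into the bracket of $\widetilde{L}$ gives $[\widetilde{x}_{0},\widetilde{e}^{*}]=-\widetilde{\mathscr{D}}(\widetilde{x}_{0})=0$, $[\widetilde{x}_{0},u]=[\widetilde{x}_{0},u]_{V}+B_{V}(\widetilde{\mathscr{D}}(\widetilde{x}_{0}),u)\widetilde{e}=0$ and $[\widetilde{x}_{0},\widetilde{e}]=0$, so that $\mathrm{ad}(\widetilde{x}_{0})=0$ on all of $\widetilde{L}$. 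Applying $\mathrm{ad}$ to the two formulas of the previous paragraph and discarding the terms $c_{k}\widetilde{e}$ and $d_{k}\widetilde{e}$ (via $\mathrm{ad}(\widetilde{e})=0$) together with the $i=0$ summand $\alpha_{V}^{0}(\widetilde{x}_{0})=\widetilde{x}_{0}$ (via $\mathrm{ad}(\widetilde{x}_{0})=0$) produces precisely the identities (1) and (2), valid as equalities of operators and hence on every $f\in\widetilde{L}$.

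The computation itself is routine; the only point needing care is to notice that $\widetilde{x}_{0}$, and not merely $\widetilde{e}$, is central in $\widetilde{L}$, which is exactly what permits dropping the $i=0$ term while retaining the sum $\sum_{i=1}^{k-1}\alpha_{V}^{i}(\widetilde{x}_{0})$ in (1). One could in fact erase that sum as well, since $\mathfrak{z}(V)$ is $\alpha_{V}$-stable and $\widetilde{\mathscr{D}}$ commutes with $\alpha_{V}$, so each $\alpha_{V}^{i}(\widetilde{x}_{0})$ is central; but the form stated above is the one convenient for the subsequent application with the recursion of the preceding lemma.
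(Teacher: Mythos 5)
Your proof is correct and is exactly the ``direct computation'' that the paper omits: the paper states this lemma with no proof at all, and your argument --- induction giving $\alpha^{k}(\widetilde{e}^{*})=\widetilde{e}^{*}+\sum_{i=0}^{k-1}\alpha_{V}^{i}(\widetilde{x}_{0})+c_{k}\widetilde{e}$ and $\alpha^{k}(y)=\alpha_{V}^{k}(y)+d_{k}\widetilde{e}$, followed by applying $\mathrm{ad}$ and killing the central terms --- is the intended computation, and it is consistent with how the paper itself expands $\alpha^{p-1}(f)$ in the proof of Theorem 4.4. You also correctly isolate the one non-obvious ingredient, namely that $\widetilde{x}_{0}$ (not just $\widetilde{e}$) is central in $\widetilde{L}$, which follows from Remark 4.2 in the ambient hypothesis $\widetilde{\lambda}=\lambda=1$ of Theorem 4.4.
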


\begin{proposition} Let $l\geq4$, then we have the following iterative
formula
\begin{equation}
\Phi_{1}^{l}(x,y)=\Phi_{1}^{l}(x,y)_{\mathrm{I}}+\Phi_{1}^{l}(x,y)_{\mathrm{II}}\widetilde{e},
\end{equation}
where $\Phi_{1}^{l}(x,y)_{\mathrm{I}}=[\alpha_{V}^{l-2}(y),\Phi_{1}^{l-1}(x,y)_{\mathrm{I}}]_{V}$,
$\Phi_{1}^{l}(x,y)_{\mathrm{II}}=B_{V}(\widetilde{\mathscr{D}}(\alpha_{V}^{l-2}(y)),\Phi_{1}^{l-1}(x,y)_{\mathrm{I}})$
and $\Phi_{1}^{3}(x,y)_{\mathrm{I}}=[\alpha_{V}(y),-\widetilde{\mathscr{D}}(y)]_{V}$.

\end{proposition}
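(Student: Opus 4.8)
The plan is to prove the formula by induction on $l$, exploiting the $i=1$ branch of the recursion in Lemma 4.9 together with the explicit bracket and twist on the double extension $\widetilde{L}=\widetilde{\mathscr{E}}^{*}\oplus V\oplus\widetilde{\mathscr{E}}$. Throughout I would carry the \emph{strengthened} inductive hypothesis that $\Phi_{1}^{l-1}(x,y)_{\mathrm{I}}\in V$, since it is precisely this membership that licenses applying the bilinear double-extension bracket formula $[u,v]=[u,v]_{V}+B_{V}(\widetilde{\mathscr{D}}(u),v)\widetilde{e}$ (valid for $u,v\in V$) at each stage.

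First I would record that, with $x=\widetilde{e}^{*}$ and $y=-\pi_{0}(t_{\pi})\in V$, the argument $x$ enters $\Phi_{1}^{l}(x,y)$ only through $[y,x]$. Since $[\widetilde{e}^{*},v]=\widetilde{\mathscr{D}}(v)$ for $v\in V$, we get $[y,x]=[y,\widetilde{e}^{*}]=-\widetilde{\mathscr{D}}(y)\in V$. For the base case $l=3$, Lemma 4.9 gives $\Phi_{1}^{3}(x,y)=[\alpha(y),[y,x]]=[\alpha(y),-\widetilde{\mathscr{D}}(y)]$; using $\alpha(y)=\alpha_{V}(y)+B_{V}(\widetilde{x}_{0},y)\widetilde{e}$ together with the centrality of $\widetilde{e}$, this reduces to $[\alpha_{V}(y),-\widetilde{\mathscr{D}}(y)]$, and the double-extension bracket formula then splits it into its $V$- and $\widetilde{e}$-components, yielding $\Phi_{1}^{3}(x,y)_{\mathrm{I}}=[\alpha_{V}(y),-\widetilde{\mathscr{D}}(y)]_{V}$ as required.

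For the inductive step, the $i=1$ case of Lemma 4.9 gives $\Phi_{1}^{l}(x,y)=\mathrm{ad}(\alpha^{l-2}(y))(\Phi_{1}^{l-1}(x,y))$. By Lemma 4.10(2) the operator $\mathrm{ad}(\alpha^{l-2}(y))$ coincides with $\mathrm{ad}(\alpha_{V}^{l-2}(y))$, because $\alpha^{l-2}(y)-\alpha_{V}^{l-2}(y)\in\widetilde{\mathscr{E}}$ is central. Invoking the inductive hypothesis $\Phi_{1}^{l-1}(x,y)=\Phi_{1}^{l-1}(x,y)_{\mathrm{I}}+\Phi_{1}^{l-1}(x,y)_{\mathrm{II}}\widetilde{e}$ with $\Phi_{1}^{l-1}(x,y)_{\mathrm{I}}\in V$, the central $\widetilde{e}$-summand is annihilated by $\mathrm{ad}$, so $\Phi_{1}^{l}(x,y)=[\alpha_{V}^{l-2}(y),\Phi_{1}^{l-1}(x,y)_{\mathrm{I}}]$. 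Both arguments now lie in $V$, so one final application of $[u,v]=[u,v]_{V}+B_{V}(\widetilde{\mathscr{D}}(u),v)\widetilde{e}$ separates the two components and produces exactly $\Phi_{1}^{l}(x,y)_{\mathrm{I}}=[\alpha_{V}^{l-2}(y),\Phi_{1}^{l-1}(x,y)_{\mathrm{I}}]_{V}$ and $\Phi_{1}^{l}(x,y)_{\mathrm{II}}=B_{V}(\widetilde{\mathscr{D}}(\alpha_{V}^{l-2}(y)),\Phi_{1}^{l-1}(x,y)_{\mathrm{I}})$, with the new $\mathrm{I}$-component again in $V$, closing the induction.

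The computation is essentially bookkeeping, and I do not anticipate a genuine obstacle; the only point that needs care is maintaining the invariant that the $\mathrm{I}$-component stays in $V$ at every stage, which is what allows both Lemma 4.10(2) and the bilinear bracket formula to be reapplied in the next iteration. I would therefore keep that membership as an explicit clause of the inductive hypothesis rather than treating it as tacit.
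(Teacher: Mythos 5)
Your proposal is correct and follows essentially the same route as the paper's own proof: induction on $l$ via the $i=1$ case of the recursion lemma, replacement of $\mathrm{ad}(\alpha^{l-2}(y))$ by $\mathrm{ad}(\alpha_{V}^{l-2}(y))$ using the centrality of $\widetilde{e}$, and a final split of $[\alpha_{V}^{l-2}(y),\Phi_{1}^{l-1}(x,y)_{\mathrm{I}}]$ into its $V$- and $\widetilde{e}$-components by the double-extension bracket formula. Your only deviation is cosmetic: you make explicit the invariant $\Phi_{1}^{l-1}(x,y)_{\mathrm{I}}\in V$ that the paper leaves tacit, and you fold the paper's separately stated $l=4$ verification into the general inductive step.
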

\begin{proof}
According to Lemmas 4.7 and 4.8, we have
\begin{align*}
\Phi_{1}^{3}(x,y) & =[\alpha(y),[y,x]]=[\alpha_{V}(y),-\widetilde{\mathscr{D}}(y)]\\
 & =[\alpha_{V}(y),-\widetilde{\mathscr{D}}(y)]_{V}+B_{V}(\widetilde{\mathscr{D}}(\alpha_{V}(y)),-\widetilde{\mathscr{D}}(y))\widetilde{e}.
\end{align*}

Let $\Phi_{1}^{3}(x,y)=\Phi_{1}^{3}(x,y)_{\mathrm{I}}+\Phi_{1}^{3}(x,y)_{\mathrm{II}}\widetilde{e}$,
then $\Phi_{1}^{3}(x,y)_{\mathrm{I}}=[\alpha_{V}(y),-\widetilde{\mathscr{D}}(y)]_{V}$.

For $i=4$, Eq. (4.9) holds by a direct computation.

Suppose now that Eq. (4.9) holds for $i=l-1$. Then for $i=l$, we
have
\begin{align*}
\Phi_{1}^{l}(x,y)= & \mathrm{ad}(\alpha^{l-2}(y))\circ\mathrm{ad}(\alpha^{l-3}(y))\circ\cdots\circ\mathrm{ad}(\alpha(y))([y,x])\\
= & \mathrm{ad}(\alpha^{l-2}(y))\Phi_{1}^{l-1}(x,y)\\
= & \mathrm{ad}(\alpha^{l-2}(y))(\Phi_{1}^{l-1}(x,y)_{\mathrm{I}}+\Phi_{1}^{l-1}(x,y)_{\mathrm{II}}\widetilde{e})\\
= & \mathrm{ad}(\alpha_{V}^{l-2}(y))(\Phi_{1}^{l-1}(x,y)_{\mathrm{I}}+\Phi_{1}^{l-1}(x,y)_{\mathrm{II}}\widetilde{e})\\
= & [\alpha_{V}^{l-2}(y),\Phi_{1}^{l-1}(x,y)_{\mathrm{I}}]\\
= & [\alpha_{V}^{l-2}(y),\Phi_{1}^{l-1}(x,y)_{\mathrm{I}}]_{V}+B_{V}(\widetilde{\mathscr{D}}(\alpha_{V}^{l-2}(y)),\Phi_{l}^{l-1}(x,y)_{\mathrm{I}})\widetilde{e}\\
= & \Phi_{1}^{l}(x,y)_{\mathrm{I}}+\Phi_{1}^{l}(x,y)_{\mathrm{II}}\widetilde{e},
\end{align*}
which implies that Eq. (4.9) holds for $i=l$.

According to induction, Eq. (4.9) holds for all $l\geq4$.

The proof is complete.
\end{proof}
Similarly, the following two theorems can be proved.

\begin{proposition} Let $l\geq4$ and $2\leqslant i\leqslant l-2$
, then we have the following iterative formula
\[
\Phi_{i}^{l}(x,y)=\Phi_{i}^{l}(x,y)_{\mathrm{I}}+\Phi_{i}^{l}(x,y)_{\mathrm{I}\mathrm{I}}\widetilde{e},
\]
where
\[
\Phi_{i}^{l}(x,y)_{\mathrm{I}}=[\alpha_{V}^{l-2}(y),\Phi_{i}^{l-1}(x,y)_{\mathrm{I}}]_{V}+\mathscr{\widetilde{D}}(\Phi_{i-1}^{l-1}(x,y)_{\mathrm{I}})+[\sum_{i=1}^{l-3}\alpha_{V}^{i}(\widetilde{x}_{0}),\Phi_{i-1}^{l-1}(x,y)_{\mathrm{I}}]_{V},
\]
\[
\Phi_{i}^{l}(x,y)_{\mathrm{I}\mathrm{I}}=B_{V}(\mathscr{\widetilde{D}}(\alpha_{V}^{l-2}(y)),\Phi_{i}^{l-1}(x,y)_{\mathrm{I}})+B_{V}(\mathscr{\widetilde{D}}(\sum_{i=1}^{l-3}\alpha_{V}^{i}(\widetilde{x}_{0})),\Phi_{i-1}^{l-1}(x,y)_{\mathrm{I}})
\]
\[
\Phi_{1}^{3}(x,y)_{\mathrm{I}}=[\alpha_{V}(y),-\mathscr{\widetilde{D}}(y)]_{V},\:\Phi_{2}^{3}(x,y)_{\mathrm{I}}=-\mathscr{\widetilde{D}}^{2}(y).
\]

\end{proposition}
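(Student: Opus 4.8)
The plan is to prove the formula by induction on $l$, running exactly parallel to the proof of Proposition 4.9 but invoking the \emph{middle} branch of the recursion (4.8) from Lemma 4.7 instead of its boundary branch. For the base case $l=4$ (where necessarily $i=2$) I would expand $\Phi_2^4(x,y)$ directly from (4.8), substituting the stated initial data $\Phi_1^3(x,y)_{\mathrm{I}}=[\alpha_{V}(y),-\mathscr{\widetilde{D}}(y)]_{V}$ and $\Phi_2^3(x,y)_{\mathrm{I}}=-\mathscr{\widetilde{D}}^2(y)$, and verifying that the resulting $V$-part and $\widetilde{e}$-coefficient match the claimed expressions. For the inductive step, assuming the decomposition holds at level $l-1$ for every admissible index, I would insert the two summands $\Phi_i^{l-1}(x,y)=\Phi_i^{l-1}(x,y)_{\mathrm{I}}+\Phi_i^{l-1}(x,y)_{\mathrm{II}}\widetilde{e}$ and $\Phi_{i-1}^{l-1}(x,y)=\Phi_{i-1}^{l-1}(x,y)_{\mathrm{I}}+\Phi_{i-1}^{l-1}(x,y)_{\mathrm{II}}\widetilde{e}$ into
\[
\Phi_i^l(x,y)=\mathrm{ad}(\alpha^{l-2}(y))(\Phi_i^{l-1}(x,y))+\mathrm{ad}(\alpha^{l-2}(x))(\Phi_{i-1}^{l-1}(x,y)).
\]

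The computational engine is identical to that of Proposition 4.9. Since $\widetilde{e}$ is central in $\widetilde{L}$, every operator $\mathrm{ad}(\cdot)$ annihilates the components $\Phi_i^{l-1}(x,y)_{\mathrm{II}}\widetilde{e}$ and $\Phi_{i-1}^{l-1}(x,y)_{\mathrm{II}}\widetilde{e}$, so only the $V$-parts survive. Next I would apply Lemma 4.8: part (2) rewrites $\mathrm{ad}(\alpha^{l-2}(y))$ as $\mathrm{ad}(\alpha_{V}^{l-2}(y))$, while part (1) rewrites $\mathrm{ad}(\alpha^{l-2}(x))$ as $\mathrm{ad}(x+\sum_{i=1}^{l-3}\alpha_{V}^{i}(\widetilde{x}_{0}))$, in which $x=\widetilde{e}^{\ast}$ acts on $V$ as $\mathrm{ad}(\widetilde{e}^{\ast})=\mathscr{\widetilde{D}}$. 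Finally, expanding each bracket by the bracket formula of Theorem 4.1 splits every term into its $V$-component together with an $\widetilde{e}$-coefficient of the form $B_{V}(\mathscr{\widetilde{D}}(\cdot),\cdot)$.

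Collecting the $V$-components from the three resulting pieces, namely $[\alpha_{V}^{l-2}(y),\Phi_i^{l-1}(x,y)_{\mathrm{I}}]_{V}$ from the $y$-branch together with $\mathscr{\widetilde{D}}(\Phi_{i-1}^{l-1}(x,y)_{\mathrm{I}})$ and $[\sum_{i=1}^{l-3}\alpha_{V}^{i}(\widetilde{x}_{0}),\Phi_{i-1}^{l-1}(x,y)_{\mathrm{I}}]_{V}$ from the $x$-branch, yields precisely $\Phi_i^l(x,y)_{\mathrm{I}}$; collecting the two $\widetilde{e}$-coefficients, and noting that the $\widetilde{e}^{\ast}$-part of the $x$-branch lands purely in $V$ and hence contributes nothing, yields $\Phi_i^l(x,y)_{\mathrm{II}}=B_{V}(\mathscr{\widetilde{D}}(\alpha_{V}^{l-2}(y)),\Phi_i^{l-1}(x,y)_{\mathrm{I}})+B_{V}(\mathscr{\widetilde{D}}(\sum_{i=1}^{l-3}\alpha_{V}^{i}(\widetilde{x}_{0})),\Phi_{i-1}^{l-1}(x,y)_{\mathrm{I}})$. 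This closes the induction.

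The main obstacle is bookkeeping rather than anything conceptual: one must track carefully which inner part, $\Phi_i^{l-1}$ or $\Phi_{i-1}^{l-1}$, is acted on by $\mathrm{ad}(\alpha^{l-2}(y))$ versus $\mathrm{ad}(\alpha^{l-2}(x))$, so that the $\widetilde{x}_{0}$-correction produced by Lemma 4.8(1) attaches \emph{only} to the $\Phi_{i-1}^{l-1}$-branch and not to the $\Phi_i^{l-1}$-branch. A secondary point requiring care is the interface with the extreme indices: when $i=2$ the term $\Phi_{i-1}^{l-1}=\Phi_1^{l-1}$ must be supplied by Proposition 4.9, and when $i=l-2$ the term $\Phi_i^{l-1}=\Phi_{l-2}^{l-1}$ sits at the top of the admissible range, so the induction hypothesis must be phrased to cover all middle indices $2\leqslant i\leqslant l-2$ simultaneously at each level.
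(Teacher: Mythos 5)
Your proposal is correct and follows essentially the same route the paper intends: the paper gives no separate proof of this proposition, stating only that it is proved ``similarly'' to Proposition 4.9, i.e.\ by induction on $l$ using the recursion of Lemma 4.7, Lemma 4.8 to replace $\mathrm{ad}(\alpha^{l-2}(y))$ by $\mathrm{ad}(\alpha_V^{l-2}(y))$ and $\mathrm{ad}(\alpha^{l-2}(x))$ by $\mathrm{ad}(\widetilde{e}^{*}+\sum\alpha_V^{i}(\widetilde{x}_0))$, centrality of $\widetilde{e}$, and the bracket formula of Theorem 4.1 to split each term into its $V$-part and $\widetilde{e}$-coefficient; your computation reproduces exactly the claimed expressions, including the observation that $[\widetilde{e}^{*},\cdot]=\mathscr{\widetilde{D}}(\cdot)$ contributes only to the $V$-part. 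One bookkeeping point in your closing remark needs adjusting: when $i=l-2$, the term $\Phi_{l-2}^{l-1}(x,y)$ is not a middle index at level $l-1$ (the middle range there is $2\leqslant i\leqslant l-3$) but the top boundary index, so its decomposition is supplied by Proposition 4.11 rather than by widening the middle-index hypothesis; the clean formulation is a single simultaneous induction on $l$ covering all indices $1\leqslant i\leqslant l-1$, with Propositions 4.9, 4.10 and 4.11 as its three branches.
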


\begin{proposition} Let $l\geq4$, then we have the following iterative
formula
\[
\Phi_{l-1}^{l}(x,y)=\Phi_{l-1}^{l}(x,y)_{\mathrm{I}}+\Phi_{l-1}^{l}(x,y)_{\mathrm{II}}\widetilde{e},
\]
where
\[
\Phi_{l-1}^{l}(x,y)_{\mathrm{I}}=\mathscr{\widetilde{D}}(\Phi_{l-2}^{l-1}(x,y)_{\mathrm{I}})+[\sum_{i=1}^{l-3}\alpha_{V}^{i}(\widetilde{x}_{0}),\Phi_{l-2}^{l-1}(x,y)_{\mathrm{I}}]_{V},
\]
\[
\Phi_{l-1}^{l}(x,y)_{\mathrm{II}}=B_{V}(\mathscr{\widetilde{D}}(\sum_{i=1}^{l-3}\alpha_{V}^{i}(\widetilde{x}_{0})),\Phi_{l-2}^{l-1}(x,y)_{\mathrm{I}})
\]
and
\[
\Phi_{2}^{3}(x,y)_{\mathrm{I}}=-\mathscr{\widetilde{D}}^{2}(y).
\]

\end{proposition}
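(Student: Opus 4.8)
The plan is to prove the claim by induction on $l$, in the same spirit as Propositions 4.9 and 4.10: first isolate a one-step recursion for the top coefficient $\Phi_{l-1}^{l}(x,y)$, then split it into its $V$-component and its $\widetilde{e}$-component through the bracket of the double extension $\widetilde{L}$. The starting observation is that the closed form for the top index (the third branch of Eq. (4.8)) factors as one extra application of $\mathrm{ad}(\alpha^{l-2}(x))$ on top of the level-$(l-1)$ top coefficient, i.e.
\[
\Phi_{l-1}^{l}(x,y)=\mathrm{ad}(\alpha^{l-2}(x))(\Phi_{l-2}^{l-1}(x,y)),
\]
which follows at once from Eq. (4.8) (equivalently, from reading off the coefficient of $k^{l-2}$ in the computation proving Lemma 4.7). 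This replaces the role played by $\mathrm{ad}(\alpha^{l-2}(y))$ in Proposition 4.9, and it is precisely this change in the acting element that makes $\widetilde{x}_{0}$ enter the formula here rather than $\alpha_{V}^{l-2}(y)$.

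Next I would rewrite the acting endomorphism by Lemma 4.8(1): since $x=\widetilde{e}^{*}$,
\[
\mathrm{ad}(\alpha^{l-2}(x))(f)=[\widetilde{e}^{*},f]+\Bigl[\textstyle\sum_{i=1}^{l-3}\alpha_{V}^{i}(\widetilde{x}_{0}),\,f\Bigr],\qquad f\in\widetilde{L}.
\]
By the inductive hypothesis $\Phi_{l-2}^{l-1}(x,y)=\Phi_{l-2}^{l-1}(x,y)_{\mathrm{I}}+\Phi_{l-2}^{l-1}(x,y)_{\mathrm{II}}\widetilde{e}$ with $\Phi_{l-2}^{l-1}(x,y)_{\mathrm{I}}\in V$; since $\widetilde{e}$ is central, the $\widetilde{e}$-summand contributes nothing to either bracket, so I may take $f=\Phi_{l-2}^{l-1}(x,y)_{\mathrm{I}}\in V$. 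Using $[\widetilde{e}^{*},v]=\mathscr{\widetilde{D}}(v)$ and the double-extension bracket $[u,v]=[u,v]_{V}+B_{V}(\mathscr{\widetilde{D}}(u),v)\widetilde{e}$ on $V$, the result separates into the $V$-part
\[
\mathscr{\widetilde{D}}(\Phi_{l-2}^{l-1}(x,y)_{\mathrm{I}})+\Bigl[\textstyle\sum_{i=1}^{l-3}\alpha_{V}^{i}(\widetilde{x}_{0}),\,\Phi_{l-2}^{l-1}(x,y)_{\mathrm{I}}\Bigr]_{V}
\]
and the $\widetilde{e}$-coefficient $B_{V}(\mathscr{\widetilde{D}}(\sum_{i=1}^{l-3}\alpha_{V}^{i}(\widetilde{x}_{0})),\Phi_{l-2}^{l-1}(x,y)_{\mathrm{I}})$, which are exactly the asserted $\Phi_{l-1}^{l}(x,y)_{\mathrm{I}}$ and $\Phi_{l-1}^{l}(x,y)_{\mathrm{II}}$. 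This also exhibits $\Phi_{l-1}^{l}(x,y)_{\mathrm{I}}\in V$, propagating the hypothesis to the next level.

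For the base of the induction ($l=4$, i.e. $\Phi_{2}^{3}$) I would compute directly from $\Phi_{2}^{3}(x,y)=[\alpha(x),[y,x]]$ in Lemma 4.7. With $x=\widetilde{e}^{*}$ and $y=-\pi_{0}(t_{\pi})\in V$ one gets $[y,x]=-\mathscr{\widetilde{D}}(y)$, while $\alpha(\widetilde{e}^{*})=\widetilde{e}^{*}+\widetilde{x}_{0}+\widetilde{\lambda}_{0}\widetilde{e}$ (recall $\widetilde{\lambda}=\lambda=1$), so that $\Phi_{2}^{3}(x,y)=-\mathscr{\widetilde{D}}^{2}(y)-[\widetilde{x}_{0},\mathscr{\widetilde{D}}(y)]$. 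The cross term vanishes because $\widetilde{x}_{0}\in\mathfrak{z}(V)$ and $\mathscr{\widetilde{D}}(\widetilde{x}_{0})=0$ by Remark 4.2, leaving $\Phi_{2}^{3}(x,y)_{\mathrm{I}}=-\mathscr{\widetilde{D}}^{2}(y)$ and $\Phi_{2}^{3}(x,y)_{\mathrm{II}}=0$. The remainder is then a routine induction; the only points that demand care are this base-case cancellation — where the centrality of $\widetilde{x}_{0}$ together with $\mathscr{\widetilde{D}}(\widetilde{x}_{0})=0$ is essential — and the consistent tracking of which contributions remain in $V$ and which collapse onto the central line $\mathbb{F}\widetilde{e}$. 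I expect no genuine obstacle beyond this bookkeeping, since all the structural inputs (the recursion (4.8), Lemma 4.8, and the explicit brackets of $\widetilde{L}$) are already available.
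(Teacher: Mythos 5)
Your proof is correct and is essentially the argument the paper intends: the paper omits the proof of this proposition, saying only that it is proved similarly to Proposition 4.9, and your induction --- reading off $\Phi_{l-1}^{l}(x,y)=\mathrm{ad}(\alpha^{l-2}(x))(\Phi_{l-2}^{l-1}(x,y))$ from the top branch of Eq. (4.8), then applying Lemma 4.8(1) and the bracket of $\widetilde{L}$ to split the result into its $V$-component and $\widetilde{e}$-component --- is exactly that analogue, with $\mathrm{ad}(\alpha^{l-2}(x))$ playing the role that $\mathrm{ad}(\alpha^{l-2}(y))$ plays in Proposition 4.9. Your base-case verification that the cross term $[\widetilde{x}_{0},\mathscr{\widetilde{D}}(y)]$ vanishes (via Remark 4.2, i.e. $\widetilde{x}_{0}\in\mathfrak{z}(V)$ and $\mathscr{\widetilde{D}}(\widetilde{x}_{0})=0$) even fills in a detail the paper leaves implicit when it asserts $\Phi_{2}^{3}(x,y)_{\mathrm{I}}=-\mathscr{\widetilde{D}}^{2}(y)$.
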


Based on the above discussion, we obtain

\begin{theorem} For $k\in\mathbb{F}$, denote by $s_{i}^{\widetilde{L}}(\widetilde{e}^{*},-\pi_{0}(t_{\pi}))$
the coefficient of the following expansion

\begin{align*}
 & \mathrm{ad}(\alpha^{p-2}(k\widetilde{e}^{*}-\pi_{0}(t_{\pi})))\circ\mathrm{ad}(\alpha^{p-3}(k\widetilde{e}^{*}-\pi_{0}(t_{\pi})))\circ\cdot\cdot\cdot\circ\mathrm{ad}(k\widetilde{e}^{*}-\pi_{0}(t_{\pi}))(\widetilde{e}^{*})\\
= & \sum_{i=1}^{p-1}is_{i}^{\widetilde{L}}(\widetilde{e}^{*},-\pi_{0}(t_{\pi}))k^{i-1}.
\end{align*}
Then

(1) for $p=3$, we have
\[
\sum_{i=1}^{2}s_{i}^{\widetilde{L}}(\widetilde{e}^{*},-\pi_{0}(t_{\pi}))=\sum_{i=1}^{2}\frac{1}{i}\Phi_{i}^{3}(\widetilde{e}^{*},-\pi_{0}(t_{\pi}))_{\mathrm{I}}+\sum_{i=1}^{2}\frac{1}{i}\Phi_{i}^{3}(\widetilde{e}^{*},-\pi_{0}(t_{\pi}))_{\mathrm{II}}\widetilde{e},
\]
where
\begin{align*}
\Phi_{1}^{3}(\widetilde{e}^{*},-\pi_{0}(t_{\pi}))_{\mathrm{I}}= & -[\alpha_{V}(\pi_{0}(t_{\pi})),\widetilde{\mathscr{D}}(\pi_{0}(t_{\pi}))]_{V},\Phi_{2}^{3}(\widetilde{e}^{*},-\pi_{0}(t_{\pi}))_{\mathrm{I}}=\widetilde{\mathscr{D}}^{2}(\pi_{0}(t_{\pi})),
\end{align*}
\[
\Phi_{1}^{3}(\widetilde{e}^{*},-\pi_{0}(t_{\pi}))_{\mathrm{II}}=-B_{V}(\widetilde{\mathscr{D}}(\alpha_{V}(\pi_{0}(t_{\pi}))),\widetilde{\mathscr{D}}(\pi_{0}(t_{\pi}))),\Phi_{2}^{3}(\widetilde{e}^{*},-\pi_{0}(t_{\pi}))_{\mathrm{II}}=0.
\]

(2) for $p>3$, we have

\[
\sum_{i=1}^{p-1}s_{i}^{\widetilde{L}}(\widetilde{e}^{*},-\pi_{0}(t_{\pi}))=\sum_{i=1}^{p-1}\frac{1}{i}\Phi_{i}^{p}(\widetilde{e}^{*},-\pi_{0}(t_{\pi}))_{\mathrm{I}}+\sum_{i=1}^{p-1}\frac{1}{i}\Phi_{i}^{p}(\widetilde{e}^{*},-\pi_{0}(t_{\pi}))_{\mathrm{II}}\widetilde{e},
\]
where
\begin{align*}
\Phi_{1}^{p}(\widetilde{e}^{*},-\pi_{0}(t_{\pi}))_{\mathrm{I}}= & [\alpha_{V}^{p-2}(-\pi_{0}(t_{\pi})),\Phi_{1}^{p-1}(\widetilde{e}^{*},-\pi_{0}(t_{\pi}))_{\mathrm{I}}]_{V},\\
\Phi_{i}^{p}(\widetilde{e}^{*},-\pi_{0}(t_{\pi}))_{\mathrm{I}}= & [\alpha_{V}^{p-2}(-\pi_{0}(t_{\pi})),\Phi_{i}^{p-1}(\widetilde{e}^{*},-\pi_{0}(t_{\pi}))_{\mathrm{I}}]_{V}+\widetilde{\mathscr{D}}(\Phi_{i-1}^{p-1}(\widetilde{e}^{*},-\pi_{0}(t_{\pi}))_{\mathrm{I}})\\
 & +[\sum_{i=1}^{p-3}\alpha_{V}^{i}(\widetilde{x}_{0}),\Phi_{i-1}^{p-1}(\widetilde{e}^{*},-\pi_{0}(t_{\pi}))_{\mathrm{I}}]_{V},(2\leq i\leq l-2),\\
\Phi_{p-1}^{p}(\widetilde{e}^{*},-\pi_{0}(t_{\pi}))_{\mathrm{I}}= & \widetilde{\mathscr{D}}(\Phi_{p-2}^{p-1}(\widetilde{e}^{*},-\pi_{0}(t_{\pi}))_{\mathrm{I}})+[\sum_{i=1}^{p-3}\alpha_{V}^{i}(\widetilde{x}_{0}),\Phi_{p-2}^{p-1}(\widetilde{e}^{*},-\pi_{0}(t_{\pi}))_{\mathrm{I}}]_{V},
\end{align*}
\begin{align*}
\Phi_{1}^{p}(\widetilde{e}^{*},-\pi_{0}(t_{\pi}))_{\mathrm{II}}= & B_{V}(\widetilde{\mathscr{D}}(\alpha_{V}^{p-2}(-\pi_{0}(t_{\pi}))),\Phi_{1}^{p-1}(\widetilde{e}^{*},-\pi_{0}(t_{\pi}))_{\mathrm{I}}),\\
\Phi_{i}^{p}(\widetilde{e}^{*},-\pi_{0}(t_{\pi}))_{\mathrm{II}}= & B_{V}(\widetilde{\mathscr{D}}(\alpha_{V}^{p-2}(-\pi_{0}(t_{\pi}))),\Phi_{i}^{p-1}(\widetilde{e}^{*},-\pi_{0}(t_{\pi}))_{\mathrm{I}})\\
 & +B_{V}(\widetilde{\mathscr{D}}(\sum_{i=1}^{p-3}\alpha_{V}^{i}(\widetilde{x}_{0})),\Phi_{i-1}^{p-1}(\widetilde{e}^{*},-\pi_{0}(t_{\pi}))_{\mathrm{I}}),(2\leq i\leq l-2),\\
\Phi_{p-1}^{p}(\widetilde{e}^{*},-\pi_{0}(t_{\pi}))_{\mathrm{II}}= & B_{V}(\widetilde{\mathscr{D}}(\sum_{i=1}^{p-3}\alpha_{V}^{i}(\widetilde{x}_{0})),\Phi_{p-2}^{p-1}(\widetilde{e}^{*},-\pi_{0}(t_{\pi}))_{\mathrm{I}}).
\end{align*}

\end{theorem}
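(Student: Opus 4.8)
The plan is to recognize that the quantities $s_i^{\widetilde{L}}(\widetilde{e}^{*},-\pi_0(t_\pi))$ and $\Phi_i^p(\widetilde{e}^{*},-\pi_0(t_\pi))$ are coefficients of one and the same iterated-bracket polynomial in the scalar $k$, so that the theorem is purely a matter of matching two expansions. Indeed, putting $x:=\widetilde{e}^{*}$, $y:=-\pi_0(t_\pi)$ and $l:=p$ in the definition of $\Phi_i^l$ given just before Lemma 4.7, the defining expression $\mathrm{ad}(\alpha^{p-2}(kx+y))\circ\cdots\circ\mathrm{ad}(kx+y)(x)$ coincides verbatim with the expression defining $s_i^{\widetilde{L}}(\widetilde{e}^{*},-\pi_0(t_\pi))$ in the statement. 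Hence, as polynomials in $k$,
\[
\sum_{i=1}^{p-1}\Phi_i^{p}(x,y)\,k^{i-1}=\sum_{i=1}^{p-1}i\,s_i^{\widetilde{L}}(\widetilde{e}^{*},-\pi_0(t_\pi))\,k^{i-1}.
\]

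First I would compare the coefficient of $k^{i-1}$ on each side. Since $1\leq i\leq p-1$, every such $i$ is invertible in $\mathbb{F}$, so $s_i^{\widetilde{L}}(\widetilde{e}^{*},-\pi_0(t_\pi))=\tfrac{1}{i}\Phi_i^{p}(x,y)$ for each $i$. Summing over $i$ and inserting the decomposition $\Phi_i^{p}(x,y)=\Phi_i^{p}(x,y)_{\mathrm{I}}+\Phi_i^{p}(x,y)_{\mathrm{II}}\widetilde{e}$ furnished by Propositions 4.9, 4.11 and 4.12, then separating the $V$-component from the $\widetilde{e}$-component, gives at once the asserted identity for $p>3$; the explicit recursive formulas for $\Phi_i^{p}(x,y)_{\mathrm{I}}$ and $\Phi_i^{p}(x,y)_{\mathrm{II}}$ are copied from those propositions according to the three index ranges $i=1$ (Proposition 4.9), $2\leq i\leq p-2$ (Proposition 4.11) and $i=p-1$ (Proposition 4.12).

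The only step needing separate treatment is the base case $p=3$, where the iterative formulas of Propositions 4.9, 4.11 and 4.12 do not apply (they require $l\geq4$) and the middle range $2\leq i\leq p-2$ is empty. Here I would instead use Lemma 4.7 directly, which yields $\Phi_1^{3}(x,y)=[\alpha(y),[y,x]]$ and $\Phi_2^{3}(x,y)=[\alpha(x),[y,x]]$; substituting $x=\widetilde{e}^{*}$, $y=-\pi_0(t_\pi)$, using $[\widetilde{e}^{*},v]=\widetilde{\mathscr{D}}(v)$ for $v\in V$ together with Lemma 4.8 (which replaces $\alpha$ by $\alpha_V$ on $V$), and then splitting the resulting brackets of $\widetilde{L}$ into their $V$- and $\widetilde{e}$-parts, reproduces exactly the four expressions $\Phi_1^{3}(x,y)_{\mathrm{I}}$, $\Phi_2^{3}(x,y)_{\mathrm{I}}$, $\Phi_1^{3}(x,y)_{\mathrm{II}}$, $\Phi_2^{3}(x,y)_{\mathrm{II}}$ listed in part (1). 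I do not expect a genuine obstacle: the substantive work is already carried by Propositions 4.9, 4.11 and 4.12, and the present theorem is the bookkeeping step that assembles them through the single relation $s_i=\tfrac{1}{i}\Phi_i^{p}$, the only subtleties being the invertibility of $i$ modulo $p$ and the vacuous middle range when $p=3$.
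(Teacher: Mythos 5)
Your proposal is correct and takes essentially the same route as the paper, which in fact offers no separate proof of this theorem but derives it ``based on the above discussion'' exactly as you do: identifying $\Phi_{i}^{p}(\widetilde{e}^{*},-\pi_{0}(t_{\pi}))=i\,s_{i}^{\widetilde{L}}(\widetilde{e}^{*},-\pi_{0}(t_{\pi}))$ from the two coinciding expansions, inverting $i$ modulo $p$, inserting the $\mathrm{I}$/$\mathrm{II}$ decompositions from the iterative propositions, and treating $p=3$ separately via the base case of Lemma 4.7 together with Lemma 4.8. One bookkeeping slip: the middle-range and last-index formulas are Propositions 4.10 and 4.11, not 4.11 and 4.12 (the latter is the theorem being proved, so citing it would be circular), though your intended references are unambiguous.
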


The following theorem characterizes the equivalence class of $p$-structures
on double extensions.

\begin{theorem} The adapted isomorphism $\pi:L\rightarrow\widetilde{L}$
given in Eq. (4.7) defines a restricted isomorphism if and only if
\[
\pi_{0}(u^{[p]_{V}})=(\pi_{0}(u))^{[p]_{V}}+B_{V}(t_{\pi},u)^{p}\widetilde{u}_{0}
\]
and
\begin{align*}
\mathscr{\widetilde{P}}\circ\pi_{0}= & \gamma\mathscr{P}+B_{V}(t_{\pi},(\cdot)^{[p]_{V}})-B_{V}(t_{\pi},\cdot)^{p}\widetilde{m},\\
\widetilde{\xi}= & \gamma^{p-1}\xi,\\
\widetilde{l}= & \gamma^{p}(B_{V}(t_{\pi},a_{0})+\gamma l-\frac{\xi}{2\gamma}B_{V}(t_{\pi},t_{\pi}))+\mathscr{\widetilde{P}}(\pi_{0}(t_{\pi}))\\
 & +\frac{1}{2^{p}}B_{V}(t_{\pi},t_{\pi})^{p}\widetilde{m}-\sum_{i=1}^{p-1}\frac{1}{i}\Phi_{i}^{p}(\widetilde{e}^{*},-\pi_{0}(t_{\pi}))_{\mathrm{II}},\\
\widetilde{a}_{0}= & \gamma^{p}(\pi_{0}(a_{0})-\gamma^{-1}\xi\pi_{0}(t_{\pi}))+\frac{1}{2^{p}}B_{V}(t_{\pi},t_{\pi})^{p}\widetilde{u}_{0}+\pi_{0}(t_{\pi})^{[p]_{V}}\\
 & -\sum_{i=1}^{p-1}\frac{1}{i}\Phi_{i}^{p}(\widetilde{e}^{*},-\pi_{0}(t_{\pi}))_{\mathrm{I}},\\
\widetilde{m}= & \gamma^{-p}(\gamma m+B_{V}(t_{\pi},u_{0}))\\
\widetilde{u}_{0}= & \gamma^{-p}\pi_{0}(u_{0}).
\end{align*}
where $\Phi_{i}^{p}(\widetilde{e}^{*},-\pi_{0}(t_{\pi}))_{\mathrm{I}}$
and $\Phi_{i}^{p}(\widetilde{e}^{*},-\pi_{0}(t_{\pi}))_{\mathrm{II}}$
are as in Theorem 4.12.

\end{theorem}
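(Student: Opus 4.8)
The plan is to establish both implications by reducing the defining identity of a restricted isomorphism, $\pi(g^{[p]_L})=\pi(g)^{[p]_{\widetilde{L}}}$, to its verification on the three generating types of elements $e$, $e^*$ and $u\in V$, and then to compute each case explicitly. Write $\Delta(g):=\pi(g^{[p]_L})-\pi(g)^{[p]_{\widetilde{L}}}$. Since $\pi$ is an adapted isomorphism it is in particular a homomorphism of Hom-Lie algebras, hence it intertwines the universal Lie polynomials appearing in (R3): $\pi(s_i^L(g,h))=s_i^{\widetilde{L}}(\pi(g),\pi(h))$ for all $g,h$ and all $i$. Combining this with (R2) and (R3) shows that $\Delta$ is additive and $p$-semilinear, i.e. $\Delta(g+h)=\Delta(g)+\Delta(h)$ and $\Delta(kg)=k^p\Delta(g)$. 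Because $L=\mathscr{E}^*\oplus V\oplus\mathscr{E}$, it therefore suffices to prove $\Delta(e)=0$, $\Delta(e^*)=0$ and $\Delta(u)=0$ for every $u\in V$; the listed identities are exactly the component-wise form of these three vanishing conditions, and the converse direction follows by reading the same component-wise equalities backwards and extending by additivity and $p$-semilinearity.

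For $g=e$ I would substitute $e^{[p]_L}=me+u_0$ and $\pi(e)=\gamma\widetilde{e}$ and expand, via (R2), $\pi(e)^{[p]_{\widetilde{L}}}=\gamma^p\widetilde{e}^{[p]_{\widetilde{L}}}=\gamma^p(\widetilde{m}\widetilde{e}+\widetilde{u}_0)$; comparing the $V$- and $\widetilde{e}$-components of $\pi(me+u_0)=m\gamma\widetilde{e}+\pi_0(u_0)+B_V(t_\pi,u_0)\widetilde{e}$ against this gives $\widetilde{u}_0=\gamma^{-p}\pi_0(u_0)$ and $\widetilde{m}=\gamma^{-p}(\gamma m+B_V(t_\pi,u_0))$. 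For $g=u\in V$ I would use $u^{[p]_L}=u^{[p]_V}+\mathscr{P}(u)e$ together with $\pi(u)=\pi_0(u)+B_V(t_\pi,u)\widetilde{e}$; since $\widetilde{e}$ is central, (R3) applied to $\pi(u)$ carries no bracket correction, so $\pi(u)^{[p]_{\widetilde{L}}}=\pi_0(u)^{[p]_V}+\mathscr{\widetilde{P}}(\pi_0(u))\widetilde{e}+B_V(t_\pi,u)^p(\widetilde{m}\widetilde{e}+\widetilde{u}_0)$. Comparing components with $\pi(u^{[p]_L})=\pi_0(u^{[p]_V})+\big(B_V(t_\pi,u^{[p]_V})+\gamma\mathscr{P}(u)\big)\widetilde{e}$ then yields the identity $\pi_0(u^{[p]_V})=\pi_0(u)^{[p]_V}+B_V(t_\pi,u)^p\widetilde{u}_0$ and the formula for $\mathscr{\widetilde{P}}\circ\pi_0$.

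The decisive and most laborious case is $g=e^*$. Here $\pi(e^*)=\gamma^{-1}\big(\widetilde{e}^*-\pi_0(t_\pi)-\frac12 B_V(t_\pi,t_\pi)\widetilde{e}\big)$ by (4.7), so I would compute $\pi(e^*)^{[p]_{\widetilde{L}}}=\gamma^{-p}\big(\widetilde{e}^*-\pi_0(t_\pi)-\frac12 B_V(t_\pi,t_\pi)\widetilde{e}\big)^{[p]_{\widetilde{L}}}$ by peeling off summands with (R3). The central term $-\frac12 B_V(t_\pi,t_\pi)\widetilde{e}$ has vanishing brackets and contributes $-\frac{1}{2^p}B_V(t_\pi,t_\pi)^p(\widetilde{m}\widetilde{e}+\widetilde{u}_0)$, using $(-\frac12)^p=-\frac{1}{2^p}$ as $p$ is odd. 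For the remaining $(\widetilde{e}^*-\pi_0(t_\pi))^{[p]_{\widetilde{L}}}$ I would apply (R3) once more, inserting $(\widetilde{e}^*)^{[p]_{\widetilde{L}}}=\widetilde{a}_0+\widetilde{l}\widetilde{e}+\widetilde{\xi}\widetilde{e}^*$ and $(-\pi_0(t_\pi))^{[p]_{\widetilde{L}}}=-\pi_0(t_\pi)^{[p]_V}-\mathscr{\widetilde{P}}(\pi_0(t_\pi))\widetilde{e}$ (again using $p$ odd), and crucially substituting the explicit value of $\sum_{i=1}^{p-1}s_i^{\widetilde{L}}(\widetilde{e}^*,-\pi_0(t_\pi))$ supplied by Theorem 4.12, namely $\sum_i\frac1i\Phi_i^p(\widetilde{e}^*,-\pi_0(t_\pi))_{\mathrm{I}}+\sum_i\frac1i\Phi_i^p(\widetilde{e}^*,-\pi_0(t_\pi))_{\mathrm{II}}\widetilde{e}$. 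Equating the $\widetilde{e}^*$-, $V$- and $\widetilde{e}$-components of $\pi((e^*)^{[p]_L})=\pi_0(a_0)-\gamma^{-1}\xi\pi_0(t_\pi)+\big(B_V(t_\pi,a_0)+\gamma l-\frac{\xi}{2\gamma}B_V(t_\pi,t_\pi)\big)\widetilde{e}+\gamma^{-1}\xi\widetilde{e}^*$ with the above expansion produces $\widetilde{\xi}=\gamma^{p-1}\xi$ and the stated closed forms for $\widetilde{a}_0$ and $\widetilde{l}$. The main obstacle is precisely this last bookkeeping: correctly importing the $\Phi_i^p$ data from Theorem 4.12 into the $e^*$-computation and keeping track of the signs and $p$-th powers forced by $p$ being odd, after which the converse implication is obtained by reversing the component-wise identities on the generators and extending via the additivity and $p$-semilinearity of $\Delta$.
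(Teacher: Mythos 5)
Your proposal is correct and follows essentially the same route as the paper's proof: evaluate the restrictedness condition on the generators $e$, $e^{*}$ and $u\in V$, expand via (R2)--(R3) (using centrality of $\widetilde{e}$, oddness of $p$, and Theorem 4.12 for $\sum_{i}s_{i}^{\widetilde{L}}(\widetilde{e}^{*},-\pi_{0}(t_{\pi}))$), and compare the $\widetilde{e}^{*}$-, $V$- and $\widetilde{e}$-components. Your observation that $\Delta(g)=\pi(g^{[p]_{L}})-\pi(g)^{[p]_{\widetilde{L}}}$ is additive and $p$-semilinear (because an adapted isomorphism intertwines the polynomials $s_{i}$) is a welcome addition, as it supplies the justification for the converse implication that the paper dismisses with ``Obviously''.
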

\begin{proof}
($\Leftarrow$) Obviously.

($\Rightarrow$) For any $u\in V$, we have

\begin{align*}
\pi(u^{[p]_{L}}) & =\pi(u^{[p]_{V}}+\mathscr{P}(u)e)=\pi(u^{[p]_{V}})+\mathscr{P}(u)\pi(e)\\
 & =\pi_{0}(u^{[p]_{V}})+B_{V}(t_{\pi},u^{[p]_{V}})\widetilde{e}+\mathscr{P}(u)\gamma\widetilde{e}\\
 & =\pi_{0}(u^{[p]_{V}})+(B_{V}(t_{\pi},u^{[p]_{V}})+\mathscr{P}(u)\gamma)\widetilde{e}
\end{align*}
and
\begin{align*}
(\pi(u))^{[p]_{\widetilde{L}}}= & (\pi_{0}(u)+B_{V}(t_{\pi},u)\widetilde{e})^{[p]_{\widetilde{L}}}\\
= & (\pi_{0}(u))^{[p]_{\widetilde{L}}}+B_{V}(t_{\pi},u)^{p}\widetilde{e}^{[p]_{\widetilde{L}}}\\
= & (\pi_{0}(u))^{[p]_{V}}+\mathscr{\widetilde{P}}(\pi_{0}(u))\widetilde{e}+B_{V}(t_{\pi},u)^{p}(\widetilde{m}\widetilde{e}+\widetilde{u}_{0})\\
= & ((\pi_{0}(u))^{[p]_{V}}+B_{V}(t_{\pi},u)^{p}\widetilde{u}_{0})+(\mathscr{\widetilde{P}}(\pi_{0}(u))+B_{V}(t_{\pi},u)^{p}\widetilde{m})\widetilde{e}.
\end{align*}

It follows from $\pi(u^{[p]_{L}})=(\pi(u))^{[p]_{\widetilde{L}}}$
that
\[
\pi_{0}(u^{[p]_{V}})=(\pi_{0}(u))^{[p]_{V}}+B_{V}(t_{\pi},u)^{p}\widetilde{u}_{0}
\]
and
\[
\mathscr{\widetilde{P}}\circ\pi_{0}=\gamma\mathscr{P}+B_{V}(t_{\pi},(\cdot)^{[p]_{V}})-B_{V}(t_{\pi},\cdot)^{p}\widetilde{m}.
\]

Moreover, we have
\begin{align*}
\pi((e^{*})^{[p]_{L}}) & =\pi(a_{0}+le+\xi e^{*})=\pi(a_{0})+l\pi(e)+\xi\pi(e^{*})\\
 & =\pi_{0}(a_{0})+B_{V}(t_{\pi},a_{0})\widetilde{e}+l\gamma\widetilde{e}+\xi\gamma^{-1}(\widetilde{e}^{*}-\pi_{0}(t_{\pi})-\frac{1}{2}B_{V}(t_{\pi},t_{\pi})\widetilde{e})\\
 & =(\pi_{0}(a_{0})-\xi\gamma^{-1}\pi_{0}(t_{\pi}))+\xi\gamma^{-1}\widetilde{e}^{*}+(l\gamma+B_{V}(t_{\pi},a_{0})-\frac{1}{2}\xi\gamma^{-1}B_{V}(t_{\pi},t_{\pi}))\widetilde{e}
\end{align*}
and

\begin{align*}
 & (\pi(e^{*}))^{[p]_{\widetilde{L}}}\\
= & (\gamma^{-1}(\widetilde{e}^{*}-\pi_{0}(t_{\pi})-\frac{1}{2}B_{V}(t_{\pi},t_{\pi})\widetilde{e}))^{[p]_{\widetilde{L}}}\\
= & \gamma^{-p}(\widetilde{e}^{*}-\pi_{0}(t_{\pi}))^{[p]_{\widetilde{L}}}-\frac{1}{2^{p}}\gamma^{-p}B_{V}(t_{\pi},t_{\pi})^{p}\widetilde{e}^{[p]_{\widetilde{L}}}+\sum_{i=1}^{p-1}\gamma^{-p}s_{i}^{\widetilde{L}}(\widetilde{e}^{*}-\pi_{0}(t_{\pi}),-\frac{1}{2}B_{V}(t_{\pi},t_{\pi})\widetilde{e})\\
= & \gamma^{-p}(\widetilde{e}^{*})^{[p]_{\widetilde{L}}}-\gamma^{-p}\pi_{0}(t_{\pi})^{[p]_{\widetilde{L}}}+\gamma^{-p}\sum_{i=1}^{p-1}s_{i}^{\widetilde{L}}(\widetilde{e}^{*},-\pi_{0}(t_{\pi}))-\frac{1}{2^{p}}\gamma^{-p}B_{V}(t_{\pi},t_{\pi})^{p}\widetilde{e}^{[p]_{\widetilde{L}}}\\
= & \gamma^{-p}(\widetilde{a}_{0}+\widetilde{l}\widetilde{e}+\widetilde{\xi}\widetilde{e}^{*})-\gamma^{-p}\pi_{0}(t_{\pi})^{[p]_{V}}-\gamma^{-p}\widetilde{\mathscr{P}}(\pi_{0}(t_{\pi}))\widetilde{e}-\frac{1}{2^{p}}\gamma^{-p}B_{V}(t_{\pi},t_{\pi})^{p}(\widetilde{m}\widetilde{e}+\widetilde{u}_{0})\\
 & +\gamma^{-p}(\sum_{i=1}^{p-1}\frac{1}{i}\Phi_{i}^{p}(\widetilde{e}^{*},-\pi_{0}(t_{\pi}))_{\mathrm{I}}+\sum_{i=1}^{p-1}\frac{1}{i}\Phi_{i}^{p}(\widetilde{e}^{*},-\pi_{0}(t_{\pi}))_{\mathrm{II}}\widetilde{e})\\
= & (\gamma^{-p}\widetilde{a}_{0}-\gamma^{-p}\pi_{0}(t_{\pi})^{[p]_{V}}+\gamma^{-p}\sum_{i=1}^{p-1}\frac{1}{i}\Phi_{i}^{p}(\widetilde{e}^{*},-\pi_{0}(t_{\pi}))_{\mathrm{I}}-\frac{1}{2^{p}}\gamma^{-p}B_{V}(t_{\pi},t_{\pi})^{p}\widetilde{u}_{0})+\gamma^{-p}\widetilde{\xi}\widetilde{e}^{*}\\
 & +(\gamma^{-p}\widetilde{l}-\gamma^{-p}\widetilde{\mathscr{P}}(\pi_{0}(t_{\pi}))+\gamma^{-p}\sum_{i=1}^{p-1}\frac{1}{i}\Phi_{i}^{p}(\widetilde{e}^{*},-\pi_{0}(t_{\pi}))_{\mathrm{II}}-\frac{1}{2^{p}}\gamma^{-p}B_{V}(t_{\pi},t_{\pi})^{p}\widetilde{m})\widetilde{e}.
\end{align*}

It follows from $\pi((e^{*})^{[p]_{L}})=(\pi(e^{*}))^{[p]_{\widetilde{L}}}$
that

\begin{align*}
\widetilde{\xi}= & \gamma^{p-1}\xi,\\
\widetilde{l}= & \gamma^{p}(B_{V}(t_{\pi},a_{0})+\gamma l-\frac{\xi}{2\gamma}B_{V}(t_{\pi},t_{\pi}))+\mathscr{\widetilde{P}}(\pi_{0}(t_{\pi}))\\
 & +\frac{1}{2^{p}}B_{V}(t_{\pi},t_{\pi})^{p}\widetilde{m}-\sum_{i=1}^{p-1}\frac{1}{i}\Phi_{i}^{p}(\widetilde{e}^{*},-\pi_{0}(t_{\pi}))_{\mathrm{II}},
\end{align*}
and
\begin{align*}
\widetilde{a}_{0}= & \gamma^{p}(\pi_{0}(a_{0})-\gamma^{-1}\xi\pi_{0}(t_{\pi}))+\frac{1}{2^{p}}B_{V}(t_{\pi},t_{\pi})^{p}\widetilde{u}_{0}+\pi_{0}(t_{\pi})^{[p]_{V}}\\
 & -\sum_{i=1}^{p-1}\frac{1}{i}\Phi_{i}^{p}(\widetilde{e}^{*},-\pi_{0}(t_{\pi}))_{\mathrm{I}}.
\end{align*}

Finally, we have
\begin{align*}
0 & =\pi(e^{[p]_{L}})-\pi(e)^{[p]_{\widetilde{L}}}=\pi(me+u_{0})-(\gamma\widetilde{e})^{[p]_{\widetilde{L}}}\\
 & =m\gamma\widetilde{e}+\pi_{0}(u_{0})+B_{V}(t_{\pi},u_{0})\widetilde{e}-\gamma^{p}(\widetilde{m}\widetilde{e}+\widetilde{u}_{0})\\
 & =(\pi_{0}(u_{0})-\gamma^{p}\widetilde{u}_{0})+(m\gamma+B_{V}(t_{\pi},u_{0})-\gamma^{p}\widetilde{m})\widetilde{e}.
\end{align*}

It follows that $\widetilde{u}_{0}=\gamma^{-p}\pi_{0}(u_{0})$ and
$\widetilde{m}=\gamma^{-p}(m\gamma+B_{V}(t_{\pi},u_{0}))$.
\end{proof}

\section{Examples of Double Extensions}

In Sec. 2, it has been pointed out that the cohomology structures
defined in \citep{MS} and \citep{AEM} can not be used in the study
of the double extensions of Hom-Lie algebras. Therefore, we can not
give the examples of the double extensions of Hom-Lie algebras with
the corresponding cohomology theory. We will give several examples
of double extensions with other methods.

\begin{example} Suppose that $\mathrm{char}(\mathbb{F})=2$. Consider
the Heisenberg Lie algebra $\mathcal{H}$ which is spanned by $x,y,z$.
Let $V=\mathcal{H\oplus\mathcal{H}^{*}}$ with the bracket $[\cdot,\cdot]_{V}:V\times V\rightarrow V$
defined by
\[
[x,y]_{V}=z,\:[x,z^{*}]_{V}=y^{*},\:[y,z^{*}]_{V}=x^{*}
\]
and a bilinear form $B_{V}:V\times V\rightarrow\mathbb{F}$ defined
by
\[
B_{V}(x,x^{*})=1,\:B_{V}(y,y^{*})=1,\:B_{V}(z,z^{*})=1.
\]
Then the triple $(V,[\cdot,\cdot]_{V},B_{V})$ is a quadratic Lie
algebra (i.e. a Lie algebra with a nondegenerate invariant symmetric
bilinear form). A $2$-structure $[2]_{V}$ on $V$ is defined by
(for any $l_{1},l_{2},l_{3},m_{1},m_{2},m_{3}\in\mathbb{F}$)
\[
(l_{1}x+l_{2}y+l_{3}z+m_{1}x^{*}+m_{2}y^{*}+m_{3}z^{*})^{[2]_{V}}=((l_{3})^{2}+l_{1}l_{2})z+l_{1}m_{3}y^{*}+l_{2}m_{3}x^{*},
\]
which implies that
\[
x^{[2]_{V}}=y^{[2]_{V}}=0,\:z^{[2]_{V}}=z,\:(x^{*})^{[2]_{V}}=(y^{*})^{[2]_{V}}=(z^{*})^{[2]_{V}}=0.
\]

Furthermore, we define a derivation $\mathscr{D}:V\rightarrow V$
as follows:
\[
\mathscr{D}(x)=x,\:\mathscr{D}(y)=y,\:\mathscr{D}(z)=0,\:\mathscr{D}(x^{*})=x^{*},\:\mathscr{D}(y^{*})=y^{*},\:\mathscr{D}(z^{*})=0.
\]
It's easy to prove that $\mathscr{D}$ is a restricted derivation
with respect to $[2]_{V}$ on $V$ such that $B_{V}$ is $\mathscr{D}$-invariant.
If $\xi=1$ and $a_{0}=z$, then $\mathscr{D}$ satisfies Eq. (2.2),
that is, $\mathscr{D}$ has $2$-property.

Let $L=\mathscr{E}^{*}\oplus V\oplus\mathscr{E}$. According to Theorem
3.1, the bracket $[\cdot,\cdot]:L\times L\rightarrow L$ is defined
by
\[
[x,y]=z,\:[x,x^{*}]=e,\:[x,z^{*}]=y^{*},\:[y,y^{*}]=e,\:[y,z^{*}]=x^{*},
\]
\[
[e^{*},x]=x,\:[e^{*},y]=y,\:[e^{*},x^{*}]=x^{*},\:[e^{*},y^{*}]=y^{*},
\]
the linear map $\alpha:L\rightarrow L$ is defined by
\[
\alpha(v)=v,\:\forall v\in V,\:\alpha(e^{*})=e^{*}+\lambda_{0}e,\:\alpha(e)=e
\]
and the bilinear form $B:L\times L\rightarrow\mathbb{F}$ is defined
by
\[
B(u,v)=B_{V}(u,v),\:B(v,e^{*})=B(v,e)=0,\:\forall u,v\in V,
\]
\[
B(e^{*},e)=1,\:B(e,e)=0.
\]

Then the quadruple $(L,[\cdot,\cdot],\alpha,B)$ is a quadratic Hom-Lie
algebra, which is the double extension of $V$.

In addition, we define the map $\mathscr{P}:V\rightarrow\mathbb{F}$
as follows (for any $l_{1},l_{2},l_{3},m_{1},m_{2},m_{3}\in\mathbb{F}$):
\[
\mathscr{P}(l_{1}x+l_{2}y+l_{3}z+m_{1}x^{*}+m_{2}y^{*}+m_{3}z^{*})=l_{1}m_{1}+l_{2}m_{2}.
\]
Then $\mathscr{P}$ satisfies Eqs. (3.5) and (3.6). Therefore, we
obtain a $2$-structure $[2]_{L}:L\rightarrow L$ defined by

\[
v^{[2]_{L}}=v^{[2]_{V}}+\mathscr{P}(v)e,\:\forall v\in V,\:(e^{*})^{[2]_{L}}=z+le+e^{*},\:e^{[2]_{L}}=me+z.
\]

\end{example}

To provide more examples, we first introduce the following results
as preparation.

\begin{lemma} (see \citep{BM,BM2}) Let $(\mathfrak{g},[\cdot,\cdot],B)$
be a quadratic Lie algebra and $\alpha\in\mathrm{End}_{s}(\mathfrak{g},B)$.

(1) The quadruple $\mathfrak{g}_{\alpha}=(\mathfrak{g},[\cdot,\cdot]_{\alpha},\alpha,B_{\alpha})$
be a quadratic Hom-Lie algebra, where $[x,y]_{\alpha}=\alpha\circ[x,y]$,
$B_{\alpha}(x,y)=B(\alpha(x),y)$, for any $x,y\in\mathfrak{g}$.

(2) Let $[p]:\mathfrak{g}\rightarrow\mathfrak{g}$ be a $p$-mapping
on Lie algebra $(\mathfrak{g},[\cdot,\cdot],B)$, then $[p]_{\alpha}:\mathfrak{g}\rightarrow\mathfrak{g}$
defined by $x^{[p]_{\alpha}}=\alpha^{p-1}(x^{[p]})$, $\forall x\in\mathfrak{g}$
is a $p$-structure on $\mathfrak{g}_{\alpha}$.

\end{lemma}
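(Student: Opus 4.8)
The statement is a restricted version of the Yau-type twist: $\mathfrak{g}_{\alpha}$ is obtained from the quadratic Lie algebra $(\mathfrak{g},[\cdot,\cdot],B)$ by twisting the bracket through $\alpha$, and the point is that a symmetric $\alpha$ (which, as in the twisting principle of \citep{BM}, also respects the bracket) transports the quadratic and restricted structure. The plan is to verify the defining axioms directly, the main computational device being the two elementary identities $\mathrm{ad}_{\alpha}(x)=\alpha\circ\mathrm{ad}(x)$ and $[\alpha^{k}(a),\alpha^{k}(b)]=\alpha^{k}([a,b])$ for $k\geq 0$, where $\mathrm{ad}$ denotes the adjoint in $\mathfrak{g}$ and $\mathrm{ad}_{\alpha}$ the adjoint for $[\cdot,\cdot]_{\alpha}=\alpha\circ[\cdot,\cdot]$.

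For part (1) I would first dispose of the Hom-Lie axioms. Skew-symmetry is immediate from $[x,y]_{\alpha}=\alpha[x,y]$. Multiplicativity and the Hom-Jacobi identity both collapse onto the ordinary identities of $\mathfrak{g}$: indeed $\alpha([x,y]_{\alpha})=\alpha^{2}[x,y]=[\alpha x,\alpha y]_{\alpha}$, while $\sum_{\circlearrowleft}[\alpha x,[y,z]_{\alpha}]_{\alpha}=\alpha^{2}\sum_{\circlearrowleft}[x,[y,z]]=0$ by the Jacobi identity of $\mathfrak{g}$, the middle step using the multiplicativity identity $[\alpha a,\alpha b]=\alpha[a,b]$. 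For the quadratic part, $B_{\alpha}(x,y)=B(\alpha x,y)$ is symmetric because both $B$ and $\alpha$ are symmetric; it satisfies Eq. (2.1) since $B_{\alpha}(\alpha x,y)=B(\alpha^{2}x,y)=B(\alpha x,\alpha y)=B_{\alpha}(x,\alpha y)$; and it is invariant for $[\cdot,\cdot]_{\alpha}$ because $B_{\alpha}([x,y]_{\alpha},z)=B(\alpha^{2}[x,y],z)$ agrees with $B_{\alpha}(x,[y,z]_{\alpha})=B(\alpha x,\alpha[y,z])=B(\alpha^{2}[x,y],z)$ after one use of the invariance of $B$ and the symmetry of $\alpha$. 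Nondegeneracy of $B_{\alpha}$ follows from that of $B$ once $\alpha$ is invertible, as it is in the applications (for instance when $\alpha$ is an involution).

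Part (2) is the heart of the matter. Condition (R2) is trivial: $(kx)^{[p]_{\alpha}}=\alpha^{p-1}((kx)^{[p]})=k^{p}\alpha^{p-1}(x^{[p]})=k^{p}x^{[p]_{\alpha}}$. The engine for (R1) and (R3) is the telescoping identity
\[
\mathrm{ad}_{\alpha}(\alpha^{m-1}z)\circ\cdots\circ\mathrm{ad}_{\alpha}(z)(w)=\alpha^{m}(\mathrm{ad}\,z)^{m}(w),
\]
which I would prove by induction on $m$, the inductive step being $\mathrm{ad}_{\alpha}(\alpha^{k}z)\big(\alpha^{k}(\mathrm{ad}\,z)^{k}w\big)=\alpha[\alpha^{k}z,\alpha^{k}(\mathrm{ad}\,z)^{k}w]=\alpha^{k+1}(\mathrm{ad}\,z)^{k+1}w$, again via the multiplicativity identity. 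Granting this, (R1) follows because its right-hand side is exactly the $m=p$ instance $\alpha^{p}(\mathrm{ad}\,x)^{p}$, whereas its left-hand side $\mathrm{ad}_{\alpha}(\alpha^{p-1}x^{[p]})\circ\alpha^{p-1}$ sends $w\mapsto\alpha[\alpha^{p-1}x^{[p]},\alpha^{p-1}w]=\alpha^{p}[x^{[p]},w]=\alpha^{p}(\mathrm{ad}\,x)^{p}w$, using $\mathrm{ad}(x^{[p]})=(\mathrm{ad}\,x)^{p}$ in the restricted Lie algebra $\mathfrak{g}$.

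For (R3) I would apply the telescoping identity with $z=kx+y$, $m=p-1$ and input $x$, so that the expression defining $s_{i}^{\mathfrak{g}_{\alpha}}$ equals $\alpha^{p-1}(\mathrm{ad}(kx+y))^{p-1}(x)=\alpha^{p-1}\sum_{i}i\,s_{i}(x,y)k^{i-1}$; comparing coefficients gives $s_{i}^{\mathfrak{g}_{\alpha}}(x,y)=\alpha^{p-1}(s_{i}(x,y))$. Hence $(x+y)^{[p]_{\alpha}}=\alpha^{p-1}\big(x^{[p]}+y^{[p]}+\sum_{i}s_{i}(x,y)\big)=x^{[p]_{\alpha}}+y^{[p]_{\alpha}}+\sum_{i}s_{i}^{\mathfrak{g}_{\alpha}}(x,y)$, which is (R3); alternatively one may invoke Lemma 2.10 to reduce (R1) to a basis. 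The main obstacle I anticipate is bookkeeping rather than conceptual: keeping the powers of $\alpha$ aligned in the telescoping identity and in the $s_{i}$-comparison, and making sure that the multiplicativity of $\alpha$ — not merely its symmetry — is what drives the collapse, since symmetry alone secures the bilinear-form statements but neither multiplicativity nor (R1).
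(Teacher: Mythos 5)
The paper offers no proof of this lemma: it is quoted from \citep{BM,BM2}, so your argument can only be checked against the statement itself and those sources. Your route --- direct verification of the axioms, with the telescoping identity $\mathrm{ad}_{\alpha}(\alpha^{m-1}z)\circ\cdots\circ\mathrm{ad}_{\alpha}(z)=\alpha^{m}(\mathrm{ad}\,z)^{m}$ powering (R1) and (R3) --- is the standard one, and part (2) is carried out correctly: the identification $s_{i}^{\mathfrak{g}_{\alpha}}(x,y)=\alpha^{p-1}(s_{i}(x,y))$ and the reduction of (R1) to $\mathrm{ad}(x^{[p]})=(\mathrm{ad}\,x)^{p}$ are exactly what is needed. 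You were also right, and it matters, to read multiplicativity of $\alpha$ into the hypotheses: with only $\alpha\in\mathrm{End}_{s}(\mathfrak{g},B)$ the lemma is false. For instance, take $\mathfrak{g}=\mathfrak{sl}(2)\oplus\mathfrak{sl}(2)$ (characteristic $p>2$) with the orthogonal sum of the trace forms, and let $\alpha$ swap $e_{1}\leftrightarrow e_{2}$ and $f_{1}\leftrightarrow f_{2}$ while fixing $h_{1},h_{2}$: this $\alpha$ is symmetric and invertible, yet the cyclic Hom-Jacobi sum on $(h_{1},h_{2},e_{1})$ reduces to the single term $[\alpha(h_{2}),[e_{1},h_{1}]_{\alpha}]_{\alpha}=\alpha([h_{2},-2e_{2}])=-4e_{1}\neq0$. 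So the hypothesis of \citep{BM} --- $\alpha$ a symmetric \emph{automorphism} --- is not cosmetic, and assuming it, as you do, is the only way to make the statement true; the same goes for invertibility, which you correctly note is forced by nondegeneracy of $B_{\alpha}$.

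The one genuine flaw is the invariance step in part (1), and it contradicts your closing claim that ``symmetry alone secures the bilinear-form statements.'' Symmetry alone does give the symmetry of $B_{\alpha}$ and Eq. (2.1), but not invariance with respect to $[\cdot,\cdot]_{\alpha}$. Your stated justification (one use of the invariance of $B$ plus the symmetry of $\alpha$) produces
\[
B(\alpha x,\alpha[y,z])=B(\alpha^{2}x,[y,z])=B([\alpha^{2}x,y],z),
\]
and $[\alpha^{2}x,y]$ is not $\alpha^{2}[x,y]$ in the lemma's generality: it is if $\alpha^{2}=\mathrm{id}$ (as in Proposition 5.3), but the lemma does not assume that, and even multiplicativity only gives $\alpha^{2}[x,y]=[\alpha^{2}x,\alpha^{2}y]$. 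The correct chain uses multiplicativity twice:
\[
B(\alpha x,\alpha[y,z])=B(\alpha x,[\alpha y,\alpha z])=B([\alpha x,\alpha y],\alpha z)=B(\alpha[x,y],\alpha z)=B(\alpha^{2}[x,y],z).
\]
This is a local repair rather than a change of strategy, but as written the step is unjustified, and the claim about what symmetry alone buys should be withdrawn: the morphism property of $\alpha$ is needed both for the Hom-Lie axioms \emph{and} for the invariance of $B_{\alpha}$.
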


\begin{proposition} Suppose that $\mathrm{char}(\mathbb{F})>2$.
Let $(\mathfrak{g},[\cdot,\cdot],B)$ be a quadratic Lie algebra,
$[p]:\mathfrak{g}\rightarrow\mathfrak{g}$ be a $p$-mapping and $\alpha\in\mathrm{End}_{s}(\mathfrak{g},B)$
with $\alpha^{2}=\mathrm{id}$. Let $\mathfrak{g}_{\alpha}=(\mathfrak{g},[\cdot,\cdot]_{\alpha},\alpha,B_{\alpha})$
be a quadratic Hom-Lie algebra and $[p]_{\alpha}:\mathfrak{g}\rightarrow\mathfrak{g}$
a $p$-structure on $\mathfrak{g}_{\alpha}$ as defined in Lemma 5.2.
If $\mathscr{D}\in\mathrm{Der}(\mathfrak{g})$ is restricted with
respect to $[p]$ such that $\alpha\circ\mathscr{D}=\mathscr{D}\circ\alpha$
and $\mathscr{D}$ has $p$-property, then $\mathscr{D}_{\alpha}:=\alpha\circ\mathscr{D}\in\mathrm{Der}_{\alpha}(\mathfrak{g}_{\alpha})$
is restricted with respect to $[p]_{\alpha}$ and $\mathscr{D}_{\alpha}$
also has $p$-property.

\end{proposition}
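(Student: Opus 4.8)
The plan is to verify the three assertions in turn, exploiting two simplifications that hold because $p$ is odd and $\alpha^{2}=\mathrm{id}$. First, $\alpha^{p-1}=\mathrm{id}$, so the $p$-structure on $\mathfrak{g}_{\alpha}$ collapses to $x^{[p]_{\alpha}}=\alpha^{p-1}(x^{[p]})=x^{[p]}$; and since $\alpha$ and $\mathscr{D}$ commute, $\mathscr{D}_{\alpha}=\alpha\circ\mathscr{D}=\mathscr{D}\circ\alpha$, whence $\mathscr{D}_{\alpha}^{\,k}=\alpha^{k}\mathscr{D}^{k}$ for every $k$. Second, I will use throughout that $\alpha$ is a (symmetric, involutive) automorphism of $(\mathfrak{g},[\cdot,\cdot])$ (this is exactly what makes $\mathfrak{g}_{\alpha}$ multiplicative in Lemma 5.2), so that the twisted adjoint satisfies $\mathrm{ad}_{\alpha}(z)=\alpha\circ\mathrm{ad}(z)$ and $\mathrm{ad}(\alpha^{j}z)=\alpha^{j}\circ\mathrm{ad}(z)\circ\alpha^{-j}$, the latter by iterating $\mathrm{ad}(\alpha z)=\alpha\circ\mathrm{ad}(z)\circ\alpha^{-1}$.

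For $\mathscr{D}_{\alpha}\in\mathrm{Der}_{\alpha}(\mathfrak{g}_{\alpha})$ I first check $\mathscr{D}_{\alpha}\circ\alpha=\alpha\circ\mathscr{D}_{\alpha}$, which is immediate from $\alpha^{2}=\mathrm{id}$ (both sides equal $\mathscr{D}$). For the twisted Leibniz rule, the left-hand side $\mathscr{D}_{\alpha}([x,y]_{\alpha})=\alpha\mathscr{D}(\alpha[x,y])=\mathscr{D}([x,y])$ expands by the ordinary derivation property to $[\mathscr{D}x,y]+[x,\mathscr{D}y]$; the right-hand side $[\mathscr{D}_{\alpha}x,\alpha y]_{\alpha}+[\alpha x,\mathscr{D}_{\alpha}y]_{\alpha}=\alpha[\alpha\mathscr{D}x,\alpha y]+\alpha[\alpha x,\alpha\mathscr{D}y]$ reduces to the same expression after applying the automorphism identity $\alpha[\alpha a,\alpha b]=[a,b]$. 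This step is routine.

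The crux is the restrictedness condition of Definition 2.16 for $\mathscr{D}_{\alpha}$ on $\mathfrak{g}_{\alpha}$, namely
\[
\mathscr{D}_{\alpha}\big(x^{[p]_{\alpha}}\big)=\mathrm{ad}_{\alpha}(\alpha^{p-1}x)\circ\mathrm{ad}_{\alpha}(\alpha^{p-2}x)\circ\cdots\circ\mathrm{ad}_{\alpha}(\alpha x)(\mathscr{D}_{\alpha}x).
\]
I would rewrite each factor as $\mathrm{ad}_{\alpha}(\alpha^{j}x)=\alpha\,\mathrm{ad}(\alpha^{j}x)=\alpha^{\,j+1}\,\mathrm{ad}(x)\,\alpha^{-j}$ and then observe that the interior powers telescope: between the factor for $j$ and the factor for $j-1$ one meets $\alpha^{-j}\alpha^{\,j}=\mathrm{id}$, so the entire composite collapses to $\alpha^{p}(\mathrm{ad}\,x)^{p-1}\alpha^{-1}=\alpha\,(\mathrm{ad}\,x)^{p-1}\,\alpha$ (using $\alpha^{p}=\alpha^{-1}=\alpha$). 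Applying this to $\mathscr{D}_{\alpha}x=\alpha\mathscr{D}x$ and using $\alpha^{2}=\mathrm{id}$ yields $\alpha(\mathrm{ad}\,x)^{p-1}(\mathscr{D}x)$. Meanwhile the left-hand side equals $\alpha\mathscr{D}(x^{[p]})$, which by the hypothesis that $\mathscr{D}$ is restricted on the Lie algebra, $\mathscr{D}(x^{[p]})=(\mathrm{ad}\,x)^{p-1}(\mathscr{D}x)$, is exactly $\alpha(\mathrm{ad}\,x)^{p-1}(\mathscr{D}x)$. The two sides agree, so $\mathscr{D}_{\alpha}\in\mathrm{Der}^{p}(\mathfrak{g}_{\alpha})$. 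Carefully bookkeeping the $\alpha$-exponents in this telescoping is the main technical obstacle; everything else is formal.

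Finally, for the $p$-property I compute $\mathscr{D}_{\alpha}^{\,p}=\alpha^{p}\mathscr{D}^{p}=\alpha\,\mathscr{D}^{p}$ and substitute the $p$-property of $\mathscr{D}$, $\mathscr{D}^{p}=\xi\mathscr{D}+\mathrm{ad}(a_{0})$ with $\mathscr{D}(a_{0})=0$, to obtain $\mathscr{D}_{\alpha}^{\,p}=\xi\,\alpha\mathscr{D}+\alpha\,\mathrm{ad}(a_{0})=\xi\,\mathscr{D}_{\alpha}+\mathrm{ad}_{\alpha}(a_{0})$, the last equality because $\mathrm{ad}_{\alpha}(a_{0})=\alpha\,\mathrm{ad}(a_{0})$. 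Since $\alpha^{p-1}=\mathrm{id}$, this is precisely $\mathscr{D}_{\alpha}^{\,p}=\xi\,\mathscr{D}_{\alpha}\circ\alpha^{p-1}+\mathrm{ad}_{\alpha}(a_{0})\circ\alpha^{p-1}$, and $\mathscr{D}_{\alpha}(a_{0})=\alpha\mathscr{D}(a_{0})=0$. Hence $\mathscr{D}_{\alpha}$ has the $p$-property with the same scalar $\xi$ and the same element $a_{0}$, which completes the argument.
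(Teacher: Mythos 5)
Your proposal is correct and follows essentially the same route as the paper: direct verification of the twisted Leibniz rule, the restrictedness condition, and the $p$-property, using $\alpha^{2}=\mathrm{id}$, the commutation $\alpha\circ\mathscr{D}=\mathscr{D}\circ\alpha$, and the fact that $\alpha$ is a Lie algebra morphism. The only difference is cosmetic: you collapse the composition of twisted adjoints by the operator identity $\mathrm{ad}_{\alpha}(\alpha^{j}x)=\alpha^{j+1}\,\mathrm{ad}(x)\,\alpha^{-j}$ and telescoping, whereas the paper unwinds the same identity as nested twisted brackets starting from $\alpha^{p}\circ(\mathrm{ad}\,x)^{p-1}(\mathscr{D}(x))$.
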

\begin{proof}
Clearly, $\mathscr{D}_{\alpha}\circ\alpha=\alpha\circ\mathscr{D}_{\alpha}.$
For any $x,y\in\mathfrak{g}_{\alpha}$, we have
\begin{align*}
\mathscr{D}_{\alpha}([x,y]_{\alpha}) & =\alpha\circ\mathscr{D}(\alpha([x,y]))=\alpha^{2}\circ\mathscr{D}([x,y])\\
 & =[\mathscr{D}(x),y]+[x,\mathscr{D}(y)]\\
 & =[\alpha^{2}\circ\mathscr{D}(x),\alpha^{2}(y)]+[\alpha^{2}(x),\alpha^{2}\circ\mathscr{D}(y)]\\
 & =\alpha([\alpha\circ\mathscr{D}(x),\alpha(y)])+\alpha([\alpha(x),\alpha\circ\mathscr{D}(y)])\\
 & =[\mathscr{D}_{\alpha}(x),\alpha(y)]_{\alpha}+[\alpha(x),\mathscr{D}_{\alpha}(y)]_{\alpha}.
\end{align*}

Therefore, $\mathscr{D}_{\alpha}:=\alpha\circ\mathscr{D}\in\mathrm{Der}_{\alpha}(\mathfrak{g}_{\alpha})$.

In addition, it follows from $\mathscr{D}(x^{[p]})=(\mathrm{ad}x)^{p-1}(\mathscr{D}(x))$,
$\forall x\in\mathfrak{g}$ that

\begin{align*}
\mathscr{D}_{\alpha}(x^{[p]_{\alpha}}) & =\alpha\circ\mathscr{D}(\alpha^{p-1}(x^{[p]}))=\alpha^{p}\circ\mathscr{D}(x^{[p]})\\
 & =\alpha^{p}\circ(\mathrm{ad}x)^{p-1}(\mathscr{D}(x))=\alpha^{p}\circ[x,[x,\cdot\cdot\cdot,[x,\mathscr{D}(x)]]]\\
 & =[\alpha^{p-1}(x),\alpha^{p-1}([x,\cdot\cdot\cdot,[x,\mathscr{D}(x)]])]_{\alpha}\\
 & =[\alpha^{p-1}(x),[\alpha^{p-2}(x),\cdot\cdot\cdot,[\alpha(x),\alpha\circ\mathscr{D}(x)]_{\alpha}]_{\alpha}]_{\alpha}\\
 & =[\alpha^{p-1}(x),[\alpha^{p-2}(x),\cdot\cdot\cdot,[\alpha(x),\mathscr{D}_{\alpha}(x)]_{\alpha}]_{\alpha}]_{\alpha}.
\end{align*}
It implies that $\mathscr{D}_{\alpha}$ is a restricted derivation
with respect to $[p]_{\alpha}$.

Since $\mathscr{D}$ has $p$-property, there exists $\xi\in\mathbb{F}$
and $a_{0}\in\mathfrak{g}$ such that $\mathscr{D}^{p}=\xi\mathscr{D}+\mathrm{ad}a_{0}$
and $\mathscr{D}(a_{0})=0$. Therefore,
\begin{align*}
\mathscr{D}_{\alpha}^{p} & =(\alpha\circ\mathscr{D})^{p}=\alpha^{p}\circ\mathscr{D}^{p}=\alpha^{p-1}\circ\alpha\circ\mathscr{D}^{p}=\alpha(\xi\mathscr{D}+\mathrm{ad}a_{0})\\
 & =\xi\alpha\circ\mathscr{D}+\alpha\circ[a_{0},\cdot]=\xi\mathscr{D}_{\alpha}+[a_{0},\cdot]_{\alpha}=\xi\mathscr{D}_{\alpha}\circ\alpha^{p-1}+[a_{0},\alpha^{p-1}(\cdot)]_{\alpha}
\end{align*}
and $\mathscr{D}_{\alpha}(a_{0})=\alpha\circ\mathscr{D}(a_{0})=0$,
which implies that $\mathscr{D}_{\alpha}$ has $p$-property.

The proof is complete.
\end{proof}
\begin{example} Suppose that $\mathrm{char}(\mathbb{F})=3$. Consider
an involutive quadratic Hom-Lie algebra $\mathfrak{psl}(3)_{\alpha}$,
which is obtained by twisting Lie algebra $\mathfrak{psl}(3)$ \citep{BBH,BKLS}.
Details are as follows:

For 3-dimensional vector space $V$, let $\mathfrak{sl}(3):=\{A\in\mathrm{\mathfrak{gl}}(V)|\mathrm{tr}(A)=0\}$
denote the special linear Lie algebra and $\mathfrak{s}$ a Lie algebra
of scalar matrices. Then
\[
\mathfrak{psl}(3)=\mathfrak{sl}(3)/\mathfrak{s}
\]
is a Lie algebra whose elements are called projective transformations.

Denote by $x_{i}$ the positive elements of the Chevalley basis (see
\citep{CCLL}) and $y_{i}$ the corresponding negative ones. Then
$\{x_{1},x_{2},x_{3}=[x_{1},x_{2}],y_{1},y_{2},y_{3}=[y_{1},y_{2}]\}$
is the set of generators of Lie algebra $\mathfrak{psl}(3)$. In the
ordered basis $e_{1}=[x_{1},y_{1}],e_{2}=x_{1},e_{3}=x_{2},e_{4}=x_{3},e_{5}=y_{1},e_{6}=y_{2},e_{7}=y_{3}$
of $\mathfrak{psl}(3)$, the nondegenerate symmetric invariant bilinear
form $B_{\mathfrak{psl}(3)}:\mathfrak{psl}(3)\times\mathfrak{psl}(3)\rightarrow\mathbb{F}$
has the Gram matrix

\[
\mathscr{B}_{\mathfrak{psl}(3)}=\left(\begin{array}{ccc}
-1 & 0 & 0\\
0 & 0 & I_{2,1}\\
0 & I_{2,1} & 0
\end{array}\right),
\]
where $I_{r,s}=\mathrm{diag}(1,\cdot\cdot\cdot,1,-1,\cdot\cdot\cdot,-1)$
with $r$-many 1s and $s$-many (-1)s. The 3-mapping on $\mathfrak{psl}(3)$
is defined by
\[
e_{1}^{[3]}=e_{1},\:e_{i}^{[3]}=0,\:\mathrm{for}\;2\leq i\leq7.
\]

Let $\mathscr{D}$ be a derivation in one of the following forms
\[
\mathscr{D}_{1}=y_{1}\otimes\hat{x_{3}}+y_{3}\otimes\hat{x_{1}},
\]
\begin{equation}
\mathscr{D}_{2}=2x_{1}\otimes\hat{x_{2}}+y_{2}\otimes\hat{y_{1}},
\end{equation}
\[
\mathscr{D}_{3}=x_{1}\otimes\hat{x_{1}}+x_{3}\otimes\hat{x_{3}}+2y_{1}\otimes\hat{y_{1}}+2y_{3}\otimes\hat{y_{3}}.
\]
Then $\mathscr{D}$ is restricted with respect to the $3$-mapping
$[3]$ and the bilinear form $B_{\mathfrak{psl}(3)}$ is $\mathscr{D}$-invariant
for any $\mathscr{D}$ in (5.1).

Let $\alpha:\mathfrak{psl}(3)\rightarrow\mathfrak{psl}(3)$ be a map
defined as follows:
\[
\alpha(e_{i})=e_{i},\:i=1,4,7;\:\alpha(e_{j})=-e_{j},\:j=2,3,5,6.
\]
A direct compution shows that $\alpha\in\mathrm{End}_{s}(\mathfrak{psl}(3),B_{\mathfrak{psl}(3)})$
and $\alpha^{2}=\mathrm{id}$. According to Lemma 5.2, the quadruple
$\mathfrak{psl}(3)_{\alpha}=(\mathfrak{psl}(3),[\cdot,\cdot]_{\alpha},\alpha,B_{\alpha})$
is an involutive quadratic Hom-Lie algebra, where $[x,y]_{\alpha}=\alpha([x,y])$,
$B_{\alpha}(x,y)=B_{\mathfrak{psl}(3)}(\alpha(x),y)$, $\forall x,y\in\mathfrak{psl}(3)$.
Let $[3]_{\alpha}=[3]:\mathfrak{psl}(3)_{\alpha}\rightarrow\mathfrak{psl}(3)_{\alpha};x\mapsto x^{[3]_{\alpha}}$,
then $[3]_{\alpha}$ is a $3$-structure on $\mathfrak{psl}(3)_{\alpha}$.
$\mathscr{D}_{\alpha}:=\alpha\circ\mathscr{D}$ is a restricted derivation
with respect to $[3]_{\alpha}$.

Consider the $3$-mapping on double extensions of Lie algebra $\mathfrak{psl}(3)$.
The map $\mathscr{P}:\mathfrak{psl}(3)\rightarrow\mathbb{F}$ should
satisfy (for any $x,y\in\mathfrak{psl}(3)$ and any $t\in\mathbb{F}$):

\begin{align*}
\mathscr{P}(tx) & =t^{3}\mathscr{P}(x),\\
\mathscr{P}(x+y) & =\mathscr{P}(x)+\mathscr{P}(y)+B_{\mathfrak{psl}(3)}(\mathscr{D}(y),[y,x])+\frac{1}{2}B_{\mathfrak{psl}(3)}(\mathscr{D}(x),[y,x]).
\end{align*}
On the other hand, consider the $3$-structure on double extensions
of Hom-Lie algebra $\mathfrak{psl}(3)_{\alpha}$. The map $\mathscr{P}:\mathfrak{psl}(3)_{\alpha}\rightarrow\mathbb{F}$
should satisfy (for any $x,y\in\mathfrak{psl}(3)_{\alpha}$ and any
$t\in\mathbb{F}$):

\begin{align*}
\mathscr{P}(tx) & =t^{3}\mathscr{P}(x),\\
\mathscr{P}(x+y) & =\mathscr{P}(x)+\mathscr{P}(y)+B_{\alpha}(\mathscr{D}_{\alpha}(\alpha(y)),[y,x]_{\alpha})+\frac{1}{2}B_{\alpha}(\mathscr{D}_{\alpha}(\alpha(x)),[y,x]_{\alpha})\\
 & =\mathscr{P}(x)+\mathscr{P}(y)+B_{\mathfrak{psl}(3)}(\mathscr{D}(y),[y,x])+\frac{1}{2}B_{\mathfrak{psl}(3)}(\mathscr{D}(x),[y,x]).
\end{align*}
Therefore, the conditions for the map $\mathscr{P}$ are the same
in these two cases.

Let $x_{0}=0$ and the table for double extensions of $\mathfrak{psl}(3)_{\alpha}$
is as follows:
\begin{center}
\begin{tabular}{|c|c|c|c|c|}
\hline
Derivation & {\footnotesize{}$\mathscr{P}(x)$ $(x=\sum_{i=1}^{7}\lambda_{i}e_{i})$} & $\xi$ & $a_{0}$ & Double Extension\tabularnewline
\hline
\hline
$\mathscr{D}_{1}$ & $\lambda_{3}^{2}\lambda_{7}+2\lambda_{1}\lambda_{5}\lambda_{3}+\lambda_{4}\lambda{}_{5}^{2}$ & 0 & 0 & {\footnotesize{}$\mathfrak{\widetilde{gl}}(3)_{\alpha}$}\tabularnewline
\hline
$\mathscr{D}_{2}$ & $\lambda{}_{4}^{2}\lambda_{6}+2\lambda_{1}\lambda_{2}\lambda_{4}+2\lambda_{2}^{2}\lambda_{3}$ & 0 & 0 & {\footnotesize{}$\mathfrak{\hat{gl}}(3)_{\alpha}$}\tabularnewline
\hline
$\mathscr{D}_{3}$ & $\lambda_{1}\lambda_{2}\lambda_{5}+\lambda_{4}\lambda_{6}\lambda_{5}+2\lambda_{2}\lambda_{3}\lambda_{7}+\lambda_{1}\lambda_{4}\lambda_{7}$ & 1 & 0 & {\footnotesize{}$\mathfrak{gl}(3)_{\alpha}$}\tabularnewline
\hline
\end{tabular}
\par\end{center}

\end{example}

\begin{remark} As we can see, Example 5.1 is trivial, that is, the
twist map $\alpha=\mathrm{id}$ in it and the  involutive quadratic
Hom-Lie algebra $\mathfrak{psl}(3)_{\alpha}$ in Example 5.4 is obtained
by twisting Lie algebra $\mathfrak{psl}(3)$. That is to say, neither
of the two examples directly give double extensions of an involutive
quadratic Hom-Lie algebra. That is because it's difficult to find
suitable outer derivations for a given restricted involutive quadratic
Hom-Lie algebra. Inspired by the corresponding research in Lie algebras,
it is known that the (restricted) cohomology theory of (restricted)
Hom-Lie algebras may solve the difficulty. In order to give more examples,
we will define cohomology structures on Hom-Lie algebras in prime
characteristic and restricted cohomology structures on restricted
Hom-Lie algebras in the following research.

\end{remark}

\end{document}